\newcommand\BibTeX{{\rmfamily B\kern-.05em \textsc{i\kern-.025em b}\kern-.08em
T\kern-.1667em\lower.7ex\hbox{E}\kern-.125emX}}
\pgfplotsset{compat=1.14}
\title{\bfseries Classification and image processing with a semi-discrete scheme for fidelity forced Allen--Cahn on graphs}
\author{Jeremy Budd$^1$, Yves van Gennip$^2$, Jonas Latz$^3$\\
\\
\small{$^{1,2}$Delft Institute of Applied Mathematics (DIAM)},\\
\small{Technische Universiteit Delft, }\small{Delft, The Netherlands.}\\
\small{$^3$Department of Applied Mathematics and Theoretical Physics (DAMTP)},\\ 
\small{University of Cambridge, Cambridge, United Kingdom.}\\
\\
   \small{ \textup{\texttt{$^1$j.m.budd-1@tudelft.nl \qquad $^2$y.vangennip@tudelft.nl \qquad $^3$jl2160@cam.ac.uk}}}\\
}
\date{} 
\numberwithin{equation}{section}
\newtheoremstyle{exampstyle}
  {4pt} 
  {4pt} 
  {\itshape} 
  {} 
  {\bfseries} 
  {.} 
  {.5em} 
  {} 
\theoremstyle{exampstyle}
\newtheorem{thm}{Theorem}[section]
\newtheorem{mydef}[thm]{Definition}
\newtheorem{example}[thm]{Example}
\newtheorem{prop}[thm]{Proposition}
\newtheorem{lem}[thm]{Lemma}
\newtheorem{cor}[thm]{Corollary}
\newtheorem*{nb}{Note}
\newcommand{\be}{\begin{equation}}
\newcommand{\ee}{\end{equation}}
\newcommand{\V}{\mathcal{V}}
\newcommand{\E}{\mathcal{E}}
\newcommand{\bigO}{\mathcal{O}}
\DeclareMathOperator*{\argmin}{argmin}
\DeclareMathOperator{\GL}{GL_{\mathit\varepsilon}}
\DeclareMathOperator{\fGL}{GL_{\mathit{\varepsilon,\mu},\mathit{\tilde f}}}
\DeclarePairedDelimiter\ceil{\lceil}{\rceil}
\DeclarePairedDelimiter\ip{\langle}{\rangle_\V}
\def\@cite#1#2{{\normalfont[{#1\if@tempswa , #2\fi}]}}
\begin{document}
\maketitle

\begin{abstract}
This paper introduces a semi-discrete implicit Euler (SDIE) scheme for the Allen--Cahn equation (ACE) with fidelity forcing on graphs. Bertozzi and Flenner (2012) pioneered the use of this differential equation as a method for graph classification problems, such as semi-supervised learning and image segmentation. In Merkurjev, Kosti\'c, and Bertozzi (2013), a Merriman--Bence--Osher (MBO) scheme with fidelity forcing was used instead, as the MBO scheme is heuristically similar to the ACE. This paper rigorously establishes the graph MBO scheme with fidelity forcing as a special case of an SDIE scheme for the graph ACE with fidelity forcing. This connection requires using the double-obstacle potential in the ACE, as was shown in Budd and Van Gennip (2020) for ACE without fidelity forcing. We also prove that solutions of the SDIE scheme converge to solutions of the graph ACE with fidelity forcing as the SDIE time step tends to zero.

Next, we develop the SDIE scheme as a classification algorithm. We also introduce some innovations into the algorithms for the SDIE and MBO schemes. For large graphs, we use a QR decomposition method to compute an eigendecomposition from a Nystr\"om extension, which outperforms the method used in e.g. Bertozzi and Flenner (2012) in accuracy, stability, and speed. Moreover, we replace the Euler discretisation for the scheme's diffusion step by a computation based on the Strang formula for matrix exponentials. We apply this algorithm to a number of image segmentation problems, and compare the performance of the SDIE and MBO schemes. We find that whilst the general SDIE scheme does not perform better than the MBO special case at this task, our other innovations lead to a significantly better segmentation than that from previous literature. We also empirically quantify the uncertainty that this segmentation inherits from the randomness in the Nystr\"om extension.

{\bf 2010 AMS Classification.} 34B45, 35R02, 34A12, 65N12, 05C99.

{\bf Key words.} Allen–Cahn equation, fidelity constraint, threshold dynamics, graph dynamics, Strang formula, Nystr\"om extension.
\end{abstract}
\section{Introduction}
In this paper, we investigate the Allen--Cahn gradient flow of the Ginzburg--Landau functional on a graph, and the Merriman--Bence--Osher (MBO) scheme on a graph, with fidelity forcing. We extend  to the case of fidelity forcing the definition of the semi-discrete implicit Euler (SDIE) scheme  introduced in \cite{Budd} for the graph Allen--Cahn equation (ACE), and prove that the key results of \cite{Budd} hold true in the fidelity forced setting, i.e.
\begin{itemize}
\item the MBO scheme with fidelity forcing is a special case of the SDIE scheme with fidelity forcing; and
\item the SDIE solution converges to the solution of Allen--Cahn with fidelity forcing as the SDIE time step tends to zero. 
\end{itemize}

We then demonstrate how to employ the SDIE scheme as a classification algorithm, making a number of improvements upon the MBO-based classification in \cite{MKB}. In particular, we have developed a stable method for extracting an eigendecomposition or singular value decomposition (SVD) from the Nystr\"om extension \cite{Nys,FBCM} that is both faster and more accurate than the previous method used in \cite{MKB,BF}. Finally, we test the performance of this scheme as an alternative to graph MBO as a method for image processing on the ``two cows'' segmentation task considered in \cite{MKB,BF}.

Given an edge-weighted graph, the goal of two-class graph classification is to partition the vertex set into two subsets in such a way that the total weight of edges within each subset is high and the weight of edges between the two subsets is low. Classification differs from clustering by the addition of some \emph{a priori} knowledge, i.e. for certain vertices the correct classification is known beforehand. Graph classification has many applications, such as semi-supervised learning and image segmentation \cite{BF,birdspot}.

All programming for this paper was done in \textsc{Matlab}R2019a. Except within algorithm environments and URLs, all uses of \texttt{typewriter font} indicate in-built \textsc{Matlab} functions. 
\subsection{Contributions of this work}
In this paper we have:
\begin{itemize}
\item Defined a double-obstacle ACE with fidelity forcing (Definition \ref{ACEdef}), and extended the theory of \cite{Budd} to this equation (Theorem \ref{fACEthm}).
\item Defined an SDIE scheme for this ACE (Definition \ref{fSDdef}) and following \cite{Budd} proved that this scheme is a generalisation of the fidelity forced MBO scheme (Theorem \ref{obsMMthm}), derived a Lyapunov functional for the SDIE scheme (Theorem \ref{fLyapthm}), and proved that the scheme converges to the ACE solution as the time-step tends to zero (Theorem \ref{SDlimit}).
\item Described how to employ the SDIE scheme as a generalisation of the MBO-based classification algorithm in \cite{MKB}. 
\item Developed a method,  inspired by \cite{BK}, using the QR decomposition to extract an approximate SVD of the normalised graph Laplacian from the Nystr\"om extension (Algorithm \ref{nysQR}), which avoids the potential for errors in the method from \cite{MKB,BF} that can arise from taking the square root of a non-positive-semi-definite matrix, and empirically produces much better performance than the \cite{MKB,BF} method (Fig. \ref{Fig_Timings_Error_LR}) in  accuracy, stability, and speed.
\item Developed a method using the quadratic error Strang formula for matrix exponentials \cite{Strang} for computing fidelity forced graph diffusion (Algorithm \ref{SDalg}), which empirically incurs a lower error than the error incurred by the semi-implicit Euler method used in \cite{MKB} (Fig. \ref{LPFfig}), and explored other techniques with the potential to further reduce error (Table \ref{btable}).
\item Demonstrated the application of these algorithms to image segmentation, particularly the ``two cows'' images from \cite{MKB,BF}, compared the quality of the segmentation to those produced in \cite{MKB,BF} (Fig. \ref{fig:usvsBFMKB}), and investigated the uncertainty in these segmentations (Fig. \ref{Fig_MonteCarlo}), which is inherited from the randomisation in Nystr\"om.
\end{itemize}

This work extends the work in \cite{Budd} in four key ways. Firstly, introducing fidelity forcing changes the character of the dynamics, e.g. making graph diffusion affine, which changes a number of results/proofs, and it is thus of interest that the SDIE link continues to hold between the MBO scheme and the ACE. Secondly, this work for the first time considers the SDIE scheme as a tool for applications. Thirdly, in developing the scheme for applications we have made a number of improvements to the methods used in the previous literature \cite{MKB} for MBO-based classification, which result in a better segmentation of the ``two cows'' image than that produced in \cite{MKB} or \cite{BF}. Fourthly, we quantify the randomness that the segmentation inherits from the Nystr\"om extension.

\subsection{Background}

In the continuum, a major class of techniques for classification problems relies upon the minimisation of total variation (TV), e.g. the famous Mumford--Shah \cite{MS} and Chan--Vese \cite{CV} algorithms. These methods are linked to Ginzburg--Landau methods by the fact that the Ginzburg--Landau functional $\Gamma$-converges to TV \cite{MM,KS} (a result that continues to hold in the graph context \cite{vGB}). This motivated a common technique of minimising the Ginzburg--Landau functional in place of TV, e.g. in \cite{ET} two-class Chan--Vese segmentation was implemented by replacing TV with the Ginzburg--Landau functional; the resulting energy was minimised by using a fidelity forced MBO scheme.

Inspired by this continuum work, in \cite{BF} a method for graph classification was introduced based on minimising the Ginzburg--Landau functional on a graph by evolving the graph Allen--Cahn equation (ACE). The \emph{a priori} information was incorporated by including a fidelity forcing term, leading to the equation
\[
\frac{du}{dt} = -\Delta u - \frac1\varepsilon W'\circ u - \hat\mu P_Z (u-\tilde f),
\]
where $u$ is a labelling function which, due to the influence of a double-well potential (e.g. $W(x) = x^2 (x-1)^2$) will take values close to $0$ and $1$, indicating the two classes. The \emph{a priori} knowledge is encoded in the reference $\tilde f$ which is supported on $Z$, a subset of the node set with corresponding projection operator $P_Z$. In the first term $\Delta$ denotes the graph Laplacian and $\varepsilon, \hat\mu>0$ are parameters. All these ingredients will be explained in more detail in Sections~\ref{Gwork} and~\ref{ACEsec}.

In \cite{MKB} an alternative method was introduced: a graph Merriman--Bence--Osher (MBO) scheme with fidelity forcing. The original MBO scheme, introduced in a continuum setting in \cite{MBO92} to approximate motion by mean curvature, is an iterative scheme consisting of diffusion alternated with a thresholding step. In \cite{MKB} this scheme was discretised for use on graphs and the fidelity forcing term $- M(u-\tilde f)$ (where $M$ is a diagonal non-negative matrix, see Section~\ref{ACEsec} for details) was added to the diffusion. Heuristically, this MBO scheme was expected to behave similarly to the graph ACE as the thresholding step resembles a ``hard'' version of the ``soft'' double-well potential nonlinearity in the ACE.

In \cite{Budd} it was shown that the graph MBO scheme {\it without} fidelity forcing could be obtained as a special case of a semi-discrete implicit Euler (SDIE) scheme for the ACE (without fidelity forcing), if the smooth double-well potential was replaced by the double-obstacle potential defined in \eqref{Wobs}, and that solutions to the SDIE scheme converge to the solution of the graph ACE as the time step converges to zero. This double-obstacle potential was studied for the continuum ACE in \cite{BE1991,BE1992,BE1993} and was used in the graph context in \cite{BKS2018}. In \cite{volumeBudd} a result similar to that obtained in \cite{Budd} was obtained for a mass-conserving graph MBO scheme. In this paper such a result will be established for the graph MBO scheme {\it with} fidelity forcing.

In \cite{vGGOB} it was shown that the graph MBO scheme {\it pins} (or {\it freezes}) when the diffusion time is chosen too small, meaning that a single iteration of the scheme will not introduce any change as the diffusion step will not have pushed the value at any node past the threshold. In \cite{Budd} it was argued that the SDIE scheme for graph ACE provides a relaxation of the MBO scheme: The hard threshold is replaced by a gradual threshold, which should allow for the use of smaller diffusion times without experiencing pinning. The current paper investigates what impact that has in practical problems. 

\subsection{Groundwork}\label{Gwork}
We briefly summarise the framework for analysis on graphs, following the summary in \cite{Budd} of the detailed presentation in \cite{vGGOB}. 
A graph $G = (V,E)$ will henceforth be defined to be a finite, simple, undirected, weighted, and connected graph without self-loops with vertex set $V$, edge set $E\subseteq V^2$, and weights $\{\omega_{ij}\}_{i,j\in V}$ with $\omega_{ij}\geq 0$, $\omega_{ij} = \omega_{ji}$, $\omega_{ii}=0$, and $\omega_{ij} > 0$ if and only if $ij\in E$. We define the following  function spaces on $G$  (where $X\subseteq \mathbb{R}$, and $T\subseteq\mathbb{R}$ an interval):
\begin{align*}
	&\V := \left\{ u: V\rightarrow\mathbb{R} \right\} , &\V_{X} := \left\{ u: V\rightarrow X \right\}&,  &\mathcal{E} := \left\{ \varphi: E\rightarrow\mathbb{R}
\right\}.&\\
	&\V_{t\in T} := \left\{ u: T\rightarrow\V \right\} , &\V_{X,t\in T} := \left\{ u: T\rightarrow \V_X \right\}.&
\end{align*}
Defining $d_i := \sum_{j\in V} \omega_{ij}$ to be the \emph{degree} of vertex $i\in V$, we define inner products on $\V$ (or $\V_X$) and $\E$ (where $r\in [0,1]$):
\begin{align*}
	&\ip{u,v} := \sum_{i\in V} u_i v_i d_i^r, &\langle\varphi,\phi\rangle_\mathcal{E}:=\frac{1}{2}\sum_{i,j\in V} \varphi_{ij} \phi_{ij}\omega_{ij},&
\end{align*} and define the inner product on $\V_{t\in T}$ (or $\V_{X,t\in T}$) \[ (u,v)_{t\in T}:=\int_T \left\langle u(t),v(t)\right\rangle_\V \;dt = \sum_{i\in V} d_i^r\, (u_i,v_i)_{L^2(T;\mathbb{R})}.\] 
These induce inner product norms $||\cdot||_\V$, $||\cdot||_\mathcal{E}$, and $||\cdot||_{t\in T}$. We also define on $\V$ the norm \[ ||u||_\infty := \max_{i\in V} |u_i|.\]
Next, we define the $L^2$
space: \begin{align*} &L^2(T;\V) : =\left\{ u\in\V_{t\in T} \mid ||u||_{t\in T} <\infty \right\}, 
\end{align*}
and, for $T$ an open interval, we define the {Sobolev space} $H^1(T;\V)$ as the set of $u \in L^2(T;\V)$ with weak derivative $du/dt \in L^2(T;\V)$ defined by
\[  \forall \varphi\in C^\infty_c(T;\V)\:\:\left(u,\frac{d\varphi}{dt}\right)_{t\in T} = -\left(\frac{du}{dt},\varphi\right)_{t\in T} \] 
where $C^\infty_c(T;\V)$ is the set of $\varphi \in \V_{t\in T}$ that are infinitely differentiable with respect to time $t\in T$ and are compactly supported in $T$. By \cite[Proposition 2.1]{Budd},
$u\in H^1(T;\V)$ if and only if $u_i \in H^1(T;\mathbb{R})$ for each $i\in V$. 
We define the local $H^1$ space on any interval $T$ (and likewise define the local $L^2$ space $L^2_{loc}(T;\V)$): \[ H^1_{loc}(T;\V) :=\left\{u\in \V_{t\in T}\,\middle|\,\forall a,b\in T, \: u\in H^1((a,b);\V) \right\}.\] 

For $A\subseteq V$, we define the \emph{characteristic function} of $A$, $\chi_A\in\V$, by
\[ (\chi_A)_i := \begin{cases} 1, & \text{if }i\in A,\\
 0, & \text{if }i\notin A.\end{cases}\]
Next, we introduce the graph gradient and Laplacian:
\begin{align*}
	&(\nabla u)_{ij}:=\begin{cases}u_j -u_i, & ij\in E,\\ 0, &\text{otherwise,} \end{cases} &(\Delta u)_i:=d_i^{-r}\sum_{j\in V}\omega_{ij}(u_i-u_j).&
\end{align*}
Note that $\Delta$ is positive semi-definite and self-adjoint with respect to $\V$. As shown in \cite{vGGOB}, these operators are related via:
\[
\ip{u,\Delta v} = \langle \nabla u, \nabla v \rangle_\mathcal{E}.
\]
We can interpret $\Delta$ as a matrix. Define $D := \operatorname{diag}(d)$ (i.e. $D_{ii}:=d_i$, and $D_{ij}:=0$ otherwise) to be the \emph{diagonal matrix of degrees}. Then writing $\omega$ for the matrix of weights $\omega_{ij}$ we get 
\[
\Delta:= D^{-r}(D-\omega).
\]
 From $\Delta$ we define the \emph{graph diffusion operator}: \[e^{-t\Delta}u:=\sum_{n\geq 0} \frac{(-1)^n t^n}{n!}\Delta^n u\] where $v(t)=e^{-t\Delta}u $ is the unique solution to ${dv}/{dt} = -\Delta v$ with $v(0) = u$. Note that $e^{-t\Delta}\mathbf{1} = \mathbf{1}$, where $\mathbf{1}$ is the vector of ones.
	By \cite[Proposition 2.2]{Budd} if $u\in H^1(T;\V)$ and $T$ is bounded below, then $e^{-t\Delta}u\in H^1(T;\V)$ with \[\frac{d}{dt}\left(e^{-t\Delta}u\right) = e^{-t\Delta}\frac{du}{dt} -  e^{-t\Delta}\Delta u.\] 
We recall from functional analysis the notation, for any linear operator $F:\V\rightarrow\V$,
\begin{align*} &\sigma(F):=\{\lambda :\text{$\lambda$ an eigenvalue of $F$}\}\\
&\rho(F):=\max\{|\lambda| : \lambda \in \sigma(F)\}\\
&||F|| := \sup_{||u||_\V = 1} ||Fu||_\V
\end{align*}
and recall the standard result that if $F$ is self-adjoint then $||F|| = \rho(F)$.

Finally, we recall some notation from \cite{Budd}: for problems of the form ${\argmin}_x\: f(x)$ we write $f \simeq g$ and say $f$ and $g$ are \emph{equivalent} when $g(x) = af(x) + b$ for $a>0$ and $b$ independent of $x$. As a result, replacing $f$ by $g$ does not affect the minimisers.

Lastly, we define the non-fidelity-forced versions of the graph MBO scheme, the graph ACE and the SDIE scheme.  

The MBO scheme is an iterative, two-step process, originally developed in \cite{MBO92} to approximate motion by mean curvature. On a graph, it is defined in \cite{MKB} by the following iteration: for $u_n\in \V_{\{0,1\}}$, and $\tau >0$ the \emph{time step},
\begin{enumerate}
\item $v_n:= e^{-\tau\Delta}u_n$, i.e. the diffused state of $u_n$ after a time $\tau$.
\item $(u_{n+1})_i = \begin{cases} 1, &\text{ if }(v_n)_i \geq 1/2, \\ 0, &\text{ if } (v_n)_i < 1/2. \end{cases}$
\end{enumerate}

 To define the \emph{graph Allen\textendash Cahn equation} (\emph{ACE}), we first define the \emph{graph Ginzburg\textendash Landau functional} as in {\cite{Budd}} by 
\begin{equation*}
	\label{GL}
	\GL(u) := \frac{1}{2}\left|\left|\nabla u\right|\right|_\mathcal{E}^2 +\frac{1}{\varepsilon}\left\langle W\circ u,\mathbf{1} \right \rangle_\V
\end{equation*}
where $W$ is a double-well potential and $\varepsilon>0$ is a scaling parameter. Then the ACE results from taking the $\ip{\cdot,\cdot}$ gradient flow of $\GL$, which for $W$ differentiable is given by the ODE (where $\nabla_\V$ is the Hilbert space gradient on $\V$):
\begin{equation*}
	\label{AC}
	\frac{du}{dt} = -\nabla_\V\GL(u) = -\Delta u - \frac{1}{\varepsilon} W'\circ u .
\end{equation*}

To facilitate the SDIE link from \cite{Budd} between the ACE and the MBO scheme, we will henceforth take $W$ to be defined as:
\begin{equation}
	\label{Wobs}
	W(x) := \begin{cases}
    \frac{1}{2}x(1-x), & \text{for } 0 \leq x \leq 1, \\
    \infty, & \text{otherwise,}  \end{cases}
\end{equation}
the \emph{double-obstacle potential} studied by Blowey and Elliott \cite{BE1991,BE1992,BE1993} in the continuum and Bosch, Klamt, and Stoll \cite{BKS2018} on graphs.
As $W$ is not differentiable, we redefine the ACE via the subdifferential of $W$. As in \cite{Budd} we say that a pair $(u,\beta)\in\V_{[0,1],t\in T}\times\V_{t\in T}$ is a solution to the double-obstacle ACE on an interval $T$ if $u\in H_{loc}^1(T;\V)$ and for a.e. $t\in T$ 
\begin{align*}
\label{ACobs2}
	&\varepsilon \frac{du}{dt}(t) + \varepsilon\Delta u(t)  +\frac{1}{2}\mathbf{1}-u(t)= \beta(t), &\beta(t)\in\mathcal{B}(u(t))
\end{align*}
where $\mathcal{B}(u)$ is the set (for $I_{[0,1]}(x):=0$ if $x\in[0,1]$ and $I_{[0,1]}(x):=\infty$ otherwise)
\begin{equation}
\label{oldbeta}
	 \mathcal{B}(u) :=\left\{  \alpha\in\V\: \middle|\: \forall i\in V,  \alpha_i\in -\partial I_{[0,1]}(u_i)
	\right\}.
\end{equation}
That is, $\mathcal{B}(u)=\emptyset$ if $u\notin\V_{[0,1]}$, and for $u\in\V_{[0,1]}$ it is the set of $\beta\in\V$ such that 
\[\beta_i\in\begin{cases}
		[0,\infty), & u_i=0,\\
		\{0\}, &0<u_i < 1,\\
		(-\infty,0], & u_i=1.
	\end{cases}\]

Finally, the SDIE scheme for the graph ACE is defined in \cite{Budd} by the formula
\[ 	u_{n+1}=e^{-\tau\Delta} u_n- \frac{\tau}{\varepsilon} W'\circ u_{n+1} \]
or more accurately, given the above detail with the subdifferential, 
\begin{equation*}
\label{SDobs}
(1-\lambda)u_{n+1}-e^{-\tau\Delta}u_n+\frac{\lambda}{2}\mathbf{1} =\lambda\beta_{n+1}
\end{equation*}
where $\lambda:=\tau/\varepsilon$ and $\beta_{n+1}\in\mathcal{B}(u_{n+1})$. The key results of \cite{Budd} are then that:
\begin{itemize}
\item When $\tau = \varepsilon$, this scheme is exactly the MBO scheme. 
\item For $\varepsilon$ fixed and $\tau\downarrow 0$, this scheme converges to the solution of the double-obstacle ACE (which is a well-posed ODE).
\end{itemize}

\subsection{Paper outline}
The paper is structured as follows. In Section~\ref{Gwork} we introduced important concepts and notation for the rest of the paper. 
Section~\ref{ACEsec} contains the main theoretical results of this paper. It defines the graph MBO scheme with fidelity forcing, the graph ACE with fidelity forcing, and the SDIE scheme for graph ACE with fidelity forcing. It proves well-posedness for the graph ACE with fidelity forcing and establishes the rigorous link between a particular SDIE scheme and the graph MBO with fidelity forcing. Moreover, it introduces a Lypunov functional for the SDIE scheme with fidelity forcing and proves convergence of solutions of the SDIE schemes to the solution of the graph ACE with fidelity forcing. 
In Section~\ref{classificationsec} we explain how the SDIE schemes can be used for graph classification. In particular, the modifications to the existing MBO-based classification algorithms based on the QR decomposition and Strang formula are introduced. 
Section~\ref{applicationsec} presents a comparison of the SDIE and MBO scheme for an image segmentation applications, and an investigation into the uncertainty in these segmentations.
In Appendix~\ref{MKBapp} it is shown that the application of the Euler method used in \cite{MKB} can be seen as an approximation of the Lie product formula. 

\section{The Allen--Cahn equation, the MBO scheme, and the SDIE scheme with fidelity forcing }\label{ACEsec}
\subsection{The MBO scheme with fidelity forcing}
Following \cite{MKB,ET}, we introduce fidelity forcing into the MBO scheme by first defining a fidelity forced diffusion.
\begin{mydef}[Fidelity forced graph diffusion]
For $u\in H^1_{loc}([0,\infty);\V)$ and $u_0\in \V$ we define fidelity forced diffusion to be\emph{:}
\begin{align}\label{fdiffuse}
&\frac{du}{dt}(t) = -\Delta u(t)-M(u(t)-\tilde f)) =:-Au(t) +M\tilde f, &u(0) = u_0,
\end{align}
where $M:=\operatorname{diag}(\mu)$ for $\mu\in\V_{[0,\infty)}\setminus\{\mathbf{0}\}$ the \emph{fidelity parameter}, $A:=\Delta + M$, and $\tilde f \in \V_{[0,1]}$ is the \emph{reference}.  We define $Z:=\operatorname{supp}(\mu)\neq\emptyset$, which is the \emph{reference data} we enforce fidelity on. Note that $\mu_i$ paramaterises the strength of the fidelity to the reference at vertex $i$.
For the purposes of this section we shall treat $\mu$ and $\tilde f$ \emph{(}and therefore $M$ and $Z$\emph{)} as fixed and given. Moreover, since $\tilde f$ only ever appears in the presence of $M$, we define $f:=M\tilde f$ which is supported only on $Z$. Note that $f_i:=\mu_i\tilde f_i\in[0,\mu_i]$.
\end{mydef}
\begin{nb}
This fidelity term generalises slightly that used \emph{(}for ACE\emph{)} in \cite{BF}, in which $\mu := \hat \mu \chi_Z$ for $\hat\mu>0$ a parameter \emph{(}i.e. fidelity was enforced with equal strength on each vertex of the reference data\emph{)}, and so $M=\hat\mu P_Z$ where $P_Z$ is the projection map\emph{:}
\[
(P_Zu)_i = \begin{cases} u_i, &\text{if }i\in Z,\\
0. &\text{if }i\notin Z. \end{cases}
\]  
This generalisation has practical relevance, for example if one's confidence in the accuracy of the reference was higher at some vertices of the reference data than at others, then due to the link between the value of the fidelity parameter and the statistical precision \emph{(}i.e. the inverse of the variance of the noise\emph{)} of the reference \emph{(}see \cite[Section~3.3]{GraphUQ} for details\emph{)} it might be advantageous for one to use a fidelity parameter that is non-constant on the reference data. 
\end{nb}
\begin{prop}\label{Aspec}
$A$ is invertible with $ \sigma(A)\subseteq (0, ||\Delta|| + ||\mu||_\infty ]$.
\end{prop}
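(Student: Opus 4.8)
The plan is to exploit the fact that $A = \Delta + M$ is self-adjoint and positive definite with respect to $\ip{\cdot,\cdot}$, which immediately delivers a real, strictly positive spectrum, and then to bound the largest eigenvalue by the operator norm. First I would record that $M = \operatorname{diag}(\mu)$ is self-adjoint with respect to $\ip{\cdot,\cdot}$, since $\ip{Mu,v} = \sum_{i\in V}\mu_i u_i v_i d_i^r = \ip{u,Mv}$; as $\Delta$ is self-adjoint by assumption, so is $A$. By the standard result recalled above, $\sigma(A)\subseteq\mathbb{R}$ and $||A|| = \rho(A)$.

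Next, for the lower bound I would examine the quadratic form. Using the identity $\ip{u,\Delta u} = \langle\nabla u,\nabla u\rangle_\mathcal{E}$, I obtain
\[ \ip{u,Au} = ||\nabla u||_\mathcal{E}^2 + \sum_{i\in V}\mu_i u_i^2\, d_i^r \geq 0, \]
so $A$ is positive semi-definite. To upgrade this to strict positivity, suppose $\ip{u,Au}=0$. The first term forces $\nabla u = 0$, and since $G$ is connected this means $u$ is constant; the second term, together with $\mu\neq\mathbf{0}$ and $d_i^r>0$, forces the constant value to vanish at any vertex where $\mu_i>0$, hence $u=\mathbf{0}$. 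Thus $A$ is positive definite, every eigenvalue is strictly positive, and in particular $A$ is invertible with $\sigma(A)\subseteq(0,\infty)$.

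This connectivity-plus-$\mu\neq\mathbf{0}$ step is the only non-routine part of the argument, and it is exactly where the standing hypotheses on $G$ and on $\mu$ (equivalently $Z\neq\emptyset$) are consumed; everything else is bookkeeping. I expect no genuine obstacle here, but care is needed to spell out that positive degrees $d_i^r>0$ (again from connectedness) are what let me cancel and conclude $u_i = 0$ rather than merely a relation among the $u_i$.

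Finally, for the upper bound I would use subadditivity of the operator norm: $\rho(A) = ||A|| \leq ||\Delta|| + ||M||$. Since $M$ is self-adjoint its norm equals its spectral radius, and as $M\chi_{\{i\}} = \mu_i \chi_{\{i\}}$ the eigenvalues of $M$ are exactly $\{\mu_i\}_{i\in V}$, giving $||M|| = \rho(M) = \max_{i\in V}\mu_i = ||\mu||_\infty$ (using $\mu_i\geq 0$). Combining this with the strict positivity established above yields $\sigma(A)\subseteq(0,\,||\Delta|| + ||\mu||_\infty]$, as claimed.
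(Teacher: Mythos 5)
Your proof is correct and follows essentially the same route as the paper's: both establish strict positive definiteness of $A$ from $\ip{u,\Delta u}=||\nabla u||_\mathcal{E}^2$ plus connectedness of $G$ and $\mu\neq\mathbf{0}$, and both get the upper bound from self-adjointness and the triangle inequality $||A||\leq||\Delta||+||M||$. The only (immaterial) difference is that the paper organises the positivity argument via the decomposition $u=v+\alpha\mathbf{1}$ with $v\perp\mathbf{1}$ and a case split, whereas you show directly that the quadratic form has trivial kernel.
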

\begin{proof}
For the lower bound, we show that $A$ is strictly positive definite. Let $u \neq \mathbf{0}$ be written $u = v +\alpha\mathbf{1}$ for $v\bot\mathbf{1}$. Then 
\[
\ip{u,Au} =\ip{ v,\Delta v} + \ip{u,M u}
\]
and note that both terms on the right hand side are non-negative. Next, if $v\neq \mathbf{0}$ then   
\[
\ip{u,Au} \geq \ip{ v,\Delta v} = ||\nabla v||^2_\mathcal{E} > 0
\]
since $v\bot \mathbf{1}$ and hence $\nabla v \neq \mathbf{0}$, since $G$ is connected. Else, $v =\mathbf{0}$ so $\alpha \neq 0$ and 
\[
\ip{u,Au} = \alpha^2 \ip{\mathbf{1},\mu}>0.
\]
For the upper bound: $A$ is the sum of self-adjoint matrices, so is self-adjoint and hence has largest eigenvalue equal to $||A|| = ||\Delta + M|| \leq ||\Delta|| + ||M|| = ||\Delta|| + ||\mu||_\infty$. 
\end{proof}
\begin{thm} \label{fdiffusethm}
For given $u_0\in\V$, \eqref{fdiffuse} has a unique solution in $H^1_{loc}([0,\infty);\V)$. The solution $u$ to \eqref{fdiffuse} is $C^1((0,\infty);\V)$ and is given by the map \emph{(}where $I$ denotes the identity matrix\emph{):}
\be\label{fdiffusesoln}\begin{split}
u(t) = \mathcal{S}_t u_0 
							   &:=e^{-tA}u_0 + A^{-1}(I - e^{-tA})f.
\end{split}\ee
This solution map has the following properties\emph{:} 
\begin{enumerate}[i.]
\item If $u_0 \leq v_0$ vertexwise, then for all $t\geq 0$, $\mathcal{S}_tu_0 \leq \mathcal{S}_tv_0$ vertexwise.
\item $\mathcal{S}_t:\V_{[0,1]} \rightarrow \V_{[0,1]}$ for all $t\geq 0$, i.e. if $u_0\in\V_{[0,1]}$ then $u(t)\in\V_{[0,1]}$.
\end{enumerate}
\end{thm}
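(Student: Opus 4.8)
The plan is to treat \eqref{fdiffuse} as an inhomogeneous linear ODE on the finite-dimensional space $\V$, for which the classical theory of linear systems applies directly. First I would establish existence, uniqueness, and the closed form \eqref{fdiffusesoln}. Since $A$ is a fixed operator and $f=M\tilde f$ a fixed vector, Duhamel's formula predicts the solution $e^{-tA}u_0 + \int_0^t e^{-(t-s)A}f\,ds$; evaluating the integral as $\big(\int_0^t e^{-sA}\,ds\big)f = A^{-1}(I-e^{-tA})f$, which is legitimate because $A$ is invertible by Proposition~\ref{Aspec}, recovers exactly $\mathcal{S}_t u_0$. I would then verify directly that this formula satisfies the ODE and the initial condition by differentiating, which simultaneously shows $u\in C^\infty((0,\infty);\V)$ (hence $C^1$) and $u\in H^1_{loc}([0,\infty);\V)$, since $\mathcal{S}_t u_0$ is smooth on every bounded interval. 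For uniqueness in $H^1_{loc}$, given two solutions their difference $w$ satisfies $dw/dt=-Aw$ weakly with $w(0)=\mathbf{0}$; membership in $H^1_{loc}$ forces $w$ to be continuous, so the equation renders $dw/dt$ continuous and $w$ classical, and the energy estimate $\tfrac{d}{dt}\|w\|_\V^2 = -2\ip{w,Aw}\le 0$ together with the strict positive-definiteness of $A$ from Proposition~\ref{Aspec} gives $w\equiv\mathbf{0}$.

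For property (i), the key observation is that the affine term of \eqref{fdiffusesoln} is independent of the initial datum, so $\mathcal{S}_t v_0 - \mathcal{S}_t u_0 = e^{-tA}(v_0-u_0)$. The claim therefore reduces to showing that $e^{-tA}$ is order-preserving, i.e.\ entrywise nonnegative for $t\ge 0$, and this is where the structure of $A$ enters and is the main obstacle. I would read off from $A=\Delta+M=D^{-r}(D-\omega)+M$ that every off-diagonal entry of $-A$ equals $d_i^{-r}\omega_{ij}\ge 0$ (the diagonal $M$ contributing nothing off-diagonal), so $-A$ is essentially nonnegative. Choosing $c\ge\max_i A_{ii}$, the matrix $cI-A$ then has all entries nonnegative, whence $e^{-tA}=e^{-ct}e^{t(cI-A)}$ is entrywise nonnegative, being a positive scalar times a convergent series of products of entrywise-nonnegative matrices. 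Consequently $v_0\ge u_0$ vertexwise implies $e^{-tA}(v_0-u_0)\ge\mathbf{0}$, which is the assertion.

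For property (ii), I would combine (i) with comparison against the constant states $\mathbf{0}$ and $\mathbf{1}$. Since $\int_0^t e^{-sA}\,ds = A^{-1}(I-e^{-tA})$ is entrywise nonnegative (being an integral of the nonnegative $e^{-sA}$) and $f=M\tilde f\ge\mathbf{0}$, we obtain $\mathcal{S}_t\mathbf{0}=A^{-1}(I-e^{-tA})f\ge\mathbf{0}$. For the upper bound I would use that $\mathbf{1}$ acts as a supersolution: because $\Delta\mathbf{1}=\mathbf{0}$ we have $A\mathbf{1}=\mu$, and $\tilde f\le\mathbf{1}$ gives $f\le\mu=A\mathbf{1}$; applying the order-preserving map $A^{-1}(I-e^{-tA})$ to this inequality and using the identity $A^{-1}(I-e^{-tA})A=I-e^{-tA}$ yields $\mathcal{S}_t\mathbf{1}=e^{-tA}\mathbf{1}+A^{-1}(I-e^{-tA})f\le e^{-tA}\mathbf{1}+(I-e^{-tA})\mathbf{1}=\mathbf{1}$. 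Finally, for any $u_0\in\V_{[0,1]}$ we have $\mathbf{0}\le u_0\le\mathbf{1}$, so property (i) sandwiches $\mathbf{0}\le\mathcal{S}_t\mathbf{0}\le\mathcal{S}_t u_0\le\mathcal{S}_t\mathbf{1}\le\mathbf{1}$, giving $\mathcal{S}_t u_0\in\V_{[0,1]}$ as required.
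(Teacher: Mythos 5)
Your proposal is correct. The closed form, existence, uniqueness, and property (i) follow essentially the paper's own route: the paper likewise verifies \eqref{fdiffusesoln} directly (citing Picard--Lindel\"of for uniqueness where you give an energy estimate --- both standard), and proves (i) by exactly your essentially-nonnegative-matrix argument, writing $Q := aI - tA$ with nonnegative entries and $e^{-tA} = e^{-a}e^{Q}$. Where you genuinely diverge is property (ii). The paper argues by contradiction via a minimum principle: if $u_i(t) < 0$ somewhere, a spatio-temporal minimiser $(i^*,t^*)$ with $t^*>0$ must have vanishing time derivative, yet the equation forces $du_{i^*}/dt(t^*) > 0$ at a suitably chosen minimiser --- and locating that minimiser requires propagating along neighbours until one reaches the reference set $Z$, which explicitly uses the connectedness of $G$. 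You instead sandwich $\mathcal{S}_t u_0$ between $\mathcal{S}_t\mathbf{0}$ and $\mathcal{S}_t\mathbf{1}$ using (i) and check the two constant barriers by hand: $\mathcal{S}_t\mathbf{0} = \bigl(\int_0^t e^{-sA}\,ds\bigr) f \ge \mathbf{0}$ since $f\ge\mathbf{0}$, and $f \le \mu = A\mathbf{1}$ combined with the entrywise nonnegativity of $A^{-1}(I-e^{-tA})$ gives $\mathcal{S}_t\mathbf{1} \le \mathbf{1}$. This is arguably cleaner: it recycles the positivity of $e^{-tA}$ already established for (i), avoids the case analysis at the minimiser, and does not invoke connectivity beyond what is already needed for $A^{-1}$ to exist in the statement of $\mathcal{S}_t$. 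Both arguments are complete; yours trades the PDE-style barrier/minimum-principle reasoning of the paper for a purely order-theoretic one.
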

\begin{proof} 
It is straightforward to check directly that \eqref{fdiffusesoln} satisfies \eqref{fdiffuse} and is $C^1$ on $(0,\infty)$. Uniqueness is given by a standard Picard--Lindel\"of argument (see e.g. \cite[Corollary 2.6]{Teschl}).
\begin{enumerate}[i.]
\item By definition, $\mathcal{S}_t v_0 - \mathcal{S}_t u_0 = e^{-tA}(v_0-u_0)$. Thus it suffices to show that $e^{-tA}$ is a non-negative matrix for $t\geq 0$. Note that the off-diagonal elements of $-tA$ are non-negative: for $i\neq j$, $-tA_{ij}=-t\Delta_{ij} = td_i^r \omega_{ij}\geq 0$. Thus for some $a>0$, $Q:=aI-tA$ is a non-negative matrix and thus $e^Q$ is a non-negative matrix.  It follows that $e^{-tA}= e^{-a}e^Q$ is a non-negative matrix.
\item Let $u_0\in\V_{[0,1]}$ and recall that $\tilde f\in\V_{[0,1]}$. Suppose that for some $t>0$ and some $i\in V$, $u_i(t)<0$. Then \[ \min_{t'\in [0,t]} \min_{i\in V} u_i(t') < 0\] and since each $u_i$ is continuous this minimum is attained at some  $t^*\in [0,t]$ and $i^*\in V$. Fix such a $t^*$. Then for any $i^*$ minimising $u(t^*)$, since $ u_{i^*}(t^*) < 0$ we must have $t^* >0$, so $u_{i^*}$ is differentiable at $t^*$ with $du_{i^*}/dt(t^*) = 0$. However by \eqref{fdiffuse} 
\[ \frac{du_{i^*}}{dt}(t^*) = -(\Delta u(t^*))_{i^*} + \mu_{i^*} (\tilde f_{i^*} - u_{i^*}(t^*)).\]
We claim that we can choose a suitable minimiser $i^*$ such that this is strictly positive. First, since any such $i^*$ is a minimiser of $u(t^*)$, and $\tilde f_i \geq 0$ for all $i$, it follows that each term is non-negative. Next, suppose such an $i^*$ has a neighbour $j$ such that $u_j(t^*) > u_{i^*}(t^*)$, then it follows that $(\Delta u(t^*))_{i^*} <0$ and we have the claim. Otherwise, all the neighbours of that $i^*$ are also minimisers of $u(t^*)$. Repeating this same argument on each of those, we either have the claim for the above reason or we find a minimiser $i^*\in Z$, since $G$ is connected. But in that case $\mu_{i^*}(\tilde f_{i^*} - u_{i^*}(t^*))\geq -\mu_{i^*}u_{i^*}(t^*) > 0$, since $\mu$ is strictly positive on $Z$, and we again have the claim. Hence $du_{i^*}/dt(t^*)>0$, a contradiction. Therefore $u_i(t)\geq 0$ for all $t$. The case for $u_i(t) \leq 1$ is likewise.
\end{enumerate}
\end{proof}
\begin{mydef}[Graph MBO with fidelity forcing]
For $u_0\in \V_{[0,1]}$ we follow \emph{\cite{MKB,ET}}, and define the sequence of MBO iterates by diffusing with fidelity for a time $\tau\geq 0$ and then thresholding, i.e. 
\be
\label{fMBO}
(u_{n+1})_i = \begin{cases} 1, &\text{if }(\mathcal{S}_{\tau}u_n)_i \geq 1/2,\\
0, &\text{if }(\mathcal{S}_{\tau}u_n)_i < 1/2,  \end{cases}
\ee
where $\mathcal{S}_\tau$ is the solution map from \eqref{fdiffusesoln}. Note that \eqref{fMBO} has variational form similar to that given for graph MBO in \emph{\cite{vGGOB}}, which we can then re-write as in \emph{\cite{Budd}:} 
\be
\label{fMBOvar}
u_{n+1}\in \argmin_{u\in\V_{[0,1]}}\: \ip{\mathbf{1}-2\mathcal{S}_\tau u_n,u}\simeq  \frac{1}{2\tau}\ip{\mathbf{1}-u,u} + \frac{\left|\left|u-\mathcal{S}_\tau u_n\right|\right|^2_\V}{2\tau}. 
\ee
\end{mydef}
\subsection{The Allen--Cahn equation with fidelity forcing}
To derive the Allen--Cahn equation (ACE) with fidelity forcing, we re-define the Ginzburg--Landau energy to include a fidelity term (recalling the potential $W$ from \eqref{Wobs}):
\be 
\label{fGL}
\fGL(u) := \frac{1}{2}\left|\left|\nabla u\right|\right|_\mathcal{E}^2 +\frac{1}{\varepsilon}\left\langle W\circ u,\mathbf{1} \right \rangle_\V + \frac{1}{2}\ip{u-\tilde f, M(u-\tilde f)}.
\ee 
Taking the gradient flow of \eqref{fGL} we obtain the Allen--Cahn equation with fidelity:
\begin{align}
\label{fACE}
	&\varepsilon \frac{du}{dt}(t) + \varepsilon(\Delta u(t)+M(u(t)-\tilde f)) +\frac{1}{2}\mathbf{1}-u(t)= \beta(t), &\beta(t)\in\mathcal{B}(u(t)).
\end{align}
Where $\mathcal{B}(u(t))$ is defined as in \eqref{oldbeta}.
Recalling that $A:=\Delta + M$ and $f:=M\tilde f $, we can rewrite the ODE in \eqref{fACE} as 
\[
\varepsilon \frac{du}{dt}(t) + \varepsilon Au(t)-\varepsilon f +\frac{1}{2}\mathbf{1}-u(t)= \beta(t).
\]
As in \cite{Budd}, we can give an explicit expression for $\beta$ given sufficient regularity on $u$.
\begin{thm}\label{betathm}
Let $(u,\beta)$ obey \eqref{fACE} at a.e. $t\in T$, with $u\in H^1_{loc}(T;\V)\cap C^0(T;\V)\cap\V_{[0,1],t\in T}$. Then for all $i\in V$ and a.e. $t\in T$, \be\label{beta2} \beta_i(t) = \begin{cases} \frac{1}{2} +\varepsilon(\Delta u(t))_i-\varepsilon f_i,&u_i(t)=0,\\
0, & u_i(t)\in(0,1),\\
-\frac{1}{2} +\varepsilon(\Delta u(t))_i+\varepsilon(\mu_i-f_i), & u_i(t) =1.\end{cases}\ee
Hence at a.e. $t\in T$,
\[\beta(t)\in\V_{[-1/2,1/2]}.\]
\end{thm}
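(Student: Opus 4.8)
The plan is to argue vertexwise and to split into the three regimes of \eqref{beta2}. By \cite[Proposition 2.1]{Budd}, the hypothesis $u\in H^1_{loc}(T;\V)$ is equivalent to $u_i\in H^1_{loc}(T;\mathbb{R})$ for every $i\in V$, so each component is a scalar Sobolev function, differentiable at a.e.\ $t\in T$. The interior regime is then immediate: wherever $u_i(t)\in(0,1)$, the hypothesis $\beta(t)\in\mathcal{B}(u(t))$ together with the definition \eqref{oldbeta} forces $\beta_i(t)=0$ with no computation.

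For the two boundary regimes the key input is the standard fact that the weak derivative of a scalar Sobolev function vanishes a.e.\ on each of its level sets; applied to $u_i$ at the levels $0$ and $1$, this gives $\tfrac{du_i}{dt}(t)=0$ for a.e.\ $t$ with $u_i(t)\in\{0,1\}$. At such a $t$, where \eqref{fACE} additionally holds (still a.e.), I would read off \eqref{fACE} at vertex $i$, discard the vanishing time-derivative term, and substitute $u_i(t)=0$ or $u_i(t)=1$ together with $f_i=\mu_i\tilde f_i$. This one-line rearrangement yields exactly the top and bottom lines of \eqref{beta2}.

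For the bound $\beta(t)\in\V_{[-1/2,1/2]}$ I would pair these formulae with sign information. On $\{u_i=0\}$ the constraint \eqref{oldbeta} gives $\beta_i\geq 0$; since $u\in\V_{[0,1]}$ pointwise, $u_i=0\leq u_j$ for all $j$, so $(\Delta u)_i=d_i^{-r}\sum_{j}\omega_{ij}(u_i-u_j)\leq 0$, while $f_i\geq 0$, whence $\beta_i=\tfrac12+\varepsilon(\Delta u)_i-\varepsilon f_i\leq\tfrac12$. Symmetrically, on $\{u_i=1\}$ one has $\beta_i\leq 0$ from \eqref{oldbeta}, while $(\Delta u)_i\geq 0$ (as $u_i=1\geq u_j$) and $\mu_i-f_i=\mu_i(1-\tilde f_i)\geq 0$, so $\beta_i\geq-\tfrac12$; the interior value $0$ lies in $[-1/2,1/2]$ trivially. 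The main obstacle is the careful invocation of the level-set lemma and the verification that the several a.e.\ exceptional sets (nondifferentiability of $u_i$, failure of \eqref{fACE}, and the level-set null set) can be discarded together; once this is done, only elementary substitutions and sign checks remain.
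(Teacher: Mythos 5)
Your proof is correct and follows essentially the same route as the paper, which simply defers to \cite[Theorem 2.2]{Budd}: the argument there is precisely this vertexwise case split, using the standard fact that the weak derivative of a scalar $H^1$ function vanishes a.e.\ on its level sets to kill $\tfrac{du_i}{dt}$ where $u_i\in\{0,1\}$, then substituting into the ODE and combining the sign constraints from $\mathcal{B}(u)$ with the signs of $(\Delta u)_i$, $f_i$, and $\mu_i-f_i$ to get $\beta(t)\in\V_{[-1/2,1/2]}$. Your handling of the finitely many exceptional null sets is also fine, so nothing is missing.
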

\begin{proof} Follows as in \cite[Theorem 2.2]{Budd} \emph{mutatis mutandis}.
\end{proof}
Thus following \cite{Budd} we define the double-obstacle ACE with fidelity forcing. 
\begin{mydef}[Double-obstacle ACE with fidelity forcing]\label{ACEdef}
Let $T$ be an interval. Then a pair $(u,\beta)\in\V_{[0,1],t\in T}\times\V_{t\in T}$ is a solution to double-obstacle ACE with fidelity forcing on $T$ when $u\in H_{loc}^1(T;\V)\cap C^0(T;\V)$ and for almost every $t\in T$, 
\begin{align}
\label{fACE2}
	&\varepsilon \frac{du}{dt}(t) + \varepsilon Au(t)-\varepsilon f +\frac{1}{2}\mathbf{1}-u(t)= \beta(t),  &\beta(t)\in\mathcal{B}(u(t)).
\end{align}
We frequently will refer to just $u$ as a solution to \eqref{fACE2}, since $\beta$ is a.e. uniquely determined as a function of $u$ by \eqref{beta2}.
\end{mydef}

We now demonstrate that this has the same key properties, \emph{mutatis mutandis}, as the ACE in \cite{Budd}.

\begin{thm}\label{fACEthm}
Let $T=[0,T_0]$ or $[0,\infty)$. 
Then\emph{:}
\begin{enumerate}[\emph{(}a\emph{)}]
\item \emph{(}Existence\emph{)} For any given $u_0\in\V_{[0,1]}$, there exists a $(u,\beta)$ as in Definition \ref{ACEdef} with $u(0)=u_0$. 
\item \emph{(}Comparison principle\emph{)} If $(u,\beta),(v,\gamma)\in\V_{[0,1],t\in T}\times\V_{t\in T}$ with $u,v\in H_{loc}^1(T;\V)\cap C^0(T;\V)$ satisfy
\begin{align}
\label{fACEsuper}
	&\varepsilon \frac{du}{dt}(t) + \varepsilon Au(t)-\varepsilon f +\frac{1}{2}\mathbf{1}-u(t)\geq \beta(t),  &\beta(t)\in\mathcal{B}(u(t)),\\
\intertext{and}
\label{fACEsub}
	&\varepsilon \frac{dv}{dt}(t) + \varepsilon Av(t)-\varepsilon f +\frac{1}{2}\mathbf{1}-v(t)\leq \gamma(t),  &\gamma(t)\in\mathcal{B}(v(t)),
\end{align}
vertexwise at a.e. $t\in T$, and $v(0)\leq u(0)$ vertexwise, then $v(t) \leq u(t)$ vertexwise for all $t\in T$.
\item \emph{(}Uniqueness\emph{)} If $(u,\beta)$ and $(v,\gamma)$ are as in Definition \ref{ACEdef} with $u(0)=v(0)$ then $u(t)=v(t)$ for all $t\in T$ and $\beta(t)=\gamma(t)$ at a.e. $t\in T$.
\item \emph{(}Gradient flow\emph{)} For $u$ as in Definition \ref{ACEdef}, $\fGL(u(t))$ monotonically decreases.
\item \emph{(}Weak form\emph{)} $u\in\V_{[0,1],t\in T}\cap H^1_{loc}(T;\V)\cap C(T;\V)$ \emph{(}and associated $\beta(t)= \varepsilon \frac{du}{dt}(t) + \varepsilon Au(t)-\varepsilon f +\frac{1}{2}\mathbf{1}-u(t)$ a.e.\emph{)} is a solution to \eqref{fACE2} if and only if for almost every $t\in T$ and all $\eta\in\V_{[0,1]}$
\be\label{fACEweak}
\left\langle \varepsilon \frac{du}{dt} + \varepsilon Au(t)-\varepsilon f +\frac{1}{2}\mathbf{1}-u(t) ,\eta-u(t)\right\rangle_\V\geq 0.
\ee
\item \emph{(}Explicit form\emph{)} $(u,\beta)\in\V_{[0,1],t\in T}\times\V_{t\in T}$ satisfies Definition \ref{ACEdef} if and only if for a.e. $t\in T$, $\beta(t)\in\mathcal{B}(u(t))$, $\beta(t)\in\V_{[-1/2,1/2]}$ and \emph{(}for $B := A - \varepsilon^{-1}I$ and $\varepsilon^{-1}\notin\sigma(A)$\emph{):} 
\be
\label{fACEsoln}
u(t)=e^{-tB} u(0)+B^{-1}\left(I-e^{-tB} \right)\left(f-\frac{1}{2\varepsilon}\mathbf{1} \right)+ \frac{1}{\varepsilon}\int_0^t e^{-(t-s)B}\beta(s) \; ds .
\ee 
\item  \emph{(}Lipschitz regularity\emph{)} For $u$ as in Definition \ref{ACEdef}, if $\varepsilon^{-1}\notin\sigma(A)$, then $u\in C^{0,1}(T;\V)$.
\item \emph{(}Well-posedness\emph{)} Let $u_0,v_0\in\V_{[0,1]}$ define the ACE trajectories $ u, v$ as in Definition \ref{ACEdef}, and suppose
$\varepsilon^{-1}\notin\sigma(A)$. Then, for $\xi_1:=\min\sigma(A)$, 
\be\label{fACEstab2}
|| u(t)- v(t)||_\V \leq e^{-\xi_1 t}e^{t/\varepsilon}||u_0-v_0||_\V.
\ee
\end{enumerate}
\end{thm}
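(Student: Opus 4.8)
The entire theorem is engineered to reduce to the corresponding statements in \cite{Budd} once one verifies that replacing $\Delta$ by $A=\Delta+M$ and adding the forcing $\varepsilon f$ preserves every structural fact those proofs relied on. The facts I would isolate at the outset are: $A$ is self-adjoint and strictly positive definite with $\min\sigma(A)=\xi_1>0$ (Proposition \ref{Aspec}); $e^{-tA}$ is positivity preserving and $\mathcal{S}_t$ maps $\V_{[0,1]}$ into itself (Theorem \ref{fdiffusethm}); $\mathcal{B}=-\partial I_{[0,1]}$ is minus a maximal monotone operator, hence has a \emph{decreasing} graph; and the forcing $-\varepsilon f$ is common to any two copies of \eqref{fACE2} and so cancels in every difference estimate. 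The logical core is the comparison principle (b), with (c) an immediate corollary. To prove (b) I would subtract \eqref{fACEsuper} from \eqref{fACEsub}, set $w:=v-u\in H^1_{loc}$, obtain $\varepsilon w'+\varepsilon Aw-w\leq\gamma-\beta$ vertexwise a.e., and test against $w^+$ in $\langle\cdot,\cdot\rangle_\V$, using $\tfrac{d}{dt}\tfrac12\|w^+\|_\V^2=\ip{w',w^+}$. The three sign facts needed are $\ip{w,w^+}=\|w^+\|_\V^2$; the graph Kato-type inequality $\ip{Aw,w^+}\geq\|\nabla w^+\|_\E^2+\ip{w^+,Mw^+}\geq 0$, which follows from $(w_i^+-w_j^+)(w_i-w_j)\geq 0$ and $\mu\geq\mathbf 0$; and $\ip{\gamma-\beta,w^+}\leq 0$, because on $\{w_i>0\}$ one has $v_i>u_i$ and the decreasing graph of $-\partial I_{[0,1]}$ forces $\gamma_i\leq\beta_i$. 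Together these give $\tfrac{\varepsilon}{2}\tfrac{d}{dt}\|w^+\|_\V^2\leq\|w^+\|_\V^2$, and since $w^+(0)=\mathbf 0$, Gr\"onwall forces $w^+\equiv\mathbf 0$, i.e. $v\leq u$ for all $t$ by continuity. Running the argument both ways gives (c), and then $\beta=\gamma$ a.e. follows from the pointwise formula \eqref{beta2}.

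For existence (a) I would reproduce the \cite{Budd} construction with $A$ in place of $\Delta$. The flow \eqref{fACE2} is the subgradient flow of $\fGL$, and $\fGL$ is semiconvex: its gradient, fidelity, and obstacle terms are all convex, while the only nonconvex contribution — coming from the smooth part $\tfrac12 x(1-x)$ of $W$ — has bounded Hessian, so adding a suitable multiple of $\|u\|_\V^2$ convexifies it. The standard theory of gradient flows of semiconvex functionals then produces a solution, and invariance of the box $\V_{[0,1]}$ is inherited from the order/positivity properties in Theorem \ref{fdiffusethm} (equivalently, $\mathbf 0$ and $\mathbf 1$ serve as ordered sub- and super-solutions, consistent with (b)).

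The remaining parts are essentially reformulations. For the weak form (e): for $u(t)\in\V_{[0,1]}$ and because the weights $d_i^r$ are strictly positive, the inclusion $\beta(t)\in\mathcal{B}(u(t))$ is by definition equivalent to $\ip{\beta,\eta-u}\geq 0$ for all $\eta\in\V_{[0,1]}$, and substituting $\beta=\varepsilon u'+\varepsilon Au-\varepsilon f+\tfrac12\mathbf 1-u$ yields \eqref{fACEweak}. For the explicit form (f): rearrange \eqref{fACE2} to $u'=-Bu+\bigl(f-\tfrac1{2\varepsilon}\mathbf 1\bigr)+\tfrac1\varepsilon\beta$ with $B=A-\varepsilon^{-1}I$; when $\varepsilon^{-1}\notin\sigma(A)$ the matrix $B$ is invertible, and Duhamel's formula integrates the constant term to $B^{-1}(I-e^{-tB})(f-\tfrac1{2\varepsilon}\mathbf 1)$ and leaves the $\beta$-convolution, giving \eqref{fACEsoln}; the converse follows by differentiating, with the side conditions $\beta\in\mathcal{B}(u)\cap\V_{[-1/2,1/2]}$ supplied by Theorem \ref{betathm}. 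For Lipschitz regularity (g): since $u(t)\in\V_{[0,1]}$ is bounded and $\|\beta(t)\|_\infty\leq\tfrac12$ (Theorem \ref{betathm}), the right-hand side of the ODE is bounded in $\V$ on bounded time intervals, so $u\in C^{0,1}$. For the gradient-flow property (d): the Brezis chain rule along the subgradient flow gives $\tfrac{d}{dt}\fGL(u(t))=-\|u'(t)\|_\V^2\leq 0$.

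For well-posedness (h), let $w:=u-v$, so $\varepsilon w'+\varepsilon Aw-w=\beta-\gamma$; testing against $w$ and using $\ip{Aw,w}\geq\xi_1\|w\|_\V^2$ together with $\ip{\beta-\gamma,w}\leq 0$ (monotonicity of $-\partial I_{[0,1]}$) gives $\tfrac{\varepsilon}{2}\tfrac{d}{dt}\|w\|_\V^2\leq(1-\varepsilon\xi_1)\|w\|_\V^2$, and Gr\"onwall delivers \eqref{fACEstab2}, with the factor $e^{-\xi_1 t}$ coming from the coercivity of $A$ and $e^{t/\varepsilon}$ from the destabilising $-u$ term. I expect the only genuine obstacle to be the comparison principle (b): one must get the sign right in \emph{every} tested term, and the two places where the fidelity forcing could conceivably interfere — the Kato-type estimate for $A$ and the monotonicity of the contact-set reaction $\gamma-\beta$ — are precisely where Proposition \ref{Aspec} and Theorem \ref{fdiffusethm} are invoked. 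Everything downstream is then either algebraic, as in (e), (f), (h), or a standard gradient-flow fact, as in (a), (d), (g).
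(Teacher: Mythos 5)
Your proposal is correct, and for parts (b), (c), (e), (f) it essentially coincides with the paper's argument: the comparison principle is proved exactly as you describe (subtract, test against $w_+$, use $\ip{\Delta w,w_+}\geq\ip{\Delta w_+,w_+}$ plus the diagonality of $M$, then Gr\"onwall), uniqueness follows, the weak form is the standard subdifferential reformulation, and the explicit form is Duhamel. Where you genuinely diverge is in (a), (d), (g), (h). The paper does \emph{not} prove existence by abstract semiconvex gradient-flow theory: it constructs the solution as the $\tau\downarrow 0$ limit of the SDIE iterates (Theorem \ref{SDlimit}), obtains the energy decay (d) by passing to the limit in the discrete Lyapunov functional $H_\tau$ (Proposition \ref{Htauprop} and Theorem \ref{fGLthm}), and obtains the stability estimate (h) by passing to the limit in the discrete estimate \eqref{fSDstab} (Theorem \ref{fACEstab}); this is why the paper's proof of Theorem \ref{fACEthm} contains forward references and an explicit remark about avoiding circularity. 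Your alternatives are all sound: the Brezis chain rule applies since $\fGL$ plus $\tfrac{1}{\varepsilon}\|\cdot\|_\V^2$ is convex and $-u'\in\partial\fGL(u)$ a.e.; your direct Gr\"onwall for (h) needs only the monotonicity $\ip{\beta-\gamma,u-v}\leq 0$ of $-\partial I_{[0,1]}$ and the coercivity $\ip{Aw,w}\geq\xi_1\|w\|_\V^2$, both of which hold, and it reproduces \eqref{fACEstab2} exactly without requiring $\varepsilon^{-1}\notin\sigma(A)$; and your proof of (g) from the a.e.\ bound $\|u'(t)\|_\V\leq C$ (using $u\in\V_{[0,1]}$ and $\beta\in\V_{[-1/2,1/2]}$) is simpler than the paper's computation via the explicit form and likewise does not need the spectral hypothesis. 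What you lose relative to the paper is self-containedness (you lean on the abstract gradient-flow literature for existence, whereas the paper's SDIE construction is machinery it must build anyway for the convergence theorem) and the quantitative refinement \eqref{GLstep} of the energy decay, which falls out of the Lyapunov-functional route but not directly from the chain rule; what you gain is a shorter, more modular argument in which (a), (d), (g), (h) do not depend on the numerical scheme at all.
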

\begin{proof}
\begin{enumerate}[(a)]
\item We prove this as Theorem \ref{SDlimit}.
\item We follow the proof of \cite[Theorem B.2]{Budd}. Letting $w := v-u$ and subtracting \eqref{fACEsuper} from \eqref{fACEsub}, we have that 
\[
\varepsilon\frac{dw}{dt}(t) + \varepsilon A w(t) - w(t) \leq \gamma(t)-\beta(t)
\]
vertexwise at a.e. $t\in T$. Next, take the inner product with $w_+:=\operatorname{max}(w,0)$, the vertexwise positive part of $w$:
\[\varepsilon\left\langle\frac{dw}{dt}(t),w_+(t)\right\rangle_\V+\varepsilon\ip{A w(t),w_+(t)} -\ip{w(t),w_+(t)} \leq \ip{\gamma(t)-\beta(t),w_+(t)}. \] 
As in the proof of \cite[Theorem B.2]{Budd}, the $RHS\leq 0$. For the rest of the proof to go through as in that Theorem, it suffices to check that $\ip{A w(t),w_+(t)} \geq \ip{A w_+(t),w_+(t)} $. But by \cite[Proposition B.1]{Budd}, $\ip{\Delta w(t),w_{+}(t)} \geq \ip{\Delta w_{+}(t),w_{+}(t)}$, and it is clear that $\ip{Mw(t),w_{+}(t)} = \ip{M w_{+}(t),w_{+}(t)}$ since $M$ is diagonal and non-negative, so the proof follows as in \cite[Theorem B.2]{Budd}. 
\item Follows from (b): if $(u,\beta)$ and $(v,\gamma)$ have $u(0)=v(0)$ and both solve \eqref{fACE2}, then applying the comparison principle in both directions gives that $u(t)=v(t)$ at all $t\in T$. Then \eqref{beta2} gives that $\beta(t)=\gamma(t)$ at a.e. $t\in T$.
\item We prove this in Theorem \ref{fGLthm} for the solution given by Theorem \ref{SDlimit}, which by uniqueness is the general solution.
\item Let $u$ solve \eqref{fACE2}. Then for a.e. $t\in T$, $\beta(t)\in\mathcal{B}(u(t))$, and at such $t$ we have $\ip{\beta(t),\eta-u(t)}\geq 0$ for all $\eta\in\V_{[0,1]}$ as in the proof of \cite[Theorem 3.8]{Budd}, so $u$ satisfies \eqref{fACEweak}. Next, if $u$ satisfies \eqref{fACEweak} at $t\in T$, then as in the proof of \cite[Theorem 3.8]{Budd}, $\beta(t):= \varepsilon \frac{du}{dt}(t) + \varepsilon Au(t)-\varepsilon f +\frac{1}{2}\mathbf{1}-u(t)\in\mathcal{B}(u(t))$, as desired.
\item Let $(u,\beta)\in\V_{[0,1],t\in T}\times\V_{t\in T}$. \textbf{If:} We check that \eqref{fACEsoln} satisfies \eqref{fACE2}. Note first that we can rewrite \eqref{fACE2} as 
\[
\frac{du}{dt}(t) + Bu(t) - f + \frac{1}{2\varepsilon}\mathbf{1} = \frac{1}{\varepsilon}\beta(t).
\]
Next, let $u$ be as in \eqref{fACEsoln}. Then it is easy to check that 
\[
\frac{du}{dt}(t) = -Be^{-tB}u(0) +e^{-tB}\left( f - \frac{1}{2\varepsilon}\mathbf{1}\right) + \frac{1}{\varepsilon}\beta(t) - \frac{1}{\varepsilon}B\int_0^t e^{-(t-s)B}\beta(s) \; ds 
\]
and that this satisfies \eqref{fACE2}. Next, we check the regularity of $u$. The continuity of $u$ is immediate, as it is a sum of two smooth terms and the integral of a locally bounded function. To check that $u\in H^1_{loc}$: $u$ is bounded, so is locally $L^2$, and by above $du/dt$ is a sum of (respectively) two smooth functions, a bounded function and the integral of a locally bounded function, so is locally bounded and hence locally $L^2$. \\
\textbf{Only if:} We saw that \eqref{fACEsoln} solves \eqref{fACE2} with $\beta(t)\in\mathcal{B}(u(t))$ and $\beta(t)\in\V_{[-1/2,1/2]}$, and by (c) such solutions are unique.
\item We follow the proof of \cite[Theorem 3.13]{Budd}. Let $0\leq t_1<t_2$. Since \eqref{fACE2} is time-translation invariant, we have by (f) that 
\[
u(t_2)= e^{-(t_2-t_1)B} u(t_1)+B^{-1}\left(I-e^{-(t_2-t_1)B} \right)\left( f-\frac{1}{2\varepsilon}\mathbf{1} \right)+ \frac{1}{\varepsilon}\int_0^{t_2-t_1} e^{-sB}\beta(t_2-s) \; ds 
\]
and so, writing $B_{s}:=(e^{-s B}-I)/s$ for $s>0$ (which we note commutes with $B$),
\[
u(t_2)-u(t_1)= (t_2-t_1)B_{t_2-t_1}\left( u(t_1)-B^{-1}\left( f-\frac{1}{2\varepsilon}\mathbf{1} \right)\right)+ \frac{1}{\varepsilon}\int_0^{t_2-t_1} e^{-sB}\beta(t_2-s) \; ds .
\]
Note that $B_s$ is self-adjoint, and as $-B$ has largest eigenvalue less than $\varepsilon^{-1}$ we have $||B_{ s}||<(e^{s/\varepsilon}-1)/s$, with RHS monotonically increasing in $s$ for $s>0$.\footnote{$\frac{d}{ds}((e^{s/\varepsilon}-1)/s) = s^{-2}e^{s/\varepsilon}\left(e^{-s/\varepsilon}-1+s/\varepsilon\right)>0$ for $s>0$.} Since $f \in\V_{[0,||\mu||_\infty]}$ and for all $t$, $\beta(t)\in\V_{[-1/2,1/2]}$ and $u(t)\in\V_{[0,1]}$, we have for $t_2-t_1<1$:
 \begin{equation*}\begin{split}\frac{\left|\left| u(t_2)- u(t_1)\right|\right|_\V}{t_2-t_1}&\leq ||B_{t_2-t_1}||\cdot \left(1+||\mu||_\infty ||B^{-1}||+ \frac{1}{2\varepsilon}||B^{-1}||\right)||\mathbf{1}||_\V +\frac{1}{\varepsilon} \underset{s\in [0,t_2-t_1]}{\text{ess sup}} \left|\left|e^{-sB}\beta(t_2-s)  \right|\right|_\V \\
 &<  \frac{e^{(t_2-t_1)/\varepsilon}- 1}{ t_2-t_1 }\cdot  \left(1+||\mu||_\infty||B^{-1}||+ \frac{1}{2\varepsilon}||B^{-1}||\right)||\mathbf{1}||_\V + \frac{1}{\varepsilon}\sup_{s\in [0,t_2-t_1]} \left|\left|e^{-sB}\right|\right| \cdot \frac{1}{2}||\mathbf{1}||_\V \\
&\leq \frac{e^{(t_2-t_1)/\varepsilon}- 1}{ t_2-t_1 }\cdot  \left(1+||\mu||_\infty ||B^{-1}||+ \frac{1}{2\varepsilon}||B^{-1}||\right)||\mathbf{1}||_\V + \frac{1}{\varepsilon} e^{(t_2-t_1)/\varepsilon}  \cdot \frac{1}{2}||\mathbf{1}||_\V \\
&< \left (\left(1+||\mu||_\infty ||B^{-1}||+ \frac{1}{2\varepsilon}||B^{-1}||\right)\left(e^{1/\varepsilon}  -1\right) + \frac{1}{2\varepsilon}e^{1/\varepsilon}\right)  ||\mathbf{1}||_\V
\end{split}\end{equation*} and for $t_2 -t_1\geq 1$ we simply have
\[\frac{\left|\left| u(t_2)- u(t_1)\right|\right|_\V}{t_2-t_1}\leq \left|\left| u(t_2)- u(t_1)\right|\right|_\V \leq ||\mathbf{1}||_\V \] completing the proof. 
\item We prove this as Theorem \ref{fACEstab} for the solution given by Theorem \ref{SDlimit}, which by uniqueness is the general solution. 
\end{enumerate}
\vspace{-1.5\baselineskip}
\end{proof}
\begin{nb}
Given the various forward references in the above proof, we take care to avoid circularity by not using the corresponding results until they have been proven.
\end{nb}
\subsection{The SDIE scheme with fidelity forcing and link to the MBO scheme}
\begin{mydef}[SDIE scheme with fidelity forcing, cf. \text{\cite[Definition 4.1]{Budd}}]
\label{fSDdef}
For $u_0\in\V_{[0,1]}$, $n\in\mathbb{N}$, and $\lambda:=\tau/\varepsilon\in[0,1]$ we define the SDIE scheme iteratively\emph{:}
\be
\label{fSD}
(1-\lambda)u_{n+1} -\mathcal{S}_\tau u_n+\frac{\lambda}{2}\mathbf{1} =\lambda\beta_{n+1}
\ee
for a $\beta_{n+1}\in \mathcal{B}(u_{n+1})$ to be characterised in Theorem \ref{obsMMthm}.
\end{mydef}
As in \cite{Budd}, we have the key theorem linking the MBO scheme and the SDIE schemes for the ACE.
\begin{thm}[Cf. \text{\cite[Theorem 4.2]{Budd}}]
\label{obsMMthm}
For $\lambda\in[0,1]$, the pair $(u_{n+1},\beta_{n+1})$ is a solution to the SDIE scheme \eqref{fSD} for some $\beta_{n+1}\in\mathcal{B}(u_{n+1})$ if and only if $u_{n+1}$ solves\emph{:}
 \begin{equation}
	\label{fSDvar}
	u_{n+1}\in \underset{u\in \V_{[0,1]}}{ \argmin }  \: \lambda\left\langle u,\mathbf{1}-u\right \rangle_\V +  \left|\left|u-\mathcal{S}_\tau u_n\right|\right|^2_\V.
\end{equation}
Note that for $\lambda = 1$ \eqref{fSDvar} is equivalent to the variational problem \eqref{fMBOvar} that defines the MBO scheme. Furthermore, \eqref{fSDvar} has unique solution for $\lambda\in[0,1)$ 
\begin{equation}\label{fSDsoln1}
	(u_{n+1})_i=\begin{cases}
		0, &\text{if } \left(\mathcal{S}_\tau u_n\right)_i < \frac{1}{2}\lambda,
\\
		\frac{1}{2} + \frac{\left( \mathcal{S}_\tau u_n\right)_i - 1/2}{1-\lambda}
, &\text{if }\frac{1}{2}\lambda
\leq\left(\mathcal{S}_\tau u_n\right)_i < 1-\frac{1}{2}\lambda,
\\
		1, &\text{if } \left(\mathcal{S}_\tau u_n\right)_i \geq 1-\frac{1}{2}\lambda,
	\end{cases}
\end{equation}
with corresponding $\beta_{n+1} = \lambda^{-1}\left((1-\lambda)u -\mathcal{S}_\tau u_n+\frac{\lambda}{2}\mathbf{1}\right)$, and solutions for $\lambda = 1$ 
\be
(u_{n+1})_i \in \begin{cases}
	\{1\}, &(\mathcal{S}_\tau u_n)_i > 1/2,\\
	[0,1], &(\mathcal{S}_\tau u_n)_i = 1/2,\\
	\{0\}, &(\mathcal{S}_\tau u_n)_i < 1/2.
\end{cases} \ee
\emph{(}i.e. the MBO thresholding\emph{)} with corresponding $\beta_{n+1} = \frac{1}{2}\mathbf{1}- \mathcal{S}_\tau u_n $.
\end{thm}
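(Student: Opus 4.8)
The plan is to observe that, once $u_n$ has been diffused, the vector $c:=\mathcal{S}_\tau u_n$ enters only as fixed data, so the argument is that of \cite[Theorem 4.2]{Budd} with $c$ in place of $e^{-\tau\Delta}u_n$: the fidelity forcing is confined to the diffusion map $\mathcal{S}_\tau$ and plays no role in the minimisation itself. I would begin with a subdifferential computation establishing the equivalence of \eqref{fSD} and \eqref{fSDvar}. Writing $J(u):=\lambda\ip{u,\mathbf{1}-u}+||u-c||_\V^2$ for the objective of \eqref{fSDvar}, a direct calculation gives $\tfrac12\nabla_\V J(u)=(1-\lambda)u+\tfrac\lambda2\mathbf{1}-c$, so that the SDIE equation \eqref{fSD} reads exactly $\tfrac12\nabla_\V J(u_{n+1})=\lambda\beta_{n+1}$ with $\beta_{n+1}\in\mathcal{B}(u_{n+1})$.

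I would then match this to the optimality condition for the constrained problem. Minimising $J$ over $\V_{[0,1]}$ is minimising $J+\mathcal{I}$ over $\V$, where $\mathcal{I}(u):=\sum_{i\in V} I_{[0,1]}(u_i)$ is the indicator of $\V_{[0,1]}$; as $J$ is differentiable, $u$ is a minimiser iff $-\nabla_\V J(u)$ lies in the $\V$-subdifferential of $\mathcal{I}$ at $u$. A short check shows this subdifferential equals $-\mathcal{B}(u)$ (the $d_i^r$-weights are immaterial because each $\partial I_{[0,1]}(u_i)$ is a cone), so the optimality condition is $\nabla_\V J(u)\in\mathcal{B}(u)$. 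Since $\mathcal{B}(u)$ is a cone, for $\lambda>0$ this is equivalent to the existence of $\beta\in\mathcal{B}(u)$ with $\tfrac12\nabla_\V J(u)=\lambda\beta$, i.e. to \eqref{fSD}; the case $\lambda=0$, where \eqref{fSD} forces $u_{n+1}=c\in\V_{[0,1]}$ and \eqref{fSDvar} is minimised by the same $c$, is handled directly. Restricted to the compact set $\V_{[0,1]}$, $J$ is strictly convex for $\lambda\in[0,1)$, giving a unique minimiser, while for $\lambda=1$ it is affine, so minimisers exist but need not be unique and are exactly the solutions of the optimality condition.

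The explicit formulae then follow because both inner products are weighted sums over $V$, so \eqref{fSDvar} decouples into the vertexwise problems $\min_{x\in[0,1]}g(x)$ with $g(x)=\lambda x(1-x)+(x-c_i)^2=(1-\lambda)x^2+(\lambda-2c_i)x+c_i^2$. For $\lambda\in[0,1)$ this parabola has unconstrained minimiser $\tfrac12+\tfrac{c_i-1/2}{1-\lambda}$; projecting it onto $[0,1]$ yields the three cases of \eqref{fSDsoln1}, and the corresponding $\beta_{n+1}=\lambda^{-1}\bigl(\tfrac12\nabla_\V J(u_{n+1})\bigr)$ is read off from \eqref{fSD}. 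For $\lambda=1$ the quadratic term cancels and $g$ is affine with slope $1-2c_i$, whose minimisers over $[0,1]$ are precisely the MBO threshold values, with $\beta_{n+1}=\tfrac12\mathbf{1}-c$. Finally, multiplying the objective of \eqref{fSDvar} by $\tfrac1{2\tau}>0$ and using $\ip{u,\mathbf{1}-u}=\ip{\mathbf{1}-u,u}$ shows that at $\lambda=1$ it is equivalent, in the sense of $\simeq$, to \eqref{fMBOvar}.

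I expect the one genuine subtlety to be the subdifferential bookkeeping in the equivalence step: correctly identifying the $\V$-subdifferential of $\mathcal{I}$ with $-\mathcal{B}(u)$ under the $d_i^r$-weighted inner product, and using the cone structure of $\mathcal{B}(u)$ to absorb the scalar factors $\tfrac12$ and $\lambda$. The second point requiring care is the separate treatment of $\lambda=1$, where the loss of strict convexity produces the set-valued thresholding rather than a single minimiser. The remaining steps—the gradient of $J$, the vertex of the parabola, and the affine reduction—are routine.
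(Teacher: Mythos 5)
Your proposal is correct and takes the same route as the paper, whose entire proof is the observation that the argument of \cite[Theorem 4.2]{Budd} goes through verbatim with $e^{-\tau\Delta}u_n$ replaced by $\mathcal{S}_\tau u_n$ -- precisely your opening point that the fidelity forcing is confined to the diffusion map and the minimisation only sees $c=\mathcal{S}_\tau u_n$ as fixed data. The details you supply (the subdifferential identification $\partial_\V\mathcal{I}(u)=-\mathcal{B}(u)$ using the cone structure, the vertexwise parabola, and the separate affine case $\lambda=1$) all check out.
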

\begin{proof}
Identical to the proof of \cite[Theorem 4.2]{Budd} with the occurrences of ``$e^{-\tau\Delta}$'' in each instance replaced by ``$\mathcal{S}_\tau $''.
\end{proof}
As in \cite{Budd}, we can plot \eqref{fSDsoln1} to visualise the SDIE scheme \eqref{fSD} as a piecewise linear relaxation of the MBO thresholding rule.
\begin{figure}[h] 
\centering
\begin{tikzpicture}
\begin{axis}[
	height = 6cm,
    axis lines = left,axis line style={-},
    xlabel = $\left(\mathcal{S}_\tau u_n\right)_i$,
    ylabel = {$(u_{n+1})_i$},
    xtick={0,0.5,1},
    ytick={0,0.5,1},
    legend pos=south east,
    extra x ticks={0.15,0.85},
    extra x tick style={
    tick label style={
    ,anchor=north}},
    extra x tick labels={$\frac{1}{2}\lambda$,$1-\frac{1}{2}\lambda$},
xmin=0,xmax=1
]

\addplot [
    domain=0:0.15, 
    samples=100, 
    color=blue,
]
{0};

\addplot [
    domain=0.15:0.85, 
    samples=100, 
    color=blue,
]
{0.5 + (x-0.5)/0.7};
\addplot [
    domain=0.85:1, 
    samples=100, 
    color=blue,
]
{1};
\end{axis}
\begin{axis}[
height = 6cm,
axis y line*=right,
axis x line=none,
	axis line style={-},
    ylabel = {$(\beta_{n+1})_i$},
    ytick={-0.5,0,0.5},
xmin=0,xmax=1,ymin=-0.5,ymax=0.5]

\addplot [
    domain=0:0.15, 
    samples=100, 
    color=red,
]
{0.5-x/0.3};

\addplot [
    domain=0.15:0.85, 
    samples=100, 
    color=red,
]
{0};
\addplot [
    domain=0.85:1, 
    samples=100, 
    color=red,
]
{-0.5+(1-x)/0.3};
\end{axis}
\end{tikzpicture}
\caption{Plot of the SDIE updates $u_{n+1}$ (blue, left axis, see \eqref{fSDsoln1}) and $\beta_{n+1}$ (red, right axis) at vertex $i$ for $0\leq\lambda <1$ as a function of the fidelity forced diffused value at $i$. Cf. \cite[Fig. 1]{Budd}.} 
\label{SDfig}
\end{figure}
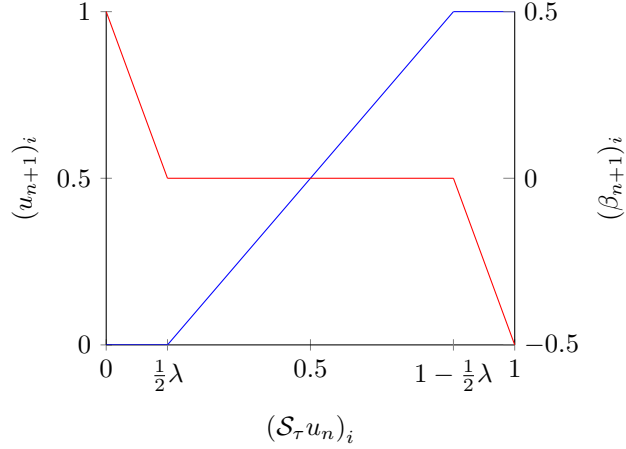 
Next, we note that we have the same Lipschitz continuity property from~\cite{Budd}.
\begin{thm}[\text{Cf. \cite[Theorem 4.4]{Budd}}]
For $\lambda \in[0,1)$\footnote{For the MBO case $\lambda = 1$ the thresholding is discontinuous so we do not get an analogous property.} and all $n\in\mathbb{N}$, if $u_n$ and $v_n$ are defined according to Definition \ref{fSDdef} with initial states $u_0,v_0\in\V_{[0,1]}$ and $\xi_1 := \min\sigma(A)$ then 
\be\label{fSDstab}
||u_n-v_n||_\V \leq e^{-n\xi_1 \tau}(1-\lambda)^{-n}||u_0-v_0||_\V.
\ee
\end{thm}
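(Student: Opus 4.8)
The plan is to reduce the iteration to a composition of one vertexwise map and one linear operator, and to track how each affects the $\V$-norm of the difference $u_n-v_n$. First I would invoke Theorem \ref{obsMMthm}: for $\lambda\in[0,1)$ the update is $u_{n+1}=\Phi_\lambda(\mathcal{S}_\tau u_n)$, where $\Phi_\lambda:\mathbb{R}\to[0,1]$ is applied vertexwise and is the continuous piecewise-linear clamp given by \eqref{fSDsoln1} (value $0$ below $\tfrac12\lambda$, value $1$ above $1-\tfrac12\lambda$, and the affine piece of slope $(1-\lambda)^{-1}$ in between). The two flat pieces have slope $0$ and the middle piece has slope $(1-\lambda)^{-1}>0$, so $\Phi_\lambda$ is globally Lipschitz with constant $(1-\lambda)^{-1}$. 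This is where the restriction $\lambda<1$ is essential: at $\lambda=1$ the middle piece degenerates into the discontinuous MBO threshold and no finite Lipschitz constant exists, which is exactly why the theorem excludes that case.

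Next I would pass this vertexwise bound to the $\V$-norm. Since
$|(u_{n+1})_i-(v_{n+1})_i| = |\Phi_\lambda((\mathcal{S}_\tau u_n)_i)-\Phi_\lambda((\mathcal{S}_\tau v_n)_i)| \le (1-\lambda)^{-1}\,|(\mathcal{S}_\tau u_n)_i-(\mathcal{S}_\tau v_n)_i|$
for each $i\in V$, squaring, multiplying by the weight $d_i^r$ and summing yields $||u_{n+1}-v_{n+1}||_\V \le (1-\lambda)^{-1}||\mathcal{S}_\tau u_n-\mathcal{S}_\tau v_n||_\V$, because the weights $d_i^r$ defining $||\cdot||_\V$ factor through the vertexwise inequality untouched.

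Then I would treat the diffusion step. By the explicit solution map \eqref{fdiffusesoln}, the affine forcing $A^{-1}(I-e^{-\tau A})f$ is independent of the initial datum and cancels in the difference, so $\mathcal{S}_\tau u_n-\mathcal{S}_\tau v_n=e^{-\tau A}(u_n-v_n)$. It then remains to bound $||e^{-\tau A}||$, and this is the one genuinely new ingredient compared with the non-fidelity-forced case of \cite{Budd}: whereas $e^{-\tau\Delta}$ has operator norm $1$ (since $0\in\sigma(\Delta)$), the matrix $A=\Delta+M$ is self-adjoint and \emph{strictly} positive definite by Proposition \ref{Aspec}, hence $e^{-\tau A}$ is self-adjoint with $||e^{-\tau A}||=\rho(e^{-\tau A})=e^{-\tau\xi_1}$ for $\xi_1=\min\sigma(A)>0$. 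This strict contraction of the diffusion is precisely what produces the favourable factor $e^{-\xi_1\tau}$.

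Combining the two steps gives the one-step estimate $||u_{n+1}-v_{n+1}||_\V \le e^{-\xi_1\tau}(1-\lambda)^{-1}||u_n-v_n||_\V$, and a straightforward induction on $n$ yields \eqref{fSDstab}. The point requiring care — rather than real difficulty — is the interplay of the two competing factors: the relaxed thresholding \emph{expands} by $(1-\lambda)^{-1}$ while fidelity-forced diffusion \emph{contracts} by $e^{-\xi_1\tau}$, so the iteration is a genuine contraction only when $e^{-\xi_1\tau}<1-\lambda$. The two substantive inputs are therefore the clean Lipschitz constant $(1-\lambda)^{-1}$ of $\Phi_\lambda$ and the identity $||e^{-\tau A}||=e^{-\xi_1\tau}$ obtained from self-adjointness together with the standard fact $||F||=\rho(F)$ for self-adjoint $F$.
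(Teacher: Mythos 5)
Your proof is correct and is precisely the argument the paper intends by its citation of \cite[Theorem 4.4]{Budd} \emph{mutatis mutandis}: compose the $(1-\lambda)^{-1}$-Lipschitz vertexwise thresholding map from \eqref{fSDsoln1} with the diffusion step, the only modification being that the affine forcing term cancels in the difference and $\left|\left|e^{-\tau A}\right|\right| = e^{-\xi_1\tau}$ replaces $\left|\left|e^{-\tau\Delta}\right|\right| = 1$ by self-adjointness and strict positive definiteness of $A$ (Proposition \ref{Aspec}). Nothing further is needed.
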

\begin{proof} Follows as in \cite[Theorem 4.4]{Budd} \emph{mutatis mutandis}.
\end{proof}
\subsection{A Lyapunov functional for the SDIE scheme}
\begin{lem}[\text{Cf. \cite[Lemma 4.6]{vGGOB}}] The functional on $\V$ \begin{equation*}
	J(u) := \ip{u,\mathbf{1}-2A^{-1}(I - e^{-\tau A})f-e^{-\tau A}u}
\end{equation*}
has the following properties\emph{:}
\begin{enumerate}[i.]
\item It is strictly concave.
\item It has first variation at $u$ \[ L_u(v) := \left\langle v,\mathbf{1}-2\mu A^{-1}(I - e^{-\tau A})f-2e^{-\tau A}u\right \rangle_\V = \left\langle v,\mathbf{1}-2\mathcal{S}_\tau u\right \rangle_\V. \] 
\end{enumerate}
\end{lem}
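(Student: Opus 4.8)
The plan is to treat $J$ as an ordinary quadratic functional on $\V$ and to read off both claims from its quadratic part. Writing $c := A^{-1}(I - e^{-\tau A})f$, which is a fixed vector independent of $u$, the definition of $J$ expands as
\[ J(u) = \ip{u,\mathbf{1}} - 2\ip{u,c} - \ip{u,e^{-\tau A}u}, \]
so $J$ is the sum of the affine term $\ip{u,\mathbf{1}-2c}$ and the quadratic form $u\mapsto -\ip{u,e^{-\tau A}u}$. Everything then hinges on understanding the operator $e^{-\tau A}$ relative to the degree-weighted inner product $\ip{\cdot,\cdot}$.

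For (i), the key observation I would establish is that $e^{-\tau A}$ is self-adjoint and strictly positive definite with respect to $\ip{\cdot,\cdot}$. Since $\Delta$ is self-adjoint with respect to $\ip{\cdot,\cdot}$ and $M = \operatorname{diag}(\mu)$ is manifestly self-adjoint for this weighted inner product, $A = \Delta + M$ is self-adjoint; hence so is every power $A^n$ and therefore the power-series limit $e^{-\tau A}$. By Proposition \ref{Aspec} the eigenvalues of $A$ lie in $(0,\|\Delta\|+\|\mu\|_\infty]$, so the eigenvalues $e^{-\tau\lambda}$ of $e^{-\tau A}$ are all strictly positive, giving $\ip{u,e^{-\tau A}u} > 0$ for every $u\neq\mathbf{0}$. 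I would then conclude strict concavity directly from the expansion $J(u+h)-J(u) = \ip{h,\mathbf{1}-2c-2e^{-\tau A}u} - \ip{h,e^{-\tau A}h}$, whose quadratic-in-$h$ part is strictly negative, or equivalently by noting that the second-variation form equals $-2e^{-\tau A} \prec 0$.

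For (ii), I would simply compute the Gateaux derivative. Expanding $J(u+sv)$ in $s$ and using self-adjointness of $e^{-\tau A}$ to merge the two cross terms (this is exactly where the factor $2$ arises), the first-order coefficient is
\[ \left.\frac{d}{ds}\right|_{s=0} J(u+sv) = \ip{v,\mathbf{1}-2c} - 2\ip{v,e^{-\tau A}u} = \ip{v,\mathbf{1}-2c-2e^{-\tau A}u}. \]
Substituting back $c = A^{-1}(I-e^{-\tau A})f$ and recognising $\mathcal{S}_\tau u = e^{-\tau A}u + A^{-1}(I-e^{-\tau A})f$ from \eqref{fdiffusesoln} collapses the bracket into $\mathbf{1}-2\mathcal{S}_\tau u$, yielding $L_u(v) = \ip{v,\mathbf{1}-2\mathcal{S}_\tau u}$ as claimed.

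There is no genuine obstacle here once $J$ is seen as a quadratic; the computation in (ii) is routine. The only point that requires care — and on which both parts rest — is the self-adjointness and strict positive-definiteness of $e^{-\tau A}$ with respect to the degree-weighted inner product $\ip{\cdot,\cdot}$ rather than the Euclidean one. This follows from Proposition \ref{Aspec} together with the stability of self-adjointness under the matrix-exponential power series, and I would make sure to state it explicitly before invoking it.
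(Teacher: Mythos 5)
Your proposal is correct and follows essentially the same route as the paper: expand the quadratic functional around $u$, use self-adjointness of $e^{-\tau A}$ to merge the cross terms into $-2\ip{v,e^{-\tau A}u}$ for the first variation, and use strict positive-definiteness of $e^{-\tau A}$ for strict concavity. You are in fact slightly more careful than the paper, which asserts $-2\ip{v,e^{-\tau A}v}<0$ without explicitly recording the positive-definiteness argument via Proposition \ref{Aspec} that you spell out.
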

\begin{proof}
Let $w: = \mathbf{1}-2A^{-1}(I - e^{-\tau A})f$. We expand around $u$: \[\begin{split}J(u+tv) &= \ip{u+tv, w- e^{\tau A}(u+tv)} \\
&= \ip{u, w- e^{\tau A}u} +t \ip{v, w- e^{\tau A}u}-t\ip{u,e^{-tA}v} - t^2\ip{v,e^{-\tau A} v}.\end{split}\] 
\begin{enumerate}[i.]
\item $\frac{d^2}{dt^2}J(u+tv) = -2\ip{v,e^{-\tau A} v} <0$ for $v\neq \mathbf{0}$.
\item Since $e^{-tA}$ is self-adjoint, $J(u+tv) =  J(u) + tL_u(v) +\bigO(t^2)$.
\end{enumerate}
\vspace{-1.5\baselineskip}
\end{proof}
\begin{thm}[Cf. \text{\cite[Theorem 4.9]{Budd}}]
\label{fLyapthm}
	For $0\leq\lambda\leq 1$ we define on $\V_{[0,1]}$ the functional 
	\begin{equation}\label{Lyap}
	H(u):= J(u) + (\lambda-1)\ip{u,\mathbf{1}-u} .
	\end{equation}
	This has uniform lower bound \be H(u) \geq -2\tau||f||_\V||\mathbf{1}||_\V \ee and is a Lyapunov functional for \eqref{fSD}, i.e. $H(u_{n+1}) \leq H(u_n)$ with equality if and only if $u_{n+1}=u_n$ for $u_{n+1}$ defined by \eqref{fSD}. In particular, we have that \begin{equation}\label{Hstep}
		H(u_n)-H(u_{n+1}) \geq (1-\lambda)\left|\left|u_{n+1}-u_n\right|\right|^2_\V.
	\end{equation} 
\end{thm}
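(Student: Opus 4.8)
The plan is to prove the two assertions separately, handling the Lyapunov descent exactly as in \cite[Theorem 4.9]{Budd} but carefully tracking the extra fidelity contributions that arise because $e^{-\tau A}\mathbf{1}\neq\mathbf{1}$ in the forced setting.

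For the \textbf{lower bound}, I would first expand $H$ using the fixed vector $c := A^{-1}(I - e^{-\tau A})f$ into the form
\[ H(u) = \lambda\ip{u,\mathbf{1} - e^{-\tau A}u} + (1-\lambda)\ip{u,(I - e^{-\tau A})u} - 2\ip{u,c}. \]
On $\V_{[0,1]}$ the first two terms are non-negative: $I - e^{-\tau A}$ is positive semi-definite since $A$ is positive definite by Proposition~\ref{Aspec}, and $\mathbf{1} - e^{-\tau A}u \geq \mathbf{0}$ vertexwise because $e^{-\tau A}$ is a non-negative matrix (as shown in the proof of Theorem~\ref{fdiffusethm}) while $e^{-\tau A}\mathbf{1} = \mathbf{1} - A^{-1}(I - e^{-\tau A})\mu \leq \mathbf{1}$, using $A\mathbf{1}=\mu\geq\mathbf{0}$ and that $A^{-1}(I - e^{-\tau A}) = \int_0^\tau e^{-sA}\,ds$ is a non-negative operator. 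For the last term I would use that this same operator has norm at most $\tau$ (its eigenvalues are $(1 - e^{-\tau\lambda_i})/\lambda_i \leq \tau$), together with $\|u\|_\V \leq \|\mathbf{1}\|_\V$ on $\V_{[0,1]}$, so that Cauchy--Schwarz gives $-2\ip{u,c} \geq -2\tau\|f\|_\V\|\mathbf{1}\|_\V$, which is exactly the claimed bound.

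For the \textbf{Lyapunov property}, set $w := u_n - u_{n+1}$ and split $H(u_n) - H(u_{n+1})$ into the difference of $J$ and the difference of $(\lambda-1)\ip{\cdot,\mathbf{1}-\cdot}$. By concavity of $J$ (the preceding Lemma) with first variation $L_{u_n}(v) = \ip{v,\mathbf{1} - 2\mathcal{S}_\tau u_n}$, one has $J(u_n) - J(u_{n+1}) \geq \ip{w,\mathbf{1} - 2\mathcal{S}_\tau u_n}$, strictly when $w\neq\mathbf{0}$. The algebraic identity $\ip{u_n,\mathbf{1}-u_n} - \ip{u_{n+1},\mathbf{1}-u_{n+1}} = \ip{w,\mathbf{1} - u_n - u_{n+1}}$ handles the second piece, giving
\[ H(u_n) - H(u_{n+1}) \geq \ip{w,\;\lambda\mathbf{1} - 2\mathcal{S}_\tau u_n + (1-\lambda)(u_n + u_{n+1})}. \]
The decisive step is then to substitute the SDIE relation \eqref{fSD} as $2\mathcal{S}_\tau u_n = 2(1-\lambda)u_{n+1} + \lambda\mathbf{1} - 2\lambda\beta_{n+1}$, which collapses the bracketed vector to $(1-\lambda)w + 2\lambda\beta_{n+1}$, yielding
\[ H(u_n) - H(u_{n+1}) \geq (1-\lambda)\|w\|_\V^2 + 2\lambda\ip{w,\beta_{n+1}}. \]
This is \eqref{Hstep} once I verify $\ip{w,\beta_{n+1}} \geq 0$, a vertexwise check against $\mathcal{B}(u_{n+1})$: where $(u_{n+1})_i = 0$ we have $w_i = (u_n)_i \geq 0$ and $(\beta_{n+1})_i \geq 0$; where $(u_{n+1})_i = 1$ we have $w_i \leq 0$ and $(\beta_{n+1})_i \leq 0$; and where $(u_{n+1})_i \in (0,1)$ we have $(\beta_{n+1})_i = 0$. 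For the ``equality if and only if'' claim I would invoke the \emph{strict} concavity of $J$: the first inequality above is strict whenever $w\neq\mathbf{0}$, so $H(u_n) - H(u_{n+1}) > 0$ for $u_{n+1}\neq u_n$ for \emph{every} $\lambda\in[0,1]$ — in particular for the MBO case $\lambda=1$, where the $(1-\lambda)\|w\|_\V^2$ term degenerates.

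The main obstacle I anticipate is the fidelity bookkeeping rather than any new idea: verifying $e^{-\tau A}\mathbf{1}\leq\mathbf{1}$ (which replaces the exact conservation $e^{-\tau\Delta}\mathbf{1}=\mathbf{1}$ relied upon in \cite{Budd}) and confirming that the $\mathcal{S}_\tau$-substitution telescopes as cleanly as in the unforced case. By contrast, the subdifferential positivity $\ip{w,\beta_{n+1}} \geq 0$ and the concavity argument transfer essentially verbatim from \cite{Budd}.
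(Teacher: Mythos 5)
Your proposal is correct and follows essentially the same route as the paper's proof: the Lyapunov descent via strict concavity of $J$, the substitution of the SDIE relation \eqref{fSD} to collapse the bracket to $(1-\lambda)(u_n-u_{n+1})+2\lambda\beta_{n+1}$, the vertexwise sign check on $\ip{u_n-u_{n+1},\beta_{n+1}}$, and the bound $\left|\left|A^{-1}(I-e^{-\tau A})\right|\right|\leq\tau$ for the fidelity term are all identical. The only (harmless) divergence is in the lower bound, where the paper splits off $\lambda\ip{u,\mathbf{1}-u}\geq 0$ directly rather than $\lambda\ip{u,\mathbf{1}-e^{-\tau A}u}$, which spares it your (correct) auxiliary verification that $e^{-\tau A}\mathbf{1}\leq\mathbf{1}$.
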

\begin{proof}
We can rewrite $H$ as:
\begin{align*}
H(u) &= \lambda\ip{u,\mathbf{1}-u} + \ip{u,u-2 A^{-1}(I - e^{-\tau A})f-e^{-\tau A}u}&&&\\
&\geq \ip{u,(I-e^{-\tau A})u} - 2\ip{u,A^{-1}(I - e^{-\tau A})f}& &\text{since $u\in\V_{[0,1]}$}&\\ 
&\geq - 2\ip{u,A^{-1}(I - e^{-\tau A})f}& &\text{since $I - e^{-\tau A}$ is positive definite }&\\
&\geq -2||f||_\V||u||_\V\left|\left|A^{-1}(I-e^{-\tau A})\right|\right|\\
&\geq -2||f||_\V||\mathbf{1}||_\V\left|\left|A^{-1}(I-e^{-\tau A})\right|\right| \geq -2\tau ||f||_\V ||\mathbf{1}||_\V
\end{align*}
where the final line follows since $A^{-1}(I-e^{-\tau A})$ is self-adjoint (since $A$ is) and has eigenvalues \[\left\{\frac{1-e^{-\tau\lambda}}{\lambda}	\;\middle|\; \lambda \in\sigma(A)\right\}\]
so we have by Proposition \ref{Aspec} that
\[ \begin{split}\left|\left|A^{-1}(I-e^{-\tau A})\right|\right| &\leq \sup_{x\in(0,||\Delta||+\mu]} \frac{1-e^{-\tau x}}{x}\\
&= \lim_{x\rightarrow 0}  \frac{1-e^{-\tau x}}{x}\quad \text{ as $x\mapsto x^{-1}(1-e^{-\tau x})$ is monotonically decreasing\footnotemark}\\
&= \tau. 
\end{split}\]\footnotetext{$\frac{d}{dx}(x^{-1}(1-e^{-\tau x}))=x^{-2}e^{-\tau x}(1+\tau x-e^{\tau x}) \leq 0$.}
	Next we show that $H$ is a Lyapunov functional. By the concavity of $J$:
	\[\begin{split}
	H(u_{n}) - H(u_{n+1}) &= J(u_{n}) - J(u_{n+1}) + (1-\lambda)\ip{u_{n+1},\mathbf{1}-u_{n+1}} - (1-\lambda)\ip{u_n,\mathbf{1}-u_n} \\
	&\geq L_{u_n}(u_{n} - u_{n+1}) + (1-\lambda)\ip{u_{n+1},\mathbf{1}-u_{n+1}} - (1-\lambda)\ip{u_n,\mathbf{1}-u_n} \:\left(*\right)\\
&= \ip{u_{n} - u_{n+1},\mathbf{1}-2\mathcal{S}_\tau u_n} + (1-\lambda)\ip{u_{n+1},\mathbf{1}-u_{n+1}} - (1-\lambda)\ip{u_n,\mathbf{1}-u_n} \\
	&= \ip{u_{n} - u_{n+1},\mathbf{1}-2\mathcal{S}_\tau u_n} + (1-\lambda)(\ip{u_{n+1}-u_n,\mathbf{1}} +\ip{u_n,u_n} -\ip{u_{n+1},u_{n+1}})\\
	&= \ip{u_{n} - u_{n+1},\lambda\mathbf{1}-2\mathcal{S}_\tau u_n + (1-\lambda)u_{n+1} + (1-\lambda)u_n}\\
	&= \ip{u_{n} - u_{n+1},2\lambda\beta_{n+1}+ (1-\lambda)(u_n-u_{n+1})} \text{ by \eqref{fSD}}\\
	&\geq (1-\lambda)\left|\left|u_{n+1}-u_n\right|\right|^2_\V\geq 0
	\end{split}
	\]
	with equality in $(*)$ if and only if $u_{n+1}=u_n$ as the concavity of $J$ is strict, and
where the last line follows since by $\beta_{n+1}\in\mathcal{B}(u_{n+1})$
\[(\beta_{n+1})_i((u_{n})_i-(u_{n+1})_i) = \left. \begin{cases} \text{``}\geq 0\text{''} \cdot (u_n)_i, &(u_{n+1})_i = 0\\
0, &(u_{n+1})_i \in (0,1)\\
\text{``}\leq 0\text{''} \cdot \left((u_n)_i-1\right), &(u_{n+1})_i = 1
\end{cases} \right\} \geq 0 \]
and so $\ip{u_{n} - u_{n+1},\beta_{n+1}}\geq 0$.
\end{proof}
\begin{cor}
For $\lambda = 1$ \emph{(}i.e. the MBO case\emph{)} the sequence $u_n$ defined by \eqref{fSD} is eventually constant.

For $0\leq \lambda\leq 1$, the sum
\[
\sum_{n=0}^\infty ||u_{n+1}-u_n||_\V^2 
\]
converges, and hence 
\[
\lim_{n\rightarrow\infty}  ||u_{n+1}-u_n||_\V = 0.
\]
\end{cor}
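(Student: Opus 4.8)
The plan is to derive everything from the Lyapunov functional $H$ of Theorem~\ref{fLyapthm}, handling the degenerate case $\lambda = 1$ separately from the range $\lambda\in[0,1)$.

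First I would dispatch $\lambda=1$. By the MBO thresholding rule \eqref{fMBO} (which coincides with \eqref{fSD} at $\lambda=1$ by Theorem~\ref{obsMMthm}, fixing the thresholding choice so that ties map into $\{0,1\}$), every iterate $u_n$ with $n\geq 1$ lies in the finite set $\V_{\{0,1\}}$, which has $2^{|V|}$ elements. By Theorem~\ref{fLyapthm}, $H$ is non-increasing along the sequence and, crucially, $H(u_{n+1})=H(u_n)$ forces $u_{n+1}=u_n$. Since $H$ takes only finitely many values on $\V_{\{0,1\}}$, a sequence along which $H$ strictly decreases at each step can pass through at most $2^{|V|}$ distinct states; hence after finitely many steps we reach an $N$ with $H(u_{N+1})=H(u_N)$, giving $u_{N+1}=u_N$. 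As the SDIE update is a deterministic autonomous map, $u_{N+1}=u_N$ propagates to $u_m=u_N$ for all $m\geq N$, so the sequence is eventually constant.

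Next I would prove convergence of the series for $\lambda\in[0,1)$ by telescoping. Summing the key estimate \eqref{Hstep} over $n=0,\ldots,N-1$ and invoking the uniform lower bound on $H$ from Theorem~\ref{fLyapthm} gives
\[
(1-\lambda)\sum_{n=0}^{N-1}||u_{n+1}-u_n||_\V^2 \leq H(u_0)-H(u_N) \leq H(u_0)+2\tau||f||_\V||\mathbf{1}||_\V .
\]
Since $\lambda<1$, the partial sums of this non-negative series are bounded above uniformly in $N$, so the series converges. For $\lambda=1$ the series converges trivially, as eventual constancy makes all but finitely many terms vanish. In either case, convergence of $\sum_{n=0}^\infty||u_{n+1}-u_n||_\V^2$ forces its terms to vanish, i.e. $\lim_{n\to\infty}||u_{n+1}-u_n||_\V=0$.

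The main obstacle is precisely the degenerate case $\lambda=1$, where the factor $(1-\lambda)$ in \eqref{Hstep} vanishes and the telescoping estimate carries no information. The workaround is to exploit that the MBO iterates are confined to the finite set $\V_{\{0,1\}}$, so that the strict-decrease-unless-fixed property of $H$ can only hold finitely often; this replaces the quantitative summability argument by a purely combinatorial monovariant argument on a finite state space. Care must be taken to invoke the strictness clause of Theorem~\ref{fLyapthm} (equality in the Lyapunov inequality if and only if $u_{n+1}=u_n$) together with the determinism of the scheme in order to pass from a single fixed step to eventual constancy.
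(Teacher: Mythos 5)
Your proposal is correct and follows essentially the same route as the paper, which simply defers to \cite[Proposition 4.6]{vGGOB} for the MBO case (finite state space $\V_{\{0,1\}}$ plus strict decrease of the Lyapunov functional $H$ unless the iterate is fixed) and to \cite[Corollary 4.10]{Budd} for the summability claim (telescoping \eqref{Hstep} against the uniform lower bound on $H$). Your explicit handling of the $\lambda=1$ tie-breaking convention and of the trivial convergence of the series in that case matches the intended \emph{mutatis mutandis} adaptation.
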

\begin{proof}
For the first claim, the proof follows as in \cite[Proposition 4.6]{vGGOB} \emph{mutatis mutandis}.
For the second claim, the proof follows as in \cite[Corollary 4.10]{Budd} \emph{mutatis mutandis}.
\end{proof}
\subsection{Convergence of the SDIE scheme with fidelity forcing}
Following \cite{Budd}, we first derive the $n^{\text{th}}$ term for the semi discrete sceme. 
\begin{prop}[Cf. \text{\cite[Proposition 5.1]{Budd}}]
For the sake of notation, define\emph{:}
\[w := -\frac{1}{2}\lambda\mathbf{1} + A^{-1}(I-e^{-\tau A})f.\]
Then for $\lambda\in [0,1)$ the sequence generated by \eqref{fSD} is given by\emph{:}
\be
\label{fSDsoln}
u_n =(1-\lambda)^{-n}e^{-n\tau A}u_0+\sum_{k=1}^n (1-\lambda)^{-k}e^{-(k-1)\tau A}w+\frac{\lambda}{1-\lambda}\sum_{k=1}^n (1-\lambda)^{-(n-k)}e^{-(n-k)\tau A}\beta_k.
\ee
\end{prop}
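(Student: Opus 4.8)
The plan is to turn the SDIE scheme \eqref{fSD} into an explicit one-step linear recursion for $u_{n+1}$, and then unfold that recursion by induction on $n$. First I would substitute the solution map \eqref{fdiffusesoln}, $\mathcal{S}_\tau u_n = e^{-\tau A}u_n + A^{-1}(I - e^{-\tau A})f$, into \eqref{fSD}. Collecting the terms that depend on neither $u_n$ nor $\beta_{n+1}$ into $w = -\frac{1}{2}\lambda\mathbf{1} + A^{-1}(I - e^{-\tau A})f$ and dividing by $1-\lambda>0$ (which is the only place $\lambda\in[0,1)$ is used) gives
\[
u_{n+1} = (1-\lambda)^{-1}e^{-\tau A}u_n + (1-\lambda)^{-1}w + \frac{\lambda}{1-\lambda}\beta_{n+1}.
\]

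Next I would prove \eqref{fSDsoln} by induction. The base case $n=0$ is immediate: both sums in \eqref{fSDsoln} are empty and the first term reduces to $u_0$. For the inductive step I substitute the inductive hypothesis for $u_n$ into the one-step recursion above and apply $(1-\lambda)^{-1}e^{-\tau A}$ to each of its three pieces, using $(e^{-\tau A})^k = e^{-k\tau A}$. The $u_0$-term passes directly to $(1-\lambda)^{-(n+1)}e^{-(n+1)\tau A}u_0$. For the $w$-sum, the multiplication shifts each summand from $(1-\lambda)^{-k}e^{-(k-1)\tau A}w$ to $(1-\lambda)^{-(k+1)}e^{-k\tau A}w$; after reindexing and appending the freshly produced $(1-\lambda)^{-1}w$ as the new bottom ($k=1$) term, this is exactly $\sum_{k=1}^{n+1}(1-\lambda)^{-k}e^{-(k-1)\tau A}w$. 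Likewise, for the $\beta$-sum the multiplication raises each exponent in $(1-\lambda)^{-(n-k)}e^{-(n-k)\tau A}$ by one, to $(1-\lambda)^{-((n+1)-k)}e^{-((n+1)-k)\tau A}$, while the new term $\frac{\lambda}{1-\lambda}\beta_{n+1}$ supplies the top ($k=n+1$) summand, for which the operator coefficient is $(1-\lambda)^{0}e^{0} = I$. This produces precisely \eqref{fSDsoln} with $n$ replaced by $n+1$.

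The argument is a purely formal unfolding of a linear recursion, so there is no genuine analytic obstacle; the only point requiring care is the bookkeeping of the three index shifts, and in particular verifying that the inhomogeneous contributions $(1-\lambda)^{-1}w$ and $\frac{\lambda}{1-\lambda}\beta_{n+1}$ slot correctly into the lower end of the $w$-sum and the upper end of the $\beta$-sum respectively. As in \cite[Proposition 5.1]{Budd}, this is a statement about the sequence given that the $\beta_k\in\mathcal{B}(u_k)$ are those generated by the scheme via Theorem \ref{obsMMthm}; we do not solve for the $\beta_k$ here, but merely carry them through as given.
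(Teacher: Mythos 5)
Your proposal is correct and takes essentially the same approach as the paper: substitute \eqref{fdiffusesoln} into \eqref{fSD} to obtain the one-step affine recursion $(1-\lambda)u_{n+1}=e^{-\tau A}u_n+w+\lambda\beta_{n+1}$, then unfold it by induction on $n$, with exactly the index shifts you describe (the new $(1-\lambda)^{-1}w$ entering as the $k=1$ term of the $w$-sum and $\frac{\lambda}{1-\lambda}\beta_{n+1}$ as the $k=n+1$ term of the $\beta$-sum). Your closing remark that the $\beta_k$ are simply carried through as given matches the paper's treatment as well.
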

\begin{proof}
We can rewrite \eqref{fSD} as 
\[(1-\lambda)u_{n+1}= e^{-\tau A}u_n + A^{-1}(I-e^{-tA})f -\frac{1}{2}\lambda\mathbf{1}+\lambda\beta_{n+1} = e^{-\tau A}u_n +w+\lambda\beta_{n+1}.  \]
We then check \eqref{fSDsoln} inductively. The $n=0$ case is trivial, and we have that
\[\begin{split}
&(1-\lambda)^{-1}e^{-\tau A}u_n +
(1-\lambda)^{-1}w+\frac{\lambda}{1-\lambda}\beta_{n+1}\\
 &=  (1-\lambda)^{-(n+1)}e^{-(n+1)\tau A}u_0+\sum_{k=1}^n (1-\lambda)^{-(k+1)}e^{-k\tau A}w+\frac{\lambda}{1-\lambda}\sum_{k=1}^n (1-\lambda)^{-((n+1)-k)}e^{-((n+1)-k)\tau A}\beta_k\\
&\:\:+ (1-\lambda)^{-1}w+\frac{\lambda}{1-\lambda}\beta_{n+1} \\
&=(1-\lambda)^{-(n+1)}e^{-(n+1)\tau A}u_0+\sum_{k=0}^n (1-\lambda)^{-(k+1)}e^{-k\tau A}w+\frac{\lambda}{1-\lambda}\sum_{k=1}^{n+1} (1-\lambda)^{-((n+1)-k)}e^{-((n+1)-k)\tau A}\beta_k\\
&=u_{n+1}
\end{split}\]
completing the proof.
\end{proof}
Next, we consider the asymptotics of each term in \eqref{fSDsoln}.
\begin{thm}
Considering $\bigO$ relative to the limit of $\tau\downarrow 0$ and $n\rightarrow \infty$ with $n\tau-t\in [0,\tau)$ for some fixed $t\geq 0$ and for fixed $\varepsilon>0$ \emph{(}with $\varepsilon^{-1}\notin\sigma(A)$\emph{)}\footnote{More precisely, we will say for real (matrix) valued $g$, $g(\tau,n)=\bigO(\tau)$ if and only if $\operatorname{limsup} ||g(\tau,n)/\tau|| <\infty$ as $(\tau,n)\rightarrow(0,\infty)$ in $\{(\rho,m)\mid \rho > 0,\: m\rho-t\in[0,\rho)\}$ with the subspace topology induced by the standard topology on $ (0,\infty)\times\mathbb{N}$.}, and recalling that $\lambda:=\tau/\varepsilon$, $B:=A-\varepsilon^{-1}I$ and $w := -\frac{1}{2}\lambda\mathbf{1} + A^{-1}(I-e^{-\tau A})f$\emph{:}
\begin{enumerate}[i.]
\item $(1-\lambda)^{-n}e^{-n\tau A}u_0 = e^{t/\varepsilon}e^{-tA}u_0 +\bigO(\tau)= e^{-tB}u_0 +\bigO(\tau)$.
\item $\sum_{k=1}^n (1-\lambda)^{-k}e^{-(k-1)\tau A}w= B^{-1}(I-e^{-t B})f-\frac{1}{2\varepsilon}B^{-1}(I-e^{-t B})\mathbf{1} + \bigO(\tau)$.
\item $\frac{\lambda}{1-\lambda}\sum_{k=1}^n (1-\lambda)^{-(n-k)}e^{-(n-k)\tau A}\beta_k = \lambda\sum_{k=1}^n e^{-(n-k)\tau B}\beta_k + \bigO(\tau)$.
\end{enumerate}
Hence by \eqref{fSDsoln}, the SDIE term obeys 
\be
\label{fSDsoln2}
u_n = e^{-tB}u_0 + B^{-1}(I-e^{-t B})f-\frac{1}{2\varepsilon}B^{-1}(I-e^{-t B})\mathbf{1}+ \lambda\sum_{k=1}^n e^{-(n-k)\tau B}\beta_k +\bigO(\tau).
\ee
\end{thm}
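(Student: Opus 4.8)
The plan is to analyze each of the three terms in the explicit formula \eqref{fSDsoln} separately, establishing the three asymptotic claims (i)--(iii) in turn, and then sum them. The unifying theme is that $n\tau \to t$ while $\lambda = \tau/\varepsilon \to 0$, so each discrete object should converge to its continuous counterpart built from $B = A - \varepsilon^{-1}I$, with the exponential factor $(1-\lambda)^{-n}$ contributing the $e^{t/\varepsilon}$ that turns $e^{-tA}$ into $e^{-tB}$. The key elementary fact I would establish first is that $(1-\lambda)^{-n} = e^{t/\varepsilon} + \bigO(\tau)$: taking logarithms, $-n\log(1-\lambda) = n(\lambda + \bigO(\lambda^2)) = n\tau/\varepsilon + n\,\bigO(\tau^2)$, and since $n\tau \to t$ (bounded) while $n = \bigO(1/\tau)$, the error is $\bigO(\tau)$. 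This, combined with $e^{-n\tau A} = e^{-tA} + \bigO(\tau)$ (from $n\tau - t \in [0,\tau)$ and local Lipschitz continuity of the matrix exponential), gives claim (i) upon noting $e^{t/\varepsilon}e^{-tA} = e^{-tB}$ since $A$ and $\varepsilon^{-1}I$ commute.

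\textbf{For claim (ii)}, I would recognise the sum $\sum_{k=1}^n (1-\lambda)^{-k} e^{-(k-1)\tau A} w$ as a matrix geometric series. Writing $w = -\tfrac12\lambda\mathbf{1} + A^{-1}(I - e^{-\tau A})f$, the factor $A^{-1}(I-e^{-\tau A}) = \tau I + \bigO(\tau^2)$ (by the same monotone-eigenvalue estimate used in Theorem~\ref{fLyapthm}), so $w = -\tfrac{\tau}{2\varepsilon}\mathbf{1} + \tau f + \bigO(\tau^2)$, i.e. $w = \tau(f - \tfrac{1}{2\varepsilon}\mathbf{1}) + \bigO(\tau^2)$. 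The geometric sum $\sum_{k=1}^n (1-\lambda)^{-k} e^{-(k-1)\tau A}$ evaluates via the closed form for $\sum r^k C^{k-1}$ with $r = (1-\lambda)^{-1}$ and $C = e^{-\tau A}$; I would sum it exactly as $(1-\lambda)^{-1}\bigl(I - (1-\lambda)^{-n}e^{-n\tau A}\bigr)\bigl(I - (1-\lambda)^{-1}e^{-\tau A}\bigr)^{-1}$ and then pass to the limit. The prefactor $\tau$ in $w$ multiplied against a sum that is $\bigO(1/\tau)$ in magnitude yields an $\bigO(1)$ limit; careful bookkeeping shows the limit is $B^{-1}(I - e^{-tB})(f - \tfrac{1}{2\varepsilon}\mathbf{1})$, which is exactly the stated expression. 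Here I would use $(I - (1-\lambda)^{-1}e^{-\tau A}) = \tau B + \bigO(\tau^2)$ to produce the $B^{-1}$, invoking $\varepsilon^{-1}\notin\sigma(A)$ so that $B$ is invertible.

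\textbf{Claim (iii)} is the most delicate and I expect it to be \textbf{the main obstacle}, because the sum $\sum_{k=1}^n$ runs over all $\beta_k$, whose number grows like $1/\tau$, so one cannot simply replace each summand by its limit and hope the accumulated error stays $\bigO(\tau)$. The strategy is to compare the discrete kernel $(1-\lambda)^{-(n-k)} e^{-(n-k)\tau A}$ termwise against $e^{-(n-k)\tau B}$, writing $(1-\lambda)^{-(n-k)} e^{-(n-k)\tau A} = e^{-(n-k)\tau B} + E_{n-k}$ and showing $\|E_{j}\| = \bigO(j\tau^2)$ uniformly; since $\tfrac{\lambda}{1-\lambda} = \lambda + \bigO(\tau^2)$ and each $\|\beta_k\|_\V \le \tfrac12\|\mathbf 1\|_\V$ is uniformly bounded (by Theorem~\ref{betathm}), the total error is $\lambda \sum_{j=0}^{n-1}\|E_j\|\,\bigO(1) = \lambda\,\bigO(n \cdot n\tau^2) = \bigO(\tau)$ because $n\tau$ is bounded and $\lambda = \tau/\varepsilon$. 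The single-step estimate $\|E_j\| = \bigO(j\tau^2)$ itself follows by factoring $(1-\lambda)^{-j}e^{-j\tau A} - e^{-j\tau B}$ and using that the per-step discrepancy between $(1-\lambda)^{-1}e^{-\tau A}$ and $e^{-\tau B}$ is $\bigO(\tau^2)$, accumulated over $j$ steps. Finally, summing (i)--(iii) and collecting the $\bigO(\tau)$ remainders gives \eqref{fSDsoln2} directly from \eqref{fSDsoln}.
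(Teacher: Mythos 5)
Your proposal is correct and follows essentially the same route as the paper: the same term-by-term decomposition, the same key scalar estimate $(1-\lambda)^{-n}=e^{n\lambda}+\bigO(\tau)$, the same geometric-series closed form producing $B^{-1}$ (via $(1-\lambda)I-e^{-\tau A}=\tau B+\bigO(\tau^2)$ and $\varepsilon^{-1}\notin\sigma(A)$), and the same uniform bounds $\|e^{-\ell\tau A}\|\leq 1$, $\|\beta_k\|_\V\leq\tfrac12\|\mathbf{1}\|_\V$ in part (iii). The only cosmetic differences are that you approximate $w$ by $\tau(f-\tfrac{1}{2\varepsilon}\mathbf{1})$ up front rather than carrying the exact factor $A^{-1}(I-e^{-\tau A})$, and in (iii) you bound the scalar discrepancies termwise by $\bigO(j\tau^2)$ where the paper sums them exactly as a difference of geometric series; both yield the same $\bigO(\tau)$ conclusion.
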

\begin{proof}
Let $n\tau - t=:\eta_n = \bigO(\tau)$. Note that $e^{\eta_nX}= I + \bigO(\tau)$ for any bounded matrix $X$.  

Note that $\bigO(\tau)$ is the same as $\bigO(\lambda)$, and also that, for bounded (in $\tau$) invertible matrices $X$ and $Y$ with bounded (in $\tau$) inverses, $X = Y + \bigO(\tau)$ if and only if $X^{-1} = Y^{-1} +\bigO(\tau)$.\footnote{Suppose $X^{-1} = Y^{-1} +\bigO(\tau)$. Then $X = (Y^{-1} +\bigO(\tau))^{-1} = Y(I+\bigO(\tau))^{-1}=Y(I+\bigO(\tau)) = Y +\bigO(\tau)$.} 
\begin{enumerate}[i.]
\item $||(1-\lambda)^{-n}e^{-n\tau A}u_0 - e^{-tB}u_0||_\V \leq ||(1-\lambda)^{-n}e^{-n\tau A}- e^{-tB}||\cdot||u_0||_\V$, so it suffices to consider  $(1-\lambda)^{-n}e^{-n\tau A}- e^{-tB}$. Since $(1-\lambda)^{-n} = e^{n\lambda} +\bigO(\tau^2)$ we infer that 
\[
(1-\lambda)^{-n}e^{-n\tau A} = e^{t/\varepsilon} e^{\eta_n/\varepsilon} e^{-t A}e^{-\eta_n A} +\bigO(\tau^2) = e^{-tB} +\bigO(\tau).
\]
\item We note that \[\sum_{k=1}^n (1-\lambda)^{-k}e^{-(k-1)\tau A} = ((1-\lambda)I-e^{-\tau A})^{-1}(I-(1-\lambda)^{-n}e^{-n\tau A})= ((1-\lambda)I-e^{-\tau A})^{-1}(I-e^{-tB})+\bigO(\tau). \] We next consider each term of $w$ individually. First, we seek to show that  
\[((1-\lambda)I-e^{-\tau A})^{-1}(I-e^{-tB})A^{-1}(I-e^{-\tau A})f = B^{-1}(I-e^{-t B})f + \bigO(\tau)\]
so it suffices to show that  
\[((1-\lambda)I-e^{-\tau A})^{-1} A^{-1}(I-e^{-\tau A}) = B^{-1} + \bigO(\tau).\]
This holds if and only if 
\[ \begin{split} B &= ((1-\lambda)I-e^{-\tau A}) A(I-e^{-\tau A})^{-1} +\bigO(\tau)\\
&= A -\lambda A (I-e^{-\tau A})^{-1} +\bigO(\tau)\\
&= A -\varepsilon^{-1}\tau A \left(\tau A -\frac{1}{2}\tau^2 A^2 +-\cdots\right)^{-1}+\bigO(\tau)\\
&= A -\varepsilon^{-1}\left(I -\frac{1}{2}\tau A+-\cdots\right)^{-1}+\bigO(\tau)\\
&= A -\varepsilon^{-1}I+\bigO(\tau)
\end{split} \]
and since $B= A -\varepsilon^{-1}I$ the result follows. Next we seek to show that
\[
 ((1-\lambda)I-e^{-\tau A})^{-1}(I-e^{-tB})\frac{1}{2}\lambda\mathbf{1} = \frac{1}{2\varepsilon}B^{-1}(I-e^{-tB})\mathbf{1} +\bigO(\tau)
\]
so it suffices to show that  
\[
 ((1-\lambda)I-e^{-\tau A})^{-1}\tau = B^{-1} +\bigO(\tau)
\]
which holds if and only if 
\[
B =\tau^{-1} ((1-\lambda)I-e^{-\tau A}) +\bigO(\tau) = A -\varepsilon^{-1}I+\bigO(\tau)
\]
and since $B= A -\varepsilon^{-1}I$ the result follows.
\item We follow \cite[Proposition 5.1]{Budd} and consider the difference 
\[\begin{split}&\left |\left| \frac{\lambda}{1-\lambda}\sum_{k=1}^n (1-\lambda)^{-(n-k)}e^{-(n-k)\tau A}\beta_k - \lambda\sum_{k=1}^n e^{-(n-k)\tau B}\beta_k \right|\right|_\V\\
&=\lambda \left |\left|\sum_{k=1}^n\left( (1-\lambda)^{-(n-k+1)}-e^{(n-k)\lambda}\right)e^{-(n-k)\tau A}\beta_k \right|\right|_\V\\
&=\lambda \left |\left|\sum_{\ell=0}^{n-1}\left( (1-\lambda)^{-(\ell+1)}-e^{\ell\lambda}\right)e^{-\ell\tau A}\beta_k \right|\right|_\V\\
&\leq\lambda\sum_{\ell=0}^{n-1}\left( (1-\lambda)^{-(\ell+1)}-e^{\ell\lambda}\right)\left |\left|e^{-\ell\tau A}\beta_k \right|\right|_\V\:\:\text{as $(1-\lambda)^{-(\ell+1)}-e^{\ell\lambda}\geq 0$}\\
&\leq \frac{1}{2}\lambda||\mathbf{1}||_\V\sum_{\ell=0}^{n-1}\left( (1-\lambda)^{-(\ell+1)}-e^{\ell\lambda}\right)\:\:\text{as $||e^{-\ell\tau A}||\leq 1$ and $||\beta_k||_\V\leq \frac{1}{2}||\mathbf{1}||_\V$}\\
&=  \frac{1}{2}\lambda||\mathbf{1}||_\V\left(\frac{(1-\lambda)^{-n}-1}{1-(1-\lambda)}-\frac{e^{n\lambda}-1}{e^{\lambda}-1} \right)\\
&=\frac{1}{2}||\mathbf{1}||_\V\left((1-\lambda)^{-n} -e^{n\lambda} \right) + \bigO(\tau)\:\: \text{as $\lambda/(e^\lambda -1) = 1 + \bigO(\tau)$}\\
&=\bigO(\tau)
\end{split}\]
as desired.
\end{enumerate}
\vspace{-1.5\baselineskip}
\end{proof}
Following \cite{Budd} we define the piecewise constant function $z_\tau:[0,\infty)\rightarrow\V$
\[z_\tau(s): =\begin{cases} e^{\tau B}\beta^{[\tau]}_1, & 0\leq s \leq \tau\\
									e^{k\tau B} \beta^{[\tau]}_k,	& (k-1)\tau<s\leq k\tau \text{ for } k\in\mathbb{N}						
\end{cases}\]
and the function \[\gamma_\tau(s):= e^{-sB} z_\tau(s)=\begin{cases} e^{(\tau-s)B}\beta^{[\tau]}_1, & 0\leq s \leq \tau\\
								e^{(k\tau-s)B} \beta^{[\tau]}_k,	& (k-1)\tau<s\leq k\tau			\text{ for } k\in\mathbb{N}				
\end{cases}\]
following the bookkeeping notation of \cite{Budd} of using the superscript $[\tau]$ to keep track of the time-step governing $u_n$ and $\beta_n$.
Next, we have weak convergence of $z$, up to a subsequence, as in \cite{Budd}.
\begin{thm}
\label{convthm}
For any sequence $\tau^{(0)}_n\downarrow 0$ with $\tau^{(0)}_n<\varepsilon$ for all $n$,  there exists a function $z:[0,\infty) \rightarrow \V$ and a subsequence $\tau_n$ such that $z_{\tau_n}$ converges weakly to $z$ in $L^2_{loc}$. It follows that\emph{:}
\begin{enumerate}[\emph{(}A\emph{)}]
\item $\gamma_{\tau_n} \rightharpoonup \gamma$ in $L^2_{loc}$, where $\gamma(s) = e^{-sB}z$.
\item For all $t\geq 0$, \[\int_0^t z_{\tau_n}(s)\; ds \rightarrow \int_0^t z(s)\; ds.\]
\item Passing to a further subsequence of $\tau_n$, we have strong convergence of the Cesàro sums, i.e. for all bounded $T\subseteq[0,\infty)$ \begin{align*}&\frac{1}{N}\sum_{n=1}^N z_{\tau_n} \rightarrow z &\text{and} &&\frac{1}{N}\sum_{n=1}^N \gamma_{\tau_n} \rightarrow \gamma && \text { in } L^2(T;\V)\end{align*} as $N\rightarrow \infty$. 
\end{enumerate}
\end{thm}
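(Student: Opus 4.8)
The overall plan is to produce the weak limit $z$ by a Hilbert-space compactness argument and then to read off (A)--(C) from elementary properties of weak convergence, from boundedness of the multiplication operator $g\mapsto(s\mapsto e^{-sB}g(s))$, and from the Banach--Saks theorem. The first task is a uniform bound: I would show that for each fixed $t>0$ the family $\{z_\tau\}$ is bounded in $L^2([0,t];\V)$ independently of $\tau$. Two ingredients feed into this. By Theorem \ref{obsMMthm} (cf. Fig.~\ref{SDfig}) each SDIE multiplier satisfies $\beta^{[\tau]}_k\in\V_{[-1/2,1/2]}$, whence $\|\beta^{[\tau]}_k\|_\V\leq\tfrac12\|\mathbf{1}\|_\V$; and on the part of $((k-1)\tau,k\tau]$ meeting $[0,t]$ one has $k\tau\leq t+\tau<t+\varepsilon$, so, $B$ being self-adjoint, $\|e^{k\tau B}\|\leq e^{(t+\varepsilon)\|B\|}$ uniformly in $\tau$. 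Since $z_\tau(s)=e^{k\tau B}\beta^{[\tau]}_k$ on that interval, $\|z_\tau(s)\|_\V$ is bounded on $[0,t]$ by a constant depending only on $t,\varepsilon,\|B\|$, and the $L^2$ bound follows by integrating. Each $L^2([0,m];\V)$ is a Hilbert space, in which bounded sequences are weakly sequentially precompact; applying this successively on $[0,1],[0,2],\dots$ to $\{z_{\tau^{(0)}_n}\}$ and taking a diagonal subsequence yields one subsequence $\tau_n$ and a limit $z\in L^2_{loc}([0,\infty);\V)$ with $z_{\tau_n}\rightharpoonup z$ in every $L^2([0,m];\V)$, i.e. in $L^2_{loc}$.

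For (A) I would introduce, on each $L^2([0,m];\V)$, the bounded linear operator $\Phi$ defined by $(\Phi g)(s):=e^{-sB}g(s)$, whose norm is at most $e^{m\|B\|}$ because $\|e^{-sB}\|\leq e^{s\|B\|}$. Bounded linear operators are weak-to-weak continuous, and $\gamma_\tau=\Phi z_\tau$ by definition, so $z_{\tau_n}\rightharpoonup z$ immediately gives $\gamma_{\tau_n}\rightharpoonup\Phi z=\gamma$ in $L^2_{loc}$, with $\gamma(s)=e^{-sB}z(s)$ as claimed. For (B), fix $t\geq0$ and $v\in\V$ and test the weak convergence on $[0,m]$ (any $m\geq t$) against the fixed element $s\mapsto v\,\chi_{[0,t]}(s)$ of $L^2([0,m];\V)$: this gives $\int_0^t\ip{z_{\tau_n}(s),v}\,ds\to\int_0^t\ip{z(s),v}\,ds$, i.e. $\ip{\int_0^t z_{\tau_n}(s)\,ds,v}\to\ip{\int_0^t z(s)\,ds,v}$. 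As $v\in\V$ is arbitrary and $\V$ is finite-dimensional, this is exactly the asserted convergence $\int_0^t z_{\tau_n}\,ds\to\int_0^t z\,ds$ in $\V$.

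Part (C) is where the real work lies and is the step I expect to be the main obstacle, since I must upgrade weak convergence to strong convergence of the Cesàro averages and do so with a single subsequence valid for all bounded $T$ at once. The tool is the Banach--Saks theorem: in the Hilbert space $L^2([0,m];\V)$ the weakly convergent sequence $z_{\tau_n}$ has a subsequence whose Cesàro averages converge strongly to $z$. The subtlety is the simultaneity over $m$. I would use the standard construction, choosing the subsequence so that the pairwise inner products $|(z_{\tau_{n_j}}-z,\,z_{\tau_{n_k}}-z)_{t\in[0,m]}|$ decay like $1/\max(j,k)$; a short computation then shows that not only this subsequence but every further subsequence has Cesàro averages converging strongly to $z$. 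This inheritance is precisely what legitimises diagonalising over $m=1,2,\dots$: the resulting subsequence has strongly convergent Cesàro averages in every $L^2([0,m];\V)$, hence in $L^2(T;\V)$ for every bounded $T$. Finally the statement for $\gamma$ comes for free, since $\frac1N\sum_{n=1}^N\gamma_{\tau_n}=\Phi\big(\frac1N\sum_{n=1}^N z_{\tau_n}\big)$ and the bounded operator $\Phi$ maps strongly convergent sequences to strongly convergent sequences, giving $\frac1N\sum_{n=1}^N\gamma_{\tau_n}\to\Phi z=\gamma$ in $L^2(T;\V)$.
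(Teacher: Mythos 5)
Your argument is correct and is essentially the standard one that the paper itself relies on by deferring to \cite[Proposition 5.2, Corollary 5.3]{Budd} \emph{mutatis mutandis}: a uniform $L^2_{loc}$ bound from $\beta^{[\tau]}_k\in\V_{[-1/2,1/2]}$ and the boundedness of $e^{k\tau B}$ on bounded time intervals, weak sequential compactness plus a diagonal argument, weak-to-weak continuity of the bounded multiplication operator $g\mapsto e^{-sB}g(s)$ for (A) and (B), and a hereditary Banach--Saks subsequence diagonalised over $m$ for (C). You have supplied in full the details the paper leaves to the citation, including the one genuinely delicate point (a single Ces\`aro subsequence valid for all bounded $T$), so nothing further is needed.
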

\begin{proof}
Follows as in \cite[Proposition 5.2]{Budd} and \cite[Corollary 5.3]{Budd} \emph{mutatis mutandis}.
\end{proof}
We thus infer convergence of the SDIE iterates as in \cite{Budd}. Taking $\tau$ to zero along the sequence $\tau_n$, we can define for all $t\geq 0$: 
\be
\label{uhat}\hat u(t) := \lim_{n\rightarrow\infty, m =\ceil{t/\tau_n}} u^{[\tau_n]}_m.
\ee
By the above discussion, we can rewrite this as:
\[ \begin{split}\hat u(t) &=  e^{-tB}u_0 +  B^{-1}(I-e^{-t B})f-\frac{1}{2\varepsilon}B^{-1}(I-e^{-t B})\mathbf{1}+ \lim_{n\rightarrow\infty} \frac{\tau_n}{\varepsilon}\sum_{k=1}^m e^{-(m-k)\tau_n B}\beta^{[\tau_n]}_k \\
&= e^{-tB}u_0 +  B^{-1}(I-e^{-t B})f-\frac{1}{2\varepsilon}B^{-1}(I-e^{-t B})\mathbf{1}+ \frac{1}{\varepsilon} \lim_{n\rightarrow\infty}e^{-m\tau_nB}\int_0^{m\tau_n} z_{\tau_n}(s)\; ds.
\end{split}\]
Next, note that $m\tau_n = \tau_n\ceil{t/\tau_n} =: t + \eta_n$ where $\eta_n\in[0,\tau_n)$. Therefore 
\[\begin{split}
&\lim_{n\rightarrow\infty}e^{-m\tau_nB}\int_0^{m\tau_n} z_{\tau_n}(s)\; ds = \lim_{n\rightarrow\infty}e^{-\eta_nB}e^{-tB}\int_0^{t} z_{\tau_n}(s)\; ds + e^{-\eta_nB}e^{-tB}\int_t^{t+\eta_n} z_{\tau_n}(s)\; ds\\
&=\lim_{n\rightarrow\infty}e^{-tB}\int_0^{t} z_{\tau_n}(s)\; ds + e^{-tB}\int_t^{t+\eta_n} z_{\tau_n}(s)\; ds \:\: \text{ as $e^{-\eta_nB}=I +\bigO(\tau_n)$}\\
&=\lim_{n\rightarrow\infty}e^{-tB}\int_0^{t} z_{\tau_n}(s)\; ds \:\: \text{ as $z_{\tau_n}(s)$ is bounded on $[t,t+\max_{n'} \eta_{n'}]$  uniformly in $n$}\\
&=e^{-tB}\int_0^t z(s)\; ds \:\:\text{ by Theorem \ref{convthm}(B)}.
\end{split}\]
So we have that 
\be\label{uhatsoln} \hat u(t) = e^{-tB}u_0 +  B^{-1}(I-e^{-t B})f-\frac{1}{2\varepsilon}B^{-1}(I-e^{-t B})\mathbf{1}+ \frac{1}{\varepsilon}\int_0^t e^{-(t-s)B}\gamma(s) \; ds.
\ee
Note the similarity between \eqref{uhatsoln} and the explicit form for ACE solutions \eqref{fACEsoln}. Thus, to prove that $\hat u$ is a solution to \eqref{fACE2} it suffices to show that: 
\begin{enumerate}[(a)]
\item $\hat u(t) \in \V_{[0,1]}$ for all $t\geq 0$,
\item $\hat u \in H^1_{loc}([0,\infty);\V) \cap  C^0([0,\infty);\V)$, and
\item $\gamma(t) \in \mathcal{B}(\hat u(t))$ for a.e. $t\geq 0$.
\end{enumerate}
These results follow as in \cite{Budd}. Item (a) follows immediately from the fact that for all $n$, $u_m^{[\tau_n]}\in\V_{[0,1]}$.  

Towards (b), note that each term in \eqref{uhatsoln} except for the integral is $C^\infty([0,\infty);\V)$, and that $\int_0^t z(s)\; ds$ is continuous since $z$ is locally bounded as a weak limit of locally uniformly bounded  functions. Hence $\hat u$ is continuous. By (a), $\hat u$ is bounded so is locally $L^2$. Finally, it is easy to check that $\hat u$ has weak derivative 
\[
\frac{d\hat u}{dt} = -Be^{-tB}u_0 + e^{-tB}\left(f -\frac{1}{2\varepsilon}\mathbf{1}\right) + \frac{1}{\varepsilon}e^{-tB}z(t) -\frac{1}{\varepsilon}Be^{-tB}\int_0^t z(s)\; ds.
\]
This is locally $L^2$ since (for $T$ a bounded interval) $B$ and $e^{-tB}$ are bounded operators from $L^2(T;\V)$ to $L^2(T;\V)$, $z$ is a weak limit of locally $L^2$ functions so is locally $L^2$, and  $\int_0^t z(s)\; ds$ is continuous so is locally bounded.

Towards (c), by Theorem \ref{convthm}(C) and \cite[p. 25]{Budd} \emph{mutatis mutandis} there is a sequence $N_k \rightarrow \infty$, independent of $t$, with 
\[
\gamma(t) = \lim_{k\rightarrow\infty}  \frac{1}{N_k} \sum_{n=1}^{N_k} \beta_m^{[\tau_n]}
\] 
for a.e. $t\geq 0$. Then, at each such $t$, $\gamma(t) \in \mathcal{B}(\hat u(t))$  follows from  $u^{[\tau_n]}_m\rightarrow\hat u(t)$ and $\beta_m^{[\tau_n]}\in \mathcal{B}( u^{[\tau_n]}_m)$ as in \cite[p. 25]{Budd}. Hence we can infer the following convergence theorem.
\begin{thm}[\text{Cf. \cite[Theorem 5.4]{Budd}}]
\label{SDlimit}
For any given $u_0\in\V_{[0,1]}$, $\varepsilon>0$ \emph{(}with $\varepsilon^{-1}\notin\sigma(A)$\emph{)} and $\tau_n\downarrow 0$, there exists a subsequence $\tau'_n$ of $\tau_n$ with $\tau'_n<\varepsilon$ for all $n$, along which the SDIE iterates $(u^{[\tau'_n]}_m,\beta^{[\tau'_n]}_m)$ given by \eqref{fSD} with initial state $u_0$ converge to the ACE solution with initial condition $u_0$ in the following sense\emph{:} For each $t\geq 0$, as $n\rightarrow\infty$ and $m=\ceil{t/\tau'_n}$, $u^{[\tau'_n]}_m\rightarrow \hat u(t)$, and there is a sequence $N_k\rightarrow\infty$ such that for almost every $t\geq 0$, $\frac{1}{N_k}\sum_{n=1}^{N_k}\beta^{[\tau'_n]}_m\rightarrow \gamma(t)$, where $(\hat u,\gamma)$ is the solution to \eqref{fACE2} with $\hat u(0) = u_0$. 
\end{thm}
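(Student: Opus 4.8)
The plan is to assemble the machinery developed above into the single convergence statement. First I would discard finitely many early terms of $\tau_n$ to obtain a subsequence with $\tau'_n<\varepsilon$ for all $n$, which is possible since $\tau_n\downarrow 0$; this ensures $\lambda=\tau'_n/\varepsilon\in[0,1)$, so that the explicit term \eqref{fSDsoln} and its asymptotic expansion \eqref{fSDsoln2} are available. Along this subsequence I would apply Theorem \ref{convthm} to pass to a further subsequence (not relabelled) on which $z_{\tau'_n}\rightharpoonup z$ weakly in $L^2_{loc}$, whence $\gamma_{\tau'_n}\rightharpoonup\gamma$ by (A), the running integrals $\int_0^t z_{\tau'_n}(s)\,ds$ converge by (B), and---after one more passage---the Cesàro averages of $z_{\tau'_n}$ and $\gamma_{\tau'_n}$ converge strongly by (C). Given (B) and the expansion \eqref{fSDsoln2}, the limit \eqref{uhat} defining $\hat u(t)$ exists for every $t\geq 0$, and the computation immediately preceding the theorem identifies $\hat u$ with the explicit expression \eqref{uhatsoln}.

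The heart of the proof is then to verify that the pair $(\hat u,\gamma)$ actually solves \eqref{fACE2}. Since \eqref{uhatsoln} coincides with the explicit-form solution \eqref{fACEsoln}, I would invoke the equivalence in Theorem \ref{fACEthm}(f): it suffices to check (a) $\hat u(t)\in\V_{[0,1]}$ for all $t$, (b) the regularity $\hat u\in H^1_{loc}([0,\infty);\V)\cap C^0([0,\infty);\V)$ together with the bound $\gamma(t)\in\V_{[-1/2,1/2]}$, and (c) $\gamma(t)\in\mathcal{B}(\hat u(t))$ for a.e. $t$. Condition (a) follows because each iterate $u^{[\tau'_n]}_m$ lies in the closed set $\V_{[0,1]}$ and $u^{[\tau'_n]}_m\to\hat u(t)$; the regularity in (b) follows by differentiating \eqref{uhatsoln} term-by-term, every summand being smooth except the integral, which is Lipschitz since $z$ is locally bounded as a weak limit of uniformly locally bounded functions; and $\gamma(t)\in\V_{[-1/2,1/2]}$ follows because each $\beta^{[\tau'_n]}_m\in\V_{[-1/2,1/2]}$ (from the SDIE characterisation in Theorem \ref{obsMMthm}) and this convex constraint survives the Cesàro averaging of (C). The initial condition $\hat u(0)=u_0$ then drops out by evaluating \eqref{uhatsoln} at $t=0$.

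I expect (c), the subdifferential membership $\gamma(t)\in\mathcal{B}(\hat u(t))$, to be the main obstacle, since weak convergence of the $\gamma_{\tau'_n}$ cannot be pushed through the nonlinear constraint operator $\mathcal{B}$ directly. The way around this, following the argument cited from \cite{Budd}, is to use the strong Cesàro convergence of Theorem \ref{convthm}(C) together with the observation that, for a \emph{fixed} $t$, all of the iterates $u^{[\tau'_n]}_m$ (with $m=\ceil{t/\tau'_n}$) converge to the \emph{same} limit $\hat u(t)$. One writes $\gamma(t)$ as a strong limit of averages $N_k^{-1}\sum_{n=1}^{N_k}\beta^{[\tau'_n]}_m$, recalls that each $\beta^{[\tau'_n]}_m\in\mathcal{B}(u^{[\tau'_n]}_m)$, and exploits that $\mathcal{B}$ has closed graph with vertexwise convex values: as the base points $u^{[\tau'_n]}_m$ all approach $\hat u(t)$, these averages are asymptotically confined to the closed convex set $\mathcal{B}(\hat u(t))$, forcing $\gamma(t)$ into it. Assembling everything, $(\hat u,\gamma)$ solves \eqref{fACE2} with $\hat u(0)=u_0$, and the two convergence assertions of the theorem are exactly the definition of $\hat u$ via \eqref{uhat} and the Cesàro convergence of the $\beta$-iterates supplied by Theorem \ref{convthm}(C).
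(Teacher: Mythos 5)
Your proposal is correct and follows essentially the same route as the paper: pass to a subsequence with $\tau'_n<\varepsilon$, use the asymptotic expansion \eqref{fSDsoln2} together with Theorem \ref{convthm} to identify the pointwise limit $\hat u$ with the explicit expression \eqref{uhatsoln}, match this against \eqref{fACEsoln} via Theorem \ref{fACEthm}(f), and verify conditions (a)--(c), with the subdifferential membership $\gamma(t)\in\mathcal{B}(\hat u(t))$ obtained exactly as you describe from the strong Cesàro convergence of Theorem \ref{convthm}(C) and the closed-graph, convex-valued structure of $\mathcal{B}$. Your explicit note that $\gamma(t)\in\V_{[-1/2,1/2]}$ survives the averaging is a detail the paper leaves implicit but is consistent with its argument.
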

\begin{cor}\label{SDlimitcor}
Let $u_0\in\V_{[0,1]}$, $\varepsilon>0$, $\varepsilon^{-1}\notin\sigma(A)$ and $\tau_n\downarrow 0$ with $\tau_n<\varepsilon$ for all $n$. Then for each $t\geq 0$, as $n\rightarrow\infty$, $u^{[\tau_n]}_{\ceil{t/\tau_n}}\rightarrow \hat u(t)$.
\end{cor}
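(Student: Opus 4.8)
The plan is to bootstrap the subsequential convergence of Theorem \ref{SDlimit} into convergence of the full sequence, the key point being that the limit $\hat u$ is \emph{uniquely} determined as the solution of \eqref{fACE2} with $\hat u(0)=u_0$, by Theorem \ref{fACEthm}(c). Thus no matter which subsequence of $(\tau_n)$ one passes to, the iterates must converge to this one fixed $\hat u$, and it is only the extraction of a subsequence in Theorem \ref{SDlimit} that needs to be removed.

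Fix $t\geq 0$ and set $x_n := u^{[\tau_n]}_{\ceil{t/\tau_n}}\in\V$; the case $t=0$ is immediate since then $\ceil{t/\tau_n}=0$ and $x_n=u_0=\hat u(0)$, so assume $t>0$. As $\V$ is finite-dimensional it is a metric space, and I would invoke the standard criterion that $x_n\to \hat u(t)$ if and only if every subsequence of $(x_n)$ admits a further subsequence converging to $\hat u(t)$. It therefore suffices to verify this criterion for the single target point $\hat u(t)$.

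To that end, let $(\tau_{n_j})_j$ be an arbitrary subsequence of $(\tau_n)$. It is again a sequence decreasing to $0$ with $\tau_{n_j}<\varepsilon$ for all $j$, so Theorem \ref{SDlimit} applies to it and furnishes a further subsequence $(\tau'_j)$ along which $u^{[\tau'_j]}_{\ceil{t/\tau'_j}}$ converges to the solution of \eqref{fACE2} with initial state $u_0$. By the uniqueness in Theorem \ref{fACEthm}(c) this solution is exactly $\hat u$, so the limit equals $\hat u(t)$. Hence every subsequence of $(x_n)$ has a further subsequence converging to $\hat u(t)$, and the metric-space criterion yields $x_n\to\hat u(t)$, which is the claim.

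The only genuine obstacle is conceptual rather than computational: one must be certain that the limiting trajectory produced by Theorem \ref{SDlimit} does not depend on the subsequence chosen there. This independence is supplied precisely by the uniqueness of solutions to the double-obstacle ACE with fidelity forcing (Theorem \ref{fACEthm}(c)), after which the argument is purely topological and requires no new estimates.
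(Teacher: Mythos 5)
Your argument is correct and is essentially the paper's own proof: both apply Theorem \ref{SDlimit} to an arbitrary subsequence, use the uniqueness of solutions to \eqref{fACE2} from Theorem \ref{fACEthm}(c) to identify the subsubsequential limit as $\hat u$, and conclude via the standard subsequence--subsubsequence criterion (the paper phrases this for the maps $t\mapsto u^{[\tau_n]}_{\ceil{t/\tau_n}}$ in the topology of pointwise convergence, whereas you fix $t$ and work in the metric space $\V$, but this is only a cosmetic difference). No gaps.
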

\begin{proof}
Let $x_n:t\mapsto u^{[\tau_n]}_{\ceil{t/\tau_n}}$ and let $\tau_{n_k}$ be any subsequence of $\tau_n$. Then by the theorem there is a subsubsequence $\tau_{n_{k_l}}$ such that $x_{n_{k_l}}\rightarrow \tilde u$ pointwise where $\tilde u$ is a solution to \eqref{fACE2} with initial condition $\tilde u(0) = u_0$. By Theorem \ref{fACEthm}(c) such solutions are unique, so $\tilde u = \hat u$. Thus there exists $x$ (in particular, $x=\hat u$) such that every subsequence of $x_n$ has a convergent subsubsequence with limit $x$. It follows by a standard fact of topological spaces that $x_n \rightarrow \hat u$ pointwise as $n\rightarrow\infty$.\footnote{Suppose $x_n\nrightarrow x$. Then there exists $U$ which is open in the topology of pointwise convergence such that $x\in U$ and infinitely many $x_n\notin U$. Choose $x_{n_k}$ such that for all $k$, $x_{n_k}\notin U$. This subsequence has no further subsubsequence converging to $x$.} 
\end{proof}
Finally, we follow \cite{Budd} to use Theorem \ref{SDlimit} to deduce that the Ginzburg--Landau energy monotonically decreases along the ACE trajectories by considering the Lyapunov functional  $H$ defined in \eqref{Lyap}. We also deduce well-posedness of the ACE.  
\begin{prop}[\text{Cf. \cite[Proposition 5.6]{Budd}}]
\label{Htauprop}
Let $H_\tau(u):=\frac{1}{2\tau} H(u)$. Then for $u\in\V_{[0,1]}$
\[
H_\tau(u) = \fGL(u) -\frac{1}{2}\ip{\tilde f, M\tilde f} +\frac{1}{2}\tau\ip{u,Q_\tau (u -2 A^{-1}f)}
\]
where $Q_\tau :=\tau^{-2}(I-\tau A -e^{-\tau A})$. Hence $H_\tau +\frac{1}{2}\ip{\tilde f, M\tilde f}\rightarrow \fGL$ uniformly on $\V_{[0,1]}$ as $\tau\rightarrow 0$, and furthermore if $u_\tau \rightarrow u$ in $\V_{[0,1]}$ then $H_\tau(u_\tau) +\frac{1}{2}\ip{\tilde f, M\tilde f}\rightarrow \fGL(u)$.
\end{prop}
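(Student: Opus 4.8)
The plan is to prove the stated identity by a direct algebraic expansion of both sides, and then obtain the two convergence statements as immediate consequences of that identity.

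First I would unfold all definitions. Since $W$ is the double-obstacle potential \eqref{Wobs}, on $\V_{[0,1]}$ we have $\langle W\circ u,\mathbf{1}\rangle_\V = \tfrac12\ip{u,\mathbf{1}-u}$ and $\tfrac12\|\nabla u\|_\mathcal{E}^2 = \tfrac12\ip{u,\Delta u}$. Substituting $A=\Delta+M$ and $f=M\tilde f$ and expanding the fidelity term $\tfrac12\ip{u-\tilde f,M(u-\tilde f)}$, I would reduce the target left-hand side to
\[
\fGL(u)-\tfrac12\ip{\tilde f,M\tilde f}=\tfrac12\ip{u,Au}+\tfrac1{2\varepsilon}\ip{u,\mathbf{1}-u}-\ip{u,f}.
\]
On the other side, expanding $J$ and $H$, using $\ip{u,\mathbf{1}}-\ip{u,e^{-\tau A}u}=\ip{u,\mathbf{1}-u}+\ip{u,(I-e^{-\tau A})u}$ and $\lambda=\tau/\varepsilon$, I would reduce $H_\tau=\tfrac1{2\tau}H$ to
\[
H_\tau(u)=\tfrac1{2\varepsilon}\ip{u,\mathbf{1}-u}+\tfrac1{2\tau}\ip{u,(I-e^{-\tau A})u}-\tau^{-1}\ip{u,A^{-1}(I-e^{-\tau A})f}.
\]
The identity then follows by inserting the operator relations $\tfrac12\tau Q_\tau=\tfrac12\tau^{-1}(I-e^{-\tau A})-\tfrac12 A$ and $\tau Q_\tau A^{-1}=\tau^{-1}A^{-1}(I-e^{-\tau A})-I$ into the remainder $\tfrac12\tau\ip{u,Q_\tau(u-2A^{-1}f)}$ and checking that the $\tfrac12\ip{u,Au}$ and $\ip{u,f}$ contributions cancel exactly against the corresponding terms in $\fGL(u)-\tfrac12\ip{\tilde f,M\tilde f}$; here I would use that $A$ and $A^{-1}$ are self-adjoint (Proposition \ref{Aspec}) so that the inner products rearrange as needed.

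For the convergence claims I would control the remainder via the expansion $e^{-\tau A}=I-\tau A+\tfrac12\tau^2A^2-\cdots$, which gives $I-\tau A-e^{-\tau A}=\bigO(\tau^2)$ and hence $\tau Q_\tau=\tau^{-1}(I-\tau A-e^{-\tau A})=\bigO(\tau)$; in particular $\|Q_\tau\|$ stays bounded as $\tau\downarrow 0$. On $\V_{[0,1]}$ both $\|u\|_\V\le\|\mathbf{1}\|_\V$ and $\|u-2A^{-1}f\|_\V\le\|\mathbf{1}\|_\V+2\|A^{-1}\|\,\|f\|_\V$ are bounded uniformly in $u$, so
\[
\left|\tfrac12\tau\ip{u,Q_\tau(u-2A^{-1}f)}\right|\le C\tau
\]
with $C$ independent of $u$, which yields uniform convergence of $H_\tau+\tfrac12\ip{\tilde f,M\tilde f}$ to $\fGL$ on $\V_{[0,1]}$. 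For the last claim, if $u_\tau\to u$ in $\V_{[0,1]}$ then $\fGL(u_\tau)\to\fGL(u)$ by continuity of $\fGL$ on $\V_{[0,1]}$ (where $W\circ u=\tfrac12 u(1-u)$ is polynomial), while the remainder evaluated at $u_\tau$ is again $\bigO(\tau)$ because $u_\tau$ remains in the bounded set $\V_{[0,1]}$; summing the two gives the claim.

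The computation is routine, so the only delicate point is the bookkeeping in the algebraic identity — in particular tracking the interplay of $A$, $A^{-1}$ and $e^{-\tau A}$ and verifying that the non-vanishing $\tfrac12\ip{u,Au}$ and $\ip{u,f}$ terms cancel rather than survive. This is precisely where fidelity forcing changes the calculation relative to \cite[Proposition 5.6]{Budd}: there the relevant operator is $\Delta$ rather than $A$ and no $A^{-1}f$ term appears, so the extra fidelity contributions must be shown to cancel cleanly.
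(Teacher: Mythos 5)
Your proposal is correct and follows essentially the same route as the paper's proof: both expand $\fGL$ to $\frac{1}{2}\ip{u,Au}+\frac{1}{2\varepsilon}\ip{u,\mathbf{1}-u}-\ip{u,f}+\frac{1}{2}\ip{\tilde f,M\tilde f}$ and $H_\tau$ to $\frac{1}{2\varepsilon}\ip{u,\mathbf{1}-u}+\frac{1}{2\tau}\ip{u,(I-e^{-\tau A})u}-\frac{1}{\tau}\ip{u,A^{-1}(I-e^{-\tau A})f}$, then match terms via $I-e^{-\tau A}=\tau A+\tau^2Q_\tau$. The only cosmetic differences are that the paper bounds $\|Q_\tau\|\le\frac12\|A\|^2$ by a spectral argument where you use a Taylor expansion, and it proves the final claim by estimating $H_\tau(u_\tau)-H_\tau(u)$ directly where you combine uniform convergence with continuity of $\fGL$; both variants are valid.
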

\begin{proof}
Expanding and collecting terms in \eqref{fGL}, we find that for $u\in\V_{[0,1]}$
\[
\fGL(u) = \frac{1}{2\varepsilon}\ip{u,\mathbf{1}-u} +\frac{1}{2}\ip{u,Au -2f} +\frac{1}{2}\ip{\tilde f, M\tilde f}. 
\]
Then by \eqref{Lyap} and recalling that $\lambda:=\tau/\varepsilon$
\[\begin{split}
H_\tau(u) &= \frac{1}{2\varepsilon}\ip{u,\mathbf{1}-u} +\frac{1}{2\tau}\ip{u,(I-e^{-\tau A})u -2A^{-1}(I-e^{-\tau A})f}\\
&=\frac{1}{2\varepsilon}\ip{u,\mathbf{1}-u} +\frac{1}{2\tau}\ip{u,(\tau A+\tau^2Q_\tau)u -2A^{-1}(\tau A+\tau^2Q_\tau)f}\\
&=\frac{1}{2\varepsilon}\ip{u,\mathbf{1}-u} +\frac{1}{2}\ip{u, Au-2 f} +\frac{1}{2}\tau\ip{u,Q_\tau (u -2A^{-1}f)}.
\end{split}\]
To show the uniform convergence, note that $||u||_\V$ and $||u-2A^{-1}f||_\V$ are uniformly bounded in $u$ for $u\in\V_{[0,1]}$. Thus it suffices to prove that $||Q_\tau||$ is uniformly bounded in $\tau$. But $Q_\tau$ is self-adjoint, and if $\xi_k$ is an eigenvalue of $A$ then $Q_\tau$ has corresponding eigenvalue $\tau^{-2}(1-\tau\xi_k-e^{-\tau\xi_k})\in [-\frac{1}{2}\xi^2_k,0] $, so $||Q_\tau||\leq \frac{1}{2}||A||^2$.  
Finally, it suffices to show that $H_\tau(u_\tau)-H_\tau(u)\rightarrow 0$, since
\[H_\tau(u_\tau) + \frac{1}{2}\ip{\tilde f, M\tilde f}-\fGL(u) = H_\tau(u_\tau) -H_\tau(u) +H_\tau(u) + \frac{1}{2}\ip{\tilde f, M\tilde f} -\fGL(u).\]
Then by the above expression for $H_\tau$ \[H_\tau(u_\tau)-H_\tau(u) = \frac{1}{2}\left\langle u_\tau -u, \frac{1}{\varepsilon}\left(1-u_\tau-u\right)+(A +\tau Q_\tau)(u_\tau +u)-2(I+\tau Q_\tau A^{-1})f \right\rangle_\V\rightarrow 0\] since the right-hand entry in the inner product is bounded uniformly in $\tau$.  
\end{proof}
\begin{thm}[\text{Cf. \cite[Theorem 5.7, Remark 5.8]{Budd}}]\label{fGLthm}
Suppose and $\varepsilon^{-1}\notin\sigma(A)$. Then the ACE trajectory $u$ defined by Definition \ref{ACEdef} has $\fGL(u(t))$ monotonically decreasing in $t$. More precisely\emph{:} for all $t > s \geq 0 $, \be \label{GLstep} 
 \fGL(u(s)) -  \fGL( u(t)) \geq \frac{1}{2(t-s)} \left|\left| u(s) - u(t) \right|\right|_\V^2.
\ee 
Furthermore, this entails an explicit $C^{0,1/2}$ condition for $ u$
\be \label{C0half}
 \left|\left| u(s) - u(t) \right|\right|_\V\leq \sqrt{|t-s|}\sqrt{2\fGL( u(0))}.
\ee
\end{thm}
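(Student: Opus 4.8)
The plan is to recover the continuous-time energy dissipation estimate \eqref{GLstep} as a limit of the discrete Lyapunov descent inequality \eqref{Hstep} for the SDIE scheme, exactly as the solution $u$ was itself obtained as a limit of SDIE iterates in Theorem \ref{SDlimit}. Fix $0\le s<t$ and let $\tau_n\downarrow 0$ with $\tau_n<\varepsilon$, so that $\lambda_n:=\tau_n/\varepsilon\in[0,1)$ and the scheme \eqref{fSD} is well posed. Running the scheme from $u_0$ with step $\tau_n$ (so the iterates below are the step-$\tau_n$ iterates), write $m_n:=\ceil{s/\tau_n}$ and $m_n':=\ceil{t/\tau_n}$; since $t>s$ we have $m_n'>m_n$ for all large $n$. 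Telescoping \eqref{Hstep} between indices $m_n$ and $m_n'$ gives
\[
H(u_{m_n})-H(u_{m_n'})\ \ge\ (1-\lambda_n)\sum_{k=m_n}^{m_n'-1}\bigl\|u_{k+1}-u_k\bigr\|_\V^2,
\]
where $H$ is the $\tau_n$-dependent functional of \eqref{Lyap}. By the discrete Cauchy--Schwarz (power-mean) inequality applied to the increments $u_{k+1}-u_k$, the right-hand sum is bounded below by $(m_n'-m_n)^{-1}\bigl\|u_{m_n'}-u_{m_n}\bigr\|_\V^2$.

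Next I would divide through by $2\tau_n$ and rewrite in terms of $H_{\tau_n}=\tfrac{1}{2\tau_n}H$ from Proposition \ref{Htauprop}, obtaining
\[
H_{\tau_n}(u_{m_n})-H_{\tau_n}(u_{m_n'})\ \ge\ (1-\lambda_n)\,\frac{\bigl\|u_{m_n'}-u_{m_n}\bigr\|_\V^2}{2\,\tau_n(m_n'-m_n)}.
\]
Now pass to the limit $n\to\infty$. By Corollary \ref{SDlimitcor} (using that $\hat u=u$ by the uniqueness in Theorem \ref{fACEthm}(c)) the iterates converge, $u_{m_n}\to u(s)$ and $u_{m_n'}\to u(t)$; hence by the continuity statement of Proposition \ref{Htauprop} the left-hand side tends to $\fGL(u(s))-\fGL(u(t))$, the additive constant $\tfrac12\ip{\tilde f,M\tilde f}$ cancelling in the difference. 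On the right, $1-\lambda_n\to 1$ and $\tau_n(m_n'-m_n)\to t-s$ (since $\tau_n\ceil{s/\tau_n}\to s$ and $\tau_n\ceil{t/\tau_n}\to t$), so the right-hand side tends to $\tfrac{1}{2(t-s)}\bigl\|u(t)-u(s)\bigr\|_\V^2$. This yields \eqref{GLstep}, and in particular the monotone decrease of $\fGL(u(\cdot))$.

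Finally, \eqref{C0half} follows from \eqref{GLstep} by noting that $\fGL\ge 0$ on $\V_{[0,1]}$ (each summand in \eqref{fGL} being non-negative there, as $W\ge 0$ on $[0,1]$ and $M$ is diagonal and non-negative) together with $\fGL(u(s))\le\fGL(u(0))$ from the just-proved monotonicity: then $\bigl\|u(s)-u(t)\bigr\|_\V^2\le 2(t-s)\bigl(\fGL(u(s))-\fGL(u(t))\bigr)\le 2|t-s|\,\fGL(u(0))$. The main obstacle is the simultaneous limit, where one must coordinate the convergence of the iterates (Corollary \ref{SDlimitcor}), the energy convergence under that convergence (Proposition \ref{Htauprop}), and the rounding asymptotics $\tau_n(m_n'-m_n)\to t-s$, checking that the vanishing error terms ($\lambda_n\to0$ and the ceiling corrections) respect the direction of the inequality; the fidelity forcing enters only through the harmless constant $\tfrac12\ip{\tilde f,M\tilde f}$, so the argument otherwise follows \cite{Budd} \emph{mutatis mutandis}.
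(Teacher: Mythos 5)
Your proposal is correct and follows essentially the same route as the paper, which simply defers to \cite[Theorem 5.7, Remark 5.8]{Budd}: telescope the Lyapunov descent inequality \eqref{Hstep}, apply Cauchy--Schwarz to the increments, divide by $2\tau_n$, and pass to the limit using Corollary \ref{SDlimitcor} and Proposition \ref{Htauprop} (whose additive constant cancels in the difference), with \eqref{C0half} then following from non-negativity and monotonicity of $\fGL$. The supporting results you invoke are exactly the ones the paper sets up for this purpose, so there is nothing to add.
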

\begin{proof} The proof is identical to that in \cite[Theorem 5.7]{Budd} and \cite[Remark 5.8]{Budd}.
\end{proof}
\begin{thm}[\text{Cf. \cite[Theorem 3.11]{Budd}}]\label{fACEstab}
Let $u_0,v_0\in\V_{[0,1]}$ define ACE trajectories $ u, v$ by Definition \ref{ACEdef}, and suppose $\varepsilon^{-1}\notin\sigma(A)$. Then, if $\xi_1:=\min\sigma(A)$, then
\be\label{fACEstab1}
|| u(t)- v(t)||_\V \leq e^{-\xi_1 t}e^{t/\varepsilon}||u_0-v_0||_\V.
\ee
\end{thm}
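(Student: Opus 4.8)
The plan is to follow the strategy of \cite[Theorem 3.11]{Budd}: derive a differential inequality for $t\mapsto \|u(t)-v(t)\|_\V^2$ and close it with Grönwall. First I would put both trajectories into the $B$-form used in Theorem \ref{fACEthm}(f). Subtracting $\varepsilon^{-1}u$ through the evolution, \eqref{fACE2} is equivalent to
\[
\frac{du}{dt}(t) + Bu(t) - f + \frac{1}{2\varepsilon}\mathbf{1} = \frac{1}{\varepsilon}\beta(t), \qquad \beta(t)\in\mathcal{B}(u(t)),
\]
with $B:=A-\varepsilon^{-1}I$, and likewise for $v$ with multiplier $\gamma(t)\in\mathcal{B}(v(t))$. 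Setting $w:=u-v$ and subtracting the two equations cancels the inhomogeneous terms $f$ and $\tfrac{1}{2\varepsilon}\mathbf{1}$, leaving, for a.e.\ $t$,
\[
\frac{dw}{dt}(t) + Bw(t) = \frac{1}{\varepsilon}\bigl(\beta(t)-\gamma(t)\bigr).
\]

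Next I would test this identity against $w(t)$ in $\ip{\cdot,\cdot}$. The regularity needed to write $\ip{\tfrac{dw}{dt},w}=\tfrac12\tfrac{d}{dt}\|w\|_\V^2$ is supplied by Theorem \ref{fACEthm}(g), which (using the hypothesis $\varepsilon^{-1}\notin\sigma(A)$) gives $u,v\in C^{0,1}(T;\V)$, so that $\|w(\cdot)\|_\V^2$ is absolutely continuous and differentiable a.e. The term $\ip{Bw,w}=\ip{Aw,w}-\varepsilon^{-1}\|w\|_\V^2$ is controlled from below by the self-adjointness and positive-definiteness of $A$ from Proposition \ref{Aspec}: with $\xi_1=\min\sigma(A)$ one has $\ip{Aw,w}\geq \xi_1\|w\|_\V^2$. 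I expect the sign of the forcing term $\ip{\beta-\gamma,w}$ to be the only non-routine point; it is handled by the monotonicity encoded in $\mathcal{B}$ (equivalently, each $-\mathcal{B}(\cdot)_i=\partial I_{[0,1]}(\cdot)$ is a monotone operator). A short vertexwise case check over the regimes $u_i\in\{0\},(0,1),\{1\}$ and the corresponding regimes for $v_i$ shows $(\beta_i-\gamma_i)(u_i-v_i)\leq 0$ in every case; summing against the non-negative weights $d_i^r$ yields $\ip{\beta-\gamma,w}\leq 0$.

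Combining these gives, for a.e.\ $t$,
\[
\frac{1}{2}\frac{d}{dt}\|w\|_\V^2 = -\ip{Bw,w} + \frac{1}{\varepsilon}\ip{\beta-\gamma,w} \leq \bigl(\varepsilon^{-1}-\xi_1\bigr)\|w\|_\V^2,
\]
and Grönwall's inequality from $\|w(0)\|_\V=\|u_0-v_0\|_\V$, followed by taking square roots, produces exactly \eqref{fACEstab1}. The main obstacle is thus not any individual estimate but justifying the chain rule for $\|w(\cdot)\|_\V^2$ along solutions that satisfy the equation only a.e.; once Theorem \ref{fACEthm}(g) provides the Lipschitz regularity this is routine, and the monotonicity and spectral bounds are elementary. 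Since by Theorem \ref{SDlimit} and Theorem \ref{fACEthm}(c) the solution constructed there is the unique general solution, the estimate then holds for all ACE trajectories in the sense of Definition \ref{ACEdef}.
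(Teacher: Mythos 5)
Your proof is correct, but it takes a genuinely different route from the paper's. The paper proves this estimate by passing to the limit in the discrete stability bound for the SDIE iterates: it fixes $t$, sets $m=\ceil{t/\tau_n}$, invokes Corollary \ref{SDlimitcor} to get $u^{[\tau_n]}_m\rightarrow u(t)$ and $v^{[\tau_n]}_m\rightarrow v(t)$, applies \eqref{fSDstab} to obtain $||u^{[\tau_n]}_m-v^{[\tau_n]}_m||_\V\leq e^{-m\xi_1\tau_n}(1-\tau_n/\varepsilon)^{-m}||u_0-v_0||_\V$, and lets $n\rightarrow\infty$ so that $(1-\tau_n/\varepsilon)^{-m}\rightarrow e^{t/\varepsilon}$. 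You instead argue entirely at the continuum level: subtract the two equations in $B$-form, test against $w=u-v$, use monotonicity of the double-obstacle subdifferential to get $\ip{\beta-\gamma,w}\leq 0$, bound $\ip{Aw,w}\geq\xi_1||w||_\V^2$, and close with Gr\"onwall. All the steps check out (the vertexwise sign check is exactly the monotonicity of $\partial I_{[0,1]}$, and the chain rule for $||w(\cdot)||_\V^2$ already follows from the $H^1_{loc}\cap C^0$ regularity in Definition \ref{ACEdef}, so invoking Theorem \ref{fACEthm}(g) is sufficient but more than needed). The trade-off: the paper's proof is a two-line corollary of machinery it has already built (the discrete Lipschitz estimate and the convergence theorem), and fits its program of deriving continuum properties from the SDIE scheme; your proof is self-contained, mirrors the technique the paper itself uses for the comparison principle in Theorem \ref{fACEthm}(b), does not rely on Theorem \ref{SDlimit} or Corollary \ref{SDlimitcor}, and in fact does not genuinely need the hypothesis $\varepsilon^{-1}\notin\sigma(A)$, which the paper requires only because its convergence corollary does. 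Your closing appeal to uniqueness is harmless but unnecessary for your argument, since it applies directly to any pair of trajectories satisfying Definition \ref{ACEdef}.
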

\begin{proof}
Fix $t\geq 0$ and let $m:= \ceil{t/\tau_n}$. By Corollary \ref{SDlimitcor}, we take $\tau_n\downarrow 0$ such that $u_m^{[\tau_n]}\rightarrow  u(t)$ and $v_m^{[\tau_n]}\rightarrow  v(t)$ as $n\rightarrow\infty$. Then by \eqref{fSDstab}:
\[
||u_m^{[\tau_n]}-v_m^{[\tau_n]}||_\V \leq e^{-m\xi_1\tau_n}(1-\tau_n/\varepsilon)^{-m}||u_0-v_0||_\V
\]
and taking $n\rightarrow\infty$ gives \eqref{fACEstab1}.
\end{proof}

\section{The SDIE scheme as a classification algorithm}\label{classificationsec}

As was noted in the introduction, trajectories of the ACE and of the MBO scheme on graphs can be deployed as classification algorithms, as was originally done in work by Bertozzi and co-authors in \cite{MKB,BF}. In the above, we have shown that the SDIE scheme \eqref{fSD} introduced in \cite{Budd} generalises the MBO scheme into a family of schemes, all of the same computational cost as the MBO scheme, and which as $\tau\downarrow 0$ become increasingly more accurate approximations of the ACE trajectories. In the remainder of this paper, we will investigate whether the use of these schemes can significantly improve on the use of the MBO scheme to segment the ``two cows'' images from \cite{MKB,BF}. We will also discuss other potential improvements to the method of \cite{MKB}. 

\subsection{Groundwork }  

In this section, we will describe the framework for applying graph dynamics to classification problems, following \cite{MKB,BF}. 

The individuals that we seek to classify we will denote as a set $V$, upon which we have some information $x : V\rightarrow \mathbb{R}^q$. For example, in image segmentation $V$ is the pixels of the image, and $x$ is the greyscale/RGB/etc. values at each pixel. Furthermore, we have \emph{labelled reference data} which we shall denote as $Z\subseteq V$ and binary reference labels $\tilde f$ supported on $Z$. Supported on $Z$ we have our fidelity parameter $\mu\in\V_{[0,\infty)}\setminus\{\mathbf{0}\}$, and we recall the notation $M:=\operatorname{diag}(\mu)$ and $f:=M\tilde f$  (recall that the operator diag sends a vector to the diagonal matrix with that vector as diagonal, and also \emph{vice versa}).  

To build our graph, we first construct \emph{feature vectors}  $z:V\rightarrow\mathbb{R}^\ell$. The philosophy behind these is that we want vertices which are ``similar'' (and hence should be similarly labelled) to have feature vectors that are ``close together''. What this means in practice will depend on the application, e.g. \cite{VZ} incorporates texture into the features and \cite{BF,birdspot} give other options.

Next, we construct the weights on the graph by deciding on the edge set $E$ (e.g. $E = V^2 \setminus \{(i,i)\mid i \in V\}$) and for each $ij\in E$ computing $\omega_{ij} := \Omega({z}_i,{z}_j)$ (and for $ij\notin E$, $\omega_{ij}:=0$). There are a number of standard choices for the similarity function $\Omega$, see for example \cite{BF,Yarov,BCM,ZMP}. The similarity function we will use in this paper is the Gaussian function:
\[
\Omega(z,w) := e^{-\frac{||z - w||^2}{\ell\sigma^2}}.
\]

Finally, from these weights we compute the graph Laplacian so that we can employ the graph ODEs discussed in the previous sections. In particular, we compute the \emph{normalised} (a.k.a. random walk) graph Laplacian, i.e. we will henceforth take $r=1$ and so $\Delta = I - D^{-1}\omega$. We will also consider the \emph{symmetric normalised} Laplacian $\Delta_s := I - D^{-1/2}\omega D^{-1/2}$, though this does not fit into the above framework.   This normalisation matters because, as discussed in \cite{BF}, the segmentation properties of diffusion-based graph dynamics are linked to the segmentation properties of the eigenvectors of the corresponding Laplacian. As shown in \cite[Fig. 2.1]{BF}, normalisation vastly improves these segmentation properties. As that figure looked at the symmetric normalised Laplacian, we include Fig. \ref{rwVSsym} to show the difference between the symmetric normalised and the random walk Laplacian. 
\begin{figure}[h]
\centering
\includegraphics[width = 0.35\textwidth]{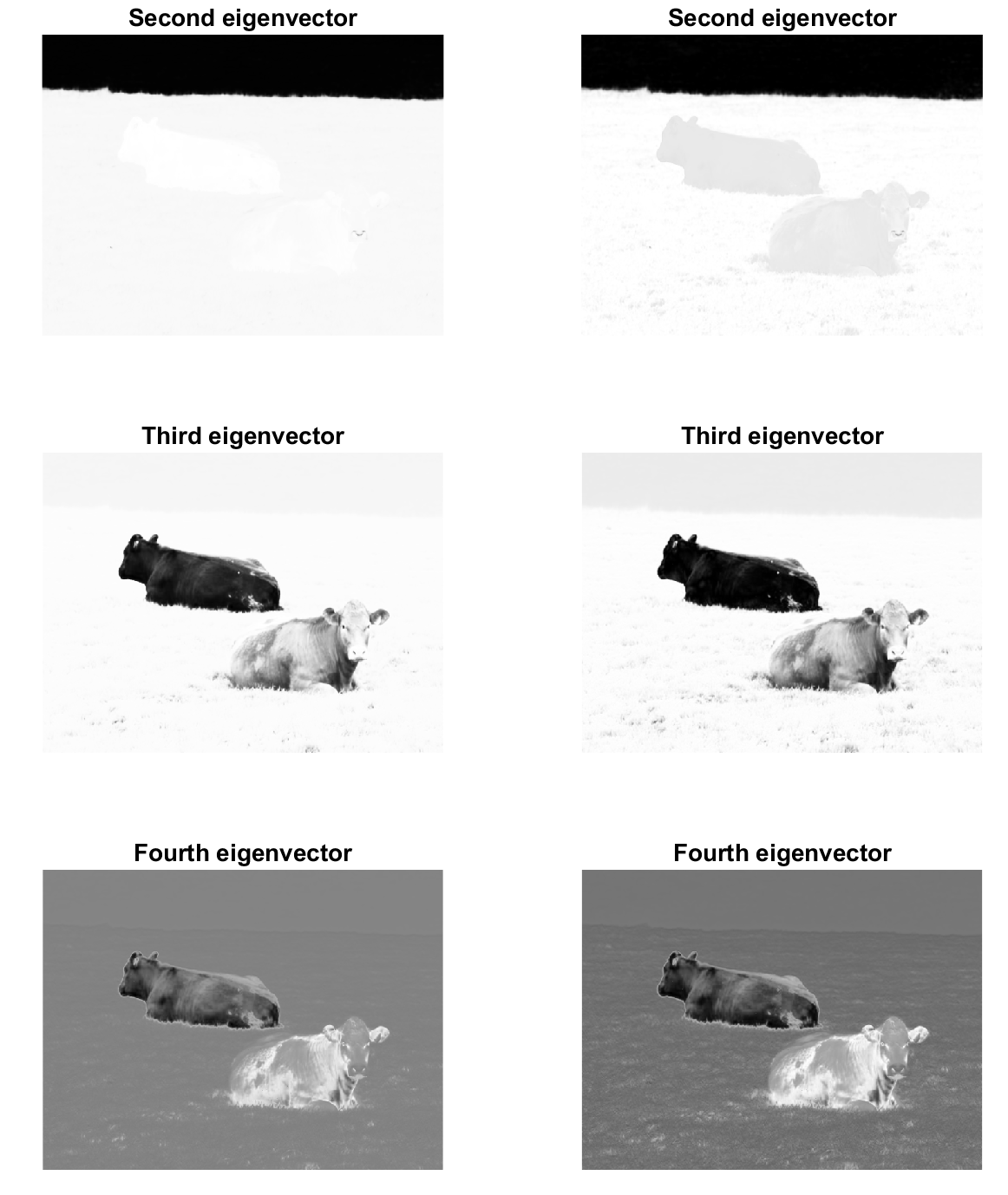}
\caption{Second, third, and fourth eigenvectors of the random walk Laplacian (left) and symmetric normalised Laplacian (right) for the graph built on one of the ``two cows'' images from Section \ref{applicationsec}, computed using Algorithm~\ref{nysQR}.}
\label{rwVSsym}
\end{figure} 
\subsection{The basic classification algorithm}
 For some time step $0<\tau\leq\varepsilon $ note that 
\[
\mathcal{S}_\tau u = e^{-\tau A}u + b
\]
where $b:= A^{-1}(I - e^{-\tau A})f$ is independent of $u$.

\begin{enumerate}
\item \textbf{Input:} Vector $x:V\rightarrow\mathbb{R}^q$, reference data $Z$, and labels $f$ supported on $Z$. 
\item Convert $x$ into feature vectors $z:V\rightarrow\mathbb{R}^\ell$. 
\item Build a weight matrix $\omega = (\omega_{ij})$ on $V^2$ via $\omega_{ij} :=  \Omega({z}_i,{z}_j)$.
\item Compute $A$ and therefore $b$.
\item From some initial condition $u_0$, compute the SDIE sequence $u_n$ until it meets a stopping condition at some $n = N$.
\item\textbf{Output:} $u_N$. 
\end{enumerate}

Unfortunately, as written this algorithm cannot be feasibly run. The chief obstacle is that in many applications $A$ is too large to store in memory, yet we need to quickly compute $e^{-\tau A} u$, potentially a large number of times. We also need to compute $b$ accurately. Moreover, in general $A$ does not have low numerical rank, so it cannot be well approximated by a low-rank matrix.
In the rest of this section we describe our  modifications to this basic algorithm that make it computationally efficient. 
\subsection{Matrix compression and approximate SVDs}

We will need to compress $\Delta$ into something we can store in memory. 
Following \cite{MKB,BF}, we employ the  Nystr\"om extension \cite{Nys,FBCM}. We choose $K\ll |V|$ to be the rank to which we will compress $\Delta$, and choose nonempty Nystr\"om interpolation sets $X_1\subseteq V\setminus Z$ and $X_2 \subseteq Z$ at random such that $|X|=K$ where $X:= X_1\cup X_2$. Then using the function $ij\mapsto \omega_{ij}$ we compute $\omega_{VX}:=\omega(V,X)$ (i.e. $\omega_{VX}:=(\omega_{ij})_{i\in V, j\in X}$) and $\omega_{XX}:=\omega(X,X)$ 
and then the Nystr\"om extension is the approximation:
\[
\omega \approx  \omega_{VX} \omega_{XX}^{-1} \omega_{VX}^T.
\]
Note that this avoids having to compute the full matrix $\omega$ which in many applications is too large to store in memory. We next compute an approximation for the degree vector $d$ and degree matrix $D:=\operatorname{diag}(d)$ of our graph
\begin{align*} 
&d\approx \hat d:=\omega_{V{X}} \omega_{{XX}}^{-1} \omega_{V{X}}^T\mathbf{1}, & D\approx \hat D := \operatorname{diag}\left(\hat d\right).
\end{align*}
We thus approximately normalise $\omega$
\begin{align*}
\tilde \omega &:=  D^{-1/2}\omega D^{-1/2} \approx \hat D^{-1/2}   \omega_{V{X}} \omega_{{XX}}^{-1}  \omega_{V{X}}^T \hat D^{-1/2} =   \tilde  \omega_{V{X}} \omega_{{XX}}^{-1} \tilde  \omega_{V{X}}^T
\end{align*}
where $\tilde  \omega_{V{X}}:= \hat D^{-1/2} \omega_{V{X}}$.

Following \cite{MKB,BF}, we next compute an approximate eigendecomposition of $\tilde\omega$. We here diverge from the method of \cite{MKB,BF}. The method used in those papers requires taking the matrix square root of $\omega_{XX}$, but unless $\omega_{XX}$ is the zero matrix it will not be positive semi-definite.\footnote{It is easy to see that non-zero $\omega_{XX}$ has negative eigenvalues, as it has zero trace.} Whilst this clearly does not prevent the method of \cite{MKB,BF} from working in practice, it is a potential source of error and we found it conceptually troubling. We here present an improved method, adapted from the method from \cite{BK} for computing a singular value decomposition (SVD) from an adaptive cross approximation (ACA) (see \cite{BK} for a definition of ACA).

First, we compute the thin QR decomposition (see \cite[Theorem 5.2.2]{GVL})
\[
\tilde \omega_{V{X}} = QR
\] 
where $Q\in\mathbb{R}^{|V|\times K}$ is orthonormal and $R\in\mathbb{R}^{K\times K}$ is upper triangular.
Next, we compute the eigendecomposition 
\[
R\omega_{{XX}}^{-1}R^T = \Phi \Sigma \Phi^T
\]
where $\Phi\in\mathbb{R}^{K\times K}$ is orthogonal and $\Sigma\in\mathbb{R}^{K\times K}$ is diagonal. It follows that $\tilde \omega$ has approximate eigendecomposition:
\[
\tilde \omega \approx Q\Phi\Sigma \Phi^T Q^T = U_s \Sigma U_s^T 
\]
where $U_s:=Q\Phi$ is orthonormal. This gives an approximate eigendecomposition of the symmetric normalised Laplacian
\[
\Delta_s = I - \tilde\omega \approx U_s(I_{K}-\Sigma)U_s^T = U_s\Lambda U_s^T
\]
where $I_K$ is the $K\times K$ identity matrix and $\Lambda := I_{K}-\Sigma$, and so we get an approximate SVD of the random walk Laplacian
\[
\Delta = D^{-1/2} \Delta_s D^{1/2} \approx U_1\Lambda U_2^T
\]
where $U_1:= \hat D^{-1/2}U_s$ and $U_2:=\hat D^{1/2}U_s$. As in \cite{BK}, it is easy to see that this approach is $\bigO(K|V|)$ in space and $\bigO(K^2|V|
+K^3)$ in time. We summarise this all as Algorithm \ref{nysQR}.
\begin{algorithm}[h]
 \caption{QR decomposition-based Nystr\"om method for computing an approximate SVD of $\Delta$, inspired by \cite{BK}.\label{nysQR}}
\begin{algorithmic}[1]
\Function{Nystr\"omQR}{$ij\mapsto\omega_{ij}, V, Z, K$}\Comment{Computes $U_1,\Lambda$, and $U_2$; $\Delta\approx U_1\Lambda U_2^T$ is an approximate SVD of rank $K$  }
\State $ X_1 = \texttt{random\_subset}(V\setminus Z,K/2) $ \Comment{A random subset of $V\setminus Z$ of size $K/2$}
\State $ X_2 = \texttt{random\_subset}(Z,K/2) $ \Comment{A random subset of $Z$ of size $K/2$}
\State $ {X} = X_1 \cup X_2$ 
\State $\omega_{{XX}}=\omega({X},{X})$ 
\State $\omega_{V{X}}=\omega(V,{X})$ 
\State $\hat d = \omega_{V{X}} \left(\omega_{{XX}}^{-1} \left(\omega_{V{X}}^T\mathbf{1}\right)\right) $ 
\State $ \tilde\omega_{VX} =\hat d^{-1/2} \odot \omega_{VX}$ \Comment{Applying $\odot$ columnwise, i.e. $(\tilde\omega_{V{X}})_{ij} = \hat d_i^{-1/2}(\omega_{V{X}})_{ij} $ }
\State $ [Q,R] = \texttt{thin\_qr}(\tilde \omega_{V{X}}) $\Comment{Computes thin QR decomposition $\tilde \omega_{V{X}}=QR$}
\State $ S = R\omega_{{XX}}^{-1}R^T$ 
\State $ S = (S + S^T)/2 $ \Comment{Corrects symmetry-breaking computational errors}
\State $ [\Phi,\Sigma] = \texttt{eig}(S)$\Comment{Computes eigendecomposition $S = \Phi\Sigma\Phi^T$}
\State $ \Lambda = I_K -\Sigma$ 
\State $ U_s = Q\Phi$ \Comment{Note that $\Delta_s\approx U_s \Lambda U_s^T$}
\State $ U_1 = \hat d^{-1/2} \odot U_s $ \Comment{I.e. $(U_1)_{ij} =  \hat d_i^{-1/2} (U_s)_{ij} $}
\State $  U_2 = \hat d^{1/2} \odot U_s $ \Comment{I.e. $(U_2)_{ij} =  \hat d_i^{1/2} (U_s)_{ij} $}
\State \textbf{return} $U_1,\Lambda,U_2$
\EndFunction
\end{algorithmic}
\end{algorithm}
\subsubsection{Numerical assessment of the matrix compression method}
We consider the accuracy of our Nystr\"om-QR approach for the compression of the symmetric normalised Laplacian $\Delta_s$ built on the simple image in Fig.~\ref{Fig_Image_Error_LR}, containing $|V| = 6400$ pixels, which is sufficiently small that we can compute the true value of $\Delta_s$ to high accuracy.
For $K \in \{50, 100, \ldots, 500\}$, we compare the rank $K$ approximation $U_s\Lambda U_s^T$ with the true $\Delta_s$ in terms of the relative Frobenius distance, i.e. $||U_s\Lambda U_s^T-\Delta_s||_F/||\Delta_s||_F$. 
Moreover, we compare these errors to the errors incurred by other low-rank approximations of $\Delta_s$, namely the  Nystr\"om method suggested by \cite{MKB,BF}, the Halko--Martinsson--Tropp (HMT) method\footnote{The HMT results serve only to give an additional benchmark for the Nystr\"om methods: HMT requires matrix-vector-products with $\Delta_s$, which was infeasible for us in applications. However, as we were finalising this paper we were made aware of the recent work of \cite{BSV}, which may make computing the HMT-SVD of $\Delta_s$ feasible.} \cite{HMT} (a randomised algorithm), and the rank $K$ approximation of $\Delta_s$ obtained by setting all but its $K$ leading singular values to $0$. By the Eckart--Young theorem \cite{EY} (see also \cite[Theorem~2.4.8]{GVL}) the latter is the optimal rank $K$ approximation of $\Delta_s$ with respect to the Frobenius distance.
In addition to the methods' accuracy, we measure their complexity in terms of the elapsed time used for their execution, obtained with an implementation in \textsc{Matlab}R2019a on the set-up described in Section~\ref{appsetup}.

\begin{figure}[ht]
\centering
\includegraphics[width = 0.2\textwidth]{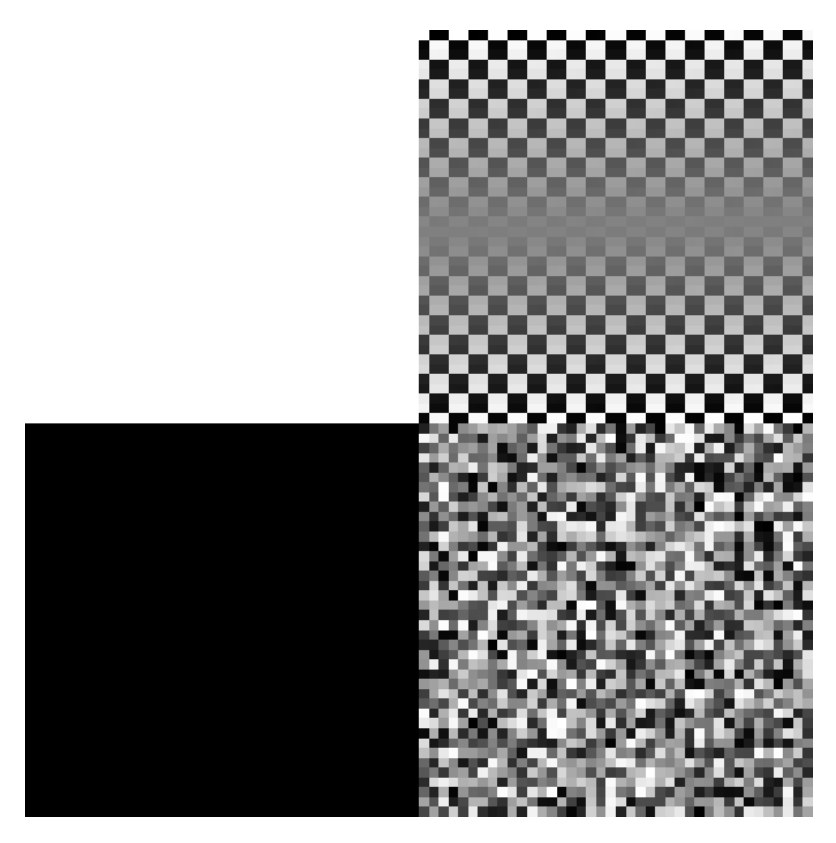}
\caption{The $80\times 80$ image on which the Laplacian $\Delta_s$ is constructed to test the low-rank approximations. }
\label{Fig_Image_Error_LR}
\end{figure}

We report the relative Frobenius distance in the left of Fig.~ \ref{Fig_Timings_Error_LR}. As the Nystr\"om (and HMT) methods are randomised, we repeat the experiments $100$ times and plot the mean error as solid lines and the mean error $\pm$ standard deviation as dotted lines. To expose the difference between the methods for $K \geq 100$, we subtract the SVD error from the other errors and show this difference in the central figure. In the right figure, we compare the complexity of the algorithms in terms of their average runtime. The SVD timing is constant in $K$ as we always computed a full SVD and kept the largest $K$ singular values.

\begin{figure}
\centering
\includegraphics[height=110pt]{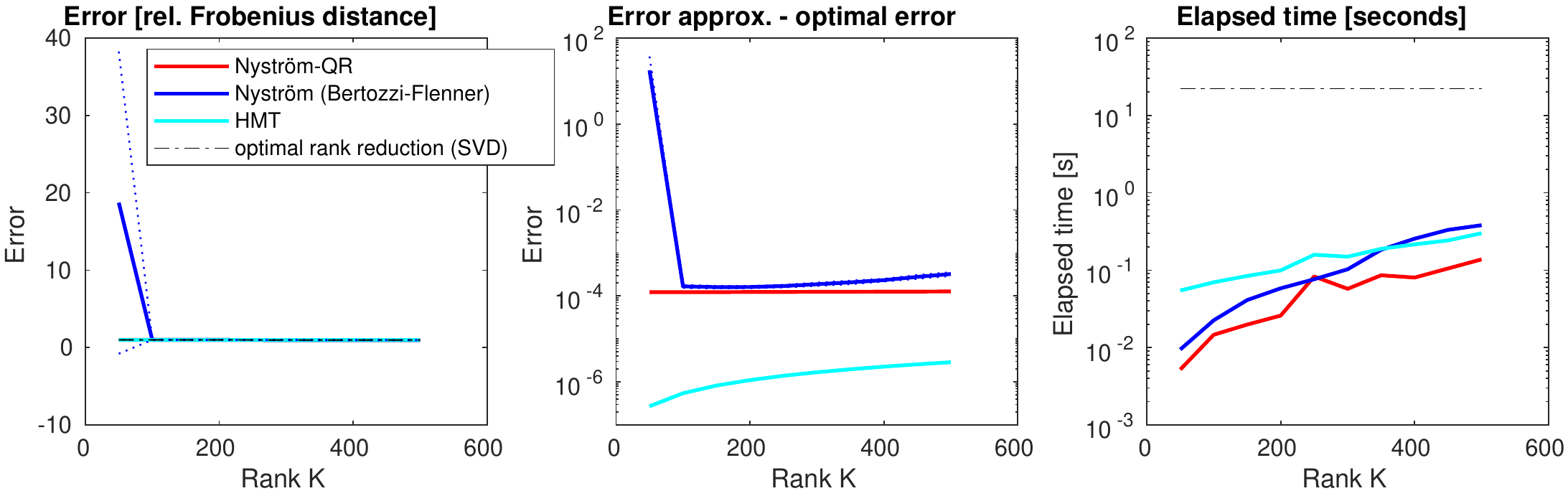}
\caption{Error of the low-rank approximations of $\Delta_s$ in terms of their relative Frobenius distance to the true $\Delta_s$ (left) and translated by the negative optimal error reached with the SVD (centre). Timings of the methods in seconds (right). In the left and the centre figure, the solid lines refer to the mean values and the dotted lines to the means $\pm$ the standard deviations. The lower blue dotted line in the centre figure is not plotted for low $K$ as the mean $-$ standard deviation is negative and cannot be plotted on a log scale; all other solid and dotted lines are plotted but some cannot be distinguished from the others by the eye.}
\label{Fig_Timings_Error_LR}
\end{figure}

We observe that the Nystr\"om-QR method outperforms the Nystr\"om method from \cite{MKB,BF}: it is faster, more accurate, and is stable for small $K$. In terms of accuracy, the Nystr\"om-QR error is equal to only $4.6 \times 10^{-5}$ plus the optimal low-rank error. Notably, this additional error is (almost) constant, indicating that the Nystr\"om-QR method and the SVD converge at a similar rate.
The Nystr\"om-QR randomness has hardly any effect on the error; the standard deviation of the relative error ranges from $8.87 \times 10^{-7}$ $(K=50)$ to $5.00 \times 10^{-8}$ $(K=500)$. By contrast, for the Nystr\"om method from \cite{MKB, BF} we see much more random variation.
\subsection{Implementing the SDIE scheme: a Strang formula method}

To compute the iterates of our SDIE scheme, we will need to compute an approximation for $\mathcal{S}_\tau u_n$. In \cite{MKB}, at each iteration $\mathcal{S}_\tau u_n$ was approximated via a semi-implicit Euler method, which therefore incurred a linear (in the time step of the Euler method, i.e. $\delta t$ in the below notation) error in both the $e^{-\tau A}u_n$ and $b$ terms (plus a spectrum truncation error). In Appendix \ref{MKBapp} we show that the method from \cite{MKB} works by approximating a Lie product formula approximation (see \cite[Theorem 2.11]{Hall}) of $e^{-\tau A}$, therefore we propose as an improvement a scheme that directly employs the superior Strang formula\footnote{We owe the suggestion to use this formula to Arieh Iserles, who also suggested to us the Yoshida method that we consider below.} to approximate $e^{-\tau A}u_n$---with quadratic error (plus a spectrum truncation error). We also consider potential improvements of the accuracy of computing $b$: by expressing $b$ as an integral and using quadrature methods;\footnote{We again thank Arieh Iserles for also making this suggestion.} by expressing $b$ as a solution to the ODE from \eqref{fdiffuse} with initial condition $\mathbf{0}$, and using the Euler method from \cite{MKB} with a very small time step (or a higher-order ODE solver);\footnote{We can afford to do this for $b$, but not generally for the $\mathcal{S}_\tau u_n$, because $b$ only needs to be computed once rather than at each $n$.} or by computing the closed form solution for $b$ directly using the Woodbury identity. We therefore improve on the accuracy of computing $\mathcal{S}_\tau u$ at low cost.

The Strang formula for matrix exponentials \cite{Strang} is given, for $k >0$ a parameter and $\bigO$ relative to the limit $k\rightarrow \infty$, by
\[
e^{X + Y} = (e^{\frac{1}{2}Y/k}e^{X/k}e^{\frac{1}{2}Y/k})^k +\bigO(k^{-2}). 
\]
Given $\Delta \approx  U_1\Lambda U_2^T $ as in Algorithm \ref{nysQR} (the case for $\Delta_s $ is likewise) for any $u\in \V$ we compute (writing $\delta t:= \tau/k$)
\be
\begin{split}
e^{-\tau A}u &= \left(e^{-\frac{1}{2}\tau/k M} e^{-\tau/k \Delta} e^{-\frac{1}{2}\tau/k M}\right)^k u+ \bigO(k^{-2})\\
	           &=  \left(e^{-\frac{1}{2}\delta t M}\left(I + U_1(e^{-\delta t \Lambda}-I_K)U_2^T\right) e^{-\frac{1}{2}\delta t M}\right)^k u+E+ \bigO(\delta t^2)\\
		&= v_k + E+ \bigO(\delta t^2)
\end{split}
\ee
where $E$ is a spectrum truncation error and $v_k$ is defined by $v_0 := u$, and $v_r$ for $r \in\{1,...,k\}$ is defined iteratively by 
\be
\label{SFscheme}
\begin{split}
v_{r+1} &= e^{-\delta t M}v_r + e^{-\frac{1}{2}\delta t M} U_1(e^{-\delta t \Lambda}-I_K)U_2^T e^{-\frac{1}{2}\delta t M}v_r\\
&= a_1(\delta t) \odot v_r + a_3(\delta t) \odot \left( U_1\left( a_2(\delta t) \odot \left(U_2^T\left(a_3(\delta t) \odot v_r\right) \right)\right) \right)\\
&=: S(\delta t)v_r
\end{split}
\ee
where $\odot$ is the Hadamard (i.e. elementwise) product, $a_1(\delta t):= \operatorname{exp}(-\delta t\mu)$, $a_2(\delta t) := \operatorname{exp}(-\delta t\operatorname{diag}(\Lambda))-\mathbf{1}_K$, and $a_3(\delta t):= \operatorname{exp}(-\frac{1}{2}\delta t\mu)$ is the elementwise square root of $a_1(\delta t)$ (where $\exp$ is applied elementwise, and $\mathbf{1}_K$ is the vector of $K$ ones). In Fig. \ref{LPFfig}, we verify on a simple image that this method has quadratic error (plus a spectrum truncation error) and outperforms the \cite{MKB} Euler method. Furthermore
\eqref{SFscheme} is as fast as \eqref{Eulersoln2} (i.e. a step of the \cite{MKB} Euler method). This is because by defining $\tilde a_1 := \mathbf{1}-\delta t \mu$ and $\tilde a_2 := (\mathbf{1}_K +\delta t \operatorname{diag}(\Lambda))^{-1}$ (applying the reciprocation elementwise), we can rewrite \eqref{Eulersoln2} as 
\[
v_{r+1} = U_1 \left( \tilde a_2 \odot \left( U_2^T \left(  \tilde a_1 \odot v_r + \mu\delta t f \right) \right)\right)
\] and so both \eqref{SFscheme} and \eqref{Eulersoln2} involve two $\bigO(NK)$ matrix multiplications and the vectors in \eqref{SFscheme} and \eqref{Eulersoln2} and are at most $N$-dimensional, hence the Hadamard products in \eqref{SFscheme} and \eqref{Eulersoln2} are all at most $\bigO(N)$ and so are not rate-limiting.

At the cost of extra $\bigO(NK)$ matrix multiplications, one can employ the method of Yoshida \cite{Yoshida} to increase the order of the (non-spectrum-truncation) error by 2.
If we set $\alpha_0 := -\sqrt[3]{2}/(2 - \sqrt[3]{2})$ and $\alpha_1:= 1/(2 - \sqrt[3]{2})$ then we can define the map 
\[
Y(\delta t) := S(\alpha_1 \delta t) \circ S(\alpha_0 \delta t) \circ S(\alpha_1 \delta t)
\]
which gives $Y^k(\delta t)u = e^{-\tau A} u +\bigO(\delta t ^4)$ plus a spectrum truncation error.\footnote{This method can be extended to give higher-order formulae of any even order, but consideration of those formulae is beyond the scope of this paper.} However, as can be seen in Fig. \ref{LPFfig}(c,d), the spectrum truncation error can make negligible any gain from using the Yoshida method over the Strang formula.

It remains to compute an approximation for 
$
b : = A^{-1}(I-e^{-\tau A}) f. 
$ 
It is easy to show that $b$ can be rewritten as the integral
\[
b = \int_0^\tau e^{-tA} f \; dt
\]
which we can approximate via a quadrature, e.g. applying respectively the trapezium, midpoint, or Simpson's rules we get 
\be\label{eq_quad}
b = \frac{1}{2}\tau(I + e^{-\tau A})f +\bigO(\tau^3) =\tau e^{-\frac{1}{2}\tau A} f + \bigO(\tau^3) = \frac{1}{6}\tau(I + 4e^{-\frac{1}{2}\tau A} + e^{-\tau A})f + \bigO(\tau^5) 
\ee
any of which we can approximate efficiently via the above methods. Furthermore, as we only need to compute $b$ once, we can take a large value, $k_b$, for $k$ in those methods. As is standard for quadrature methods, the accuracy can often be improved by subdividing the interval. For example, using Simpson's rule and subdividing into intervals of length $h:=\tau/(2m)$ we get 
\begin{align*}
 b &= \frac{1}{3}h \left(I+2\sum_{j=1}^{m-1} e^{-2jhA} + 4\sum_{j=1}^{m}e^{-(2j-1)hA} + e^{-\tau A} \right)f + \bigO(\tau h^4)\\
\intertext{which if $k_b = 2m$, i.e. the Simpson subdivision equals the Strang/Yoshida step number, can be approximated efficiently by}
b& = \frac{1}{3} h \left(f+2\sum_{j=1}^{m-1} v_{2j} + 4\sum_{j=1}^{m} v_{2j-1} + v_{2m} \right) + E_1 + E_2 + \bigO(\tau h^4)
\end{align*} where $v_r:=S^r(h) f$ with $E_1 = \bigO(\tau^2 h)$ or $v_r := Y^r(h) f$ with $E_1 = \bigO(\tau^2 h^3)$, and $E_2$ is the spectrum truncation error. Finally, we can also let \textsc{Matlab} compute its preferred quadrature using the in-built \texttt{integrate} function, using either the Strang formula or Yoshida method to compute the integrand. However, we found this to be very slow. 

Another method to compute $b$ is to solve an ODE. We note that, by \eqref{fdiffusesoln}, $b$ is the fidelity forced diffusion of $\mathbf{0}$ at time $\tau$, i.e.
	\begin{align*}&\frac{dv}{dt}(t) = -\Delta v(t) -Mv(t) + f,  &v(0)=\mathbf{0},\\
\intertext{has $v(\tau) = b$. Hence we can approximate $b$ by solving}
	&\frac{d\hat v}{dt}(t) = -U_1\Lambda (U_2^T \hat v(t)) -\mu \odot \hat v(t) + f,  &\hat v(0)=\mathbf{0},\end{align*}
via the semi-implicit Euler method from \cite{MKB}.
Since we only need to compute $b$ once we can choose a small time step, i.e. a time step of $\tau/k_b$ for $k_b$ large, for this Euler method. One could also choose a higher-order ODE solver for this same reason, however as \cite{MKB} notes this ODE is stiff, which we found causes standard solvers such as \texttt{ode45} (i.e. Dormand--Prince-(4, 5) \cite{DP}) to be inaccurate, and we ran into the issue of the \textsc{Matlab} stiff solvers requiring matrices too large to fit in memory.  

Finally, we can try to compute the formula for $b$ directly. By the Strang formula or Yoshida method, we can efficiently compute $g:=f - e^{-\tau A}f$. It remains to compute $b = A^{-1}g$. Given our approximation $\Delta \approx  U_1\Lambda U_2^T$, by the Woodbury identity \cite{Woodbury}
\[
A^{-1} = (\Delta + M)^{-1}  \approx (I - U_1\Sigma U_2^T + M)^{-1} = Y^{-1} - Y^{-1} U_1\left( -\Sigma^{+} + U_2^T Y^{-1} U_1 \right)^{-1} U_2^T Y^{-1}
\]
where $Y := I+ M$, superscript $+$ denotes the pseudoinverse, and recall that $\Sigma = I_K-\Lambda$. Then
\[
b = y \odot \left( g - U_1 h \right) 
\]
where $y:=(1+\mu)^{-1}$, reciprocation applied elementwise, and $h$ is given by solving
\[
\left( -\Sigma^{+} + U_2^T (y \odot U_1) \right)h = U_2^T (y \odot g)
\]
where we define $y \odot U_1$ as columnwise Hadamard multiplication, i.e. $(y \odot U_1)_{ij} := y_i(U_1)_{ij}$. 
We compare the accuracy of these approximations of $b$ in Table \ref{btable}, and observe that no method is hands-down superior. Table \ref{btable} also indicates that the likely reason for methods like Simpson's rule not performing as well as expected is that the spectrum truncation error is dominating.

Given these ingredients, it is then straightforward to compute the SDIE scheme sequence via Algorithm~\ref{SDalg}.
\begin{algorithm}[htp]
 \caption{The SDIE scheme via a Strang formula method.\label{SDalg}}
\algblock[SWITCH]{switch}{endswitch}
\algblockdefx[NAME]{START}{END}%
[1]{\textbf{switch} #1}{\textbf{end switch}}
\begin{algorithmic}[1]
\Function{SDIE}{$\mu$, $f$, $U_1,\Lambda,U_2$, $\tau$, $\varepsilon$, $u_0$, $k$, $k_b$, $K$, $\delta$} \Comment{Computes the terminus of the SDIE scheme.  }
\If{using the quadrature method}
\State $F_1: t \mapsto e^{-tA} f$ \Comment{Computed using Strang formula or Yoshida method,  with parameter $k_b$}
\State $ b = \texttt{quadrature}(F_1,[0,\tau]) $ \Comment{Approximates $\int_0^\tau F_1(t) \; dt$ by some quadrature method} 
\ElsIf {using the ODE method}
\State $ F_2: x \mapsto  \left(-U_1\Lambda (U_2^T x) -\mu \odot x + f\right)$ 
\State $\hat v = \texttt{ode\_solver}(F_2,[0,\tau],\mathbf{0})$ \Comment{Solves $\hat v'(t)=F_2(\hat v)$ on $[0,\tau]$ with $\hat v(0) = \mathbf{0}$} 
\State $ b = \hat v(\tau)$
\ElsIf {using the Woodbury identity method}
\State $g = f - e^{-\tau A} f$  \Comment{Computed using Strang formula or Yoshida method, with parameter $k_b$}
\State $ y = (1+ \mu)^{-1} $
\State $\Sigma = I_K-\Lambda$
\State $(-\Sigma^+ + U_2^T (y \odot U_1) )h = U_2^T(y \odot g)$ \Comment{Solving the linear system for $h$}
\State $b = y \odot (g - U_1 h)$ 
\EndIf
\State $a_1 = \operatorname{exp}(-\tau/k \mu)$ 
\State $a_2 = \operatorname{exp}(-\tau/k \operatorname{diag}(\Lambda))-\mathbf{1}_K$ 
\State $a_3 = \operatorname{sqrt}(a_1)$
\State $ n=0$ 
\While{$||u_n-u_{n-1}||_2^2/||u_n||_2^2\geq\delta$}
\State $v = u_n$ 
\For{$r = 1$ to $k$}
	\State $v =a_1 \odot v + a_3 \odot \left( U_1\left( a_2 \odot \left(U_2^T\left(a_3 \odot v\right) \right)\right) \right)$ \Comment{Strang formula iteration}
\EndFor
\State $v = v + b$ \Comment{Approximates $v = \mathcal{S}_\tau u_n$}
\For{$i \in V$}
\If{$v_i < \tau/2\varepsilon$}
   \State $(u_{n+1})_i = 0$ 
\ElsIf{$v_i \geq 1 - \tau/2\varepsilon$}
    \State $(u_{n+1})_i = 1$ 
  \Else
   \State $(u_{n+1})_i= \frac{1}{2} + \frac{v_i - 1/2}{1-\tau/\varepsilon}$ 
 \EndIf
\EndFor
\State $n = n +1$ 
 \EndWhile
\State \textbf{return} $u_n$
\EndFunction
\end{algorithmic}
\end{algorithm}
\subsubsection{Numerical assessment of methods}
In this section, we will build our graphs on the image in Fig. \ref{toyimage}, which has sufficiently few pixels that we can compute the true values of $e^{-\tau A} u$ (with $A$ here given by $\Delta_s+M$) and $b$ to high accuracy. 
\begin{figure}[ht]
\centering
         \includegraphics[width=0.20\textwidth]{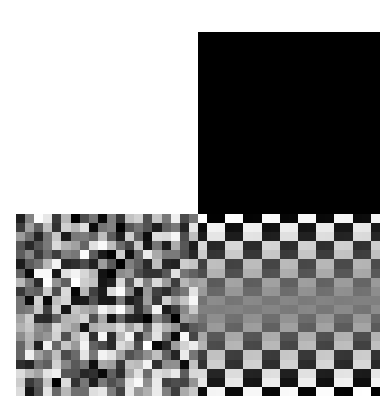}
\caption{The $40 \times 40$ or $80\times 80$ image that we build our graphs on, using feature vectors as described in Section \ref{appsetup}. } 
\label{toyimage}
\end{figure}
First, in Fig. \ref{LPFfig} we investigate the accuracy of the Strang formula and Yoshida method vs. the \cite{MKB} Euler method. We take $|V|=1600$, $\tau = 0.5$, $u$ a random vector given by \textsc{Matlab}'s \texttt{rand}(1600,1), and $\mu$ as the characteristic function of the left two quadrants of the image. We consider two cases: one where $K= |V|$ (i.e. full-rank) and one where $K=\sqrt{|V|}$. We observe that the Strang formula and Yoshida method are more accurate than the Euler method in both cases, and that the Yoshida method is more accurate than the Strang formula, but only barely in the rank-reduced case. Furthermore, the log-log gradients of the Strang formula error and the Yoshida method error (excluding the outliers for small $k$ values and the outliers caused by errors from reaching machine precision) in Fig. \ref{LPFfig}(b) are respectively 2.000 and 3.997 (computed using \texttt{polyfit}), confirming that these methods achieve their theoretical orders of error in the full-rank case.
\begin{figure}[ht]
\centering
\begin{subfigure}{0.48\textwidth}
\centering
         \includegraphics[width=\textwidth]{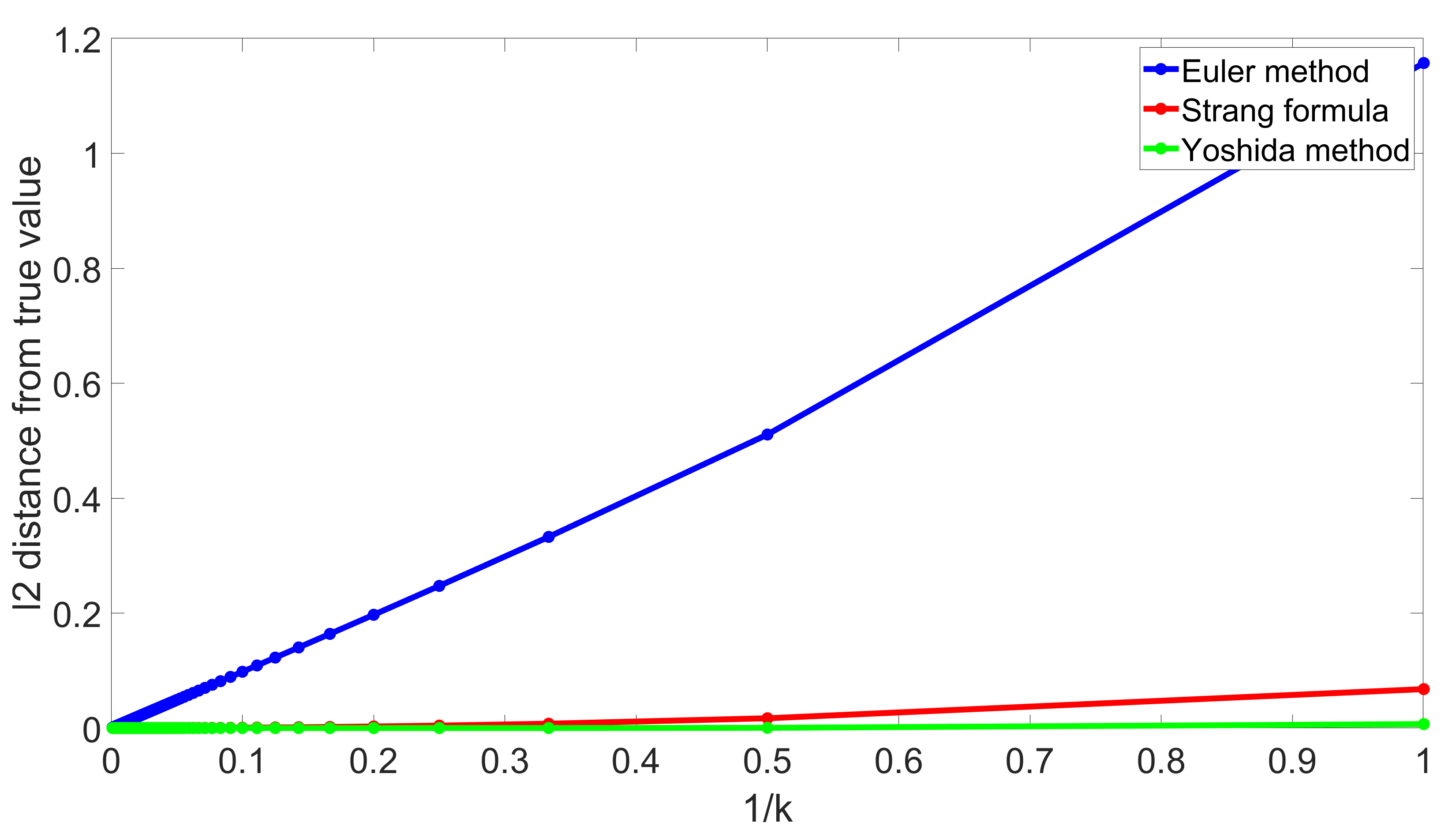}
\caption{Plot of $\ell^2$ errors vs. $1/k$.}
\end{subfigure}
\begin{subfigure}{0.48\textwidth}
\centering
         \includegraphics[width=\textwidth]{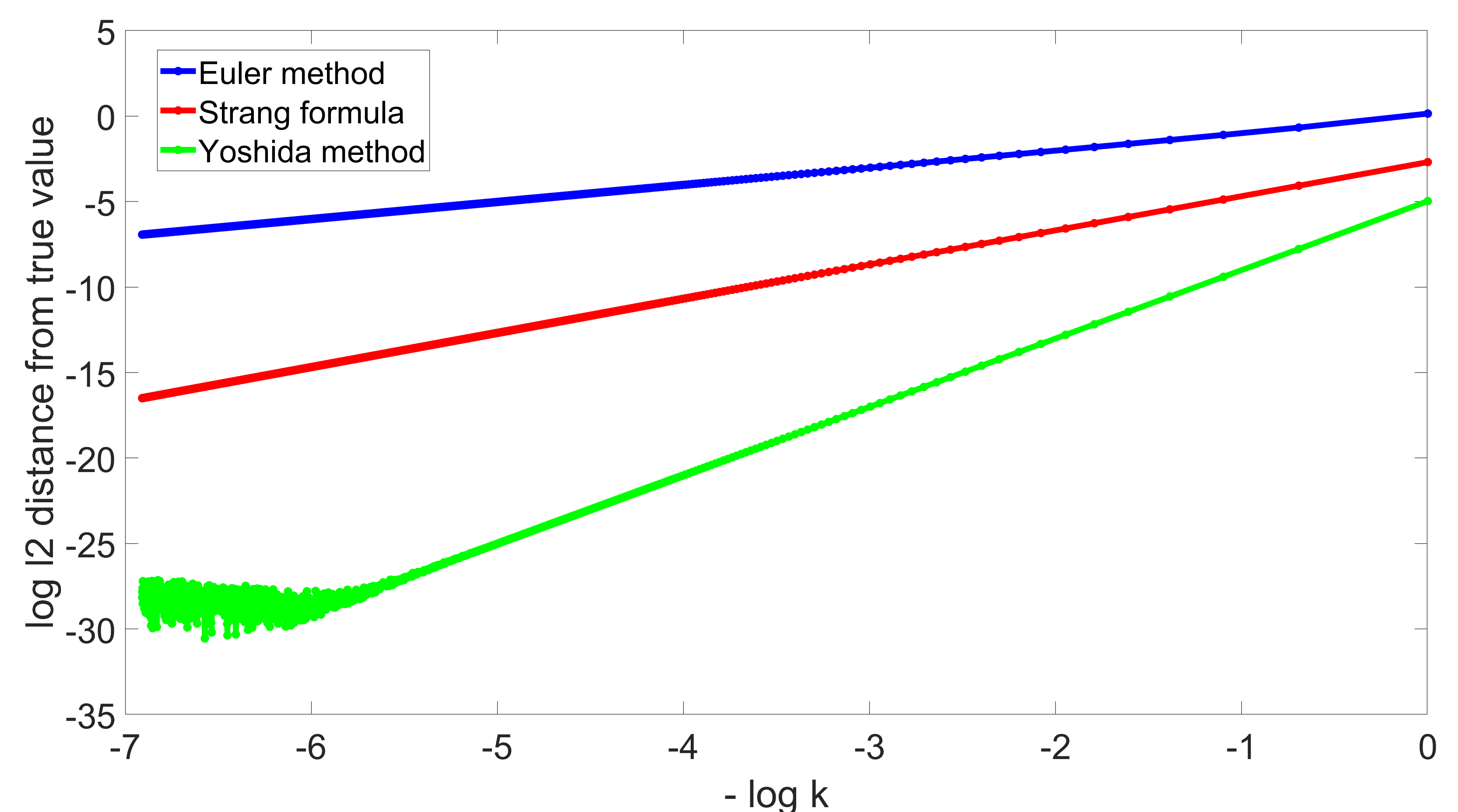}
\caption{Log-log plot, i.e. plot of log $\ell^2$ errors vs. $-\log k$.}
\end{subfigure}\\
\vspace{0.8em}
\begin{subfigure}{0.48\textwidth}
\centering
         \includegraphics[width=\textwidth]{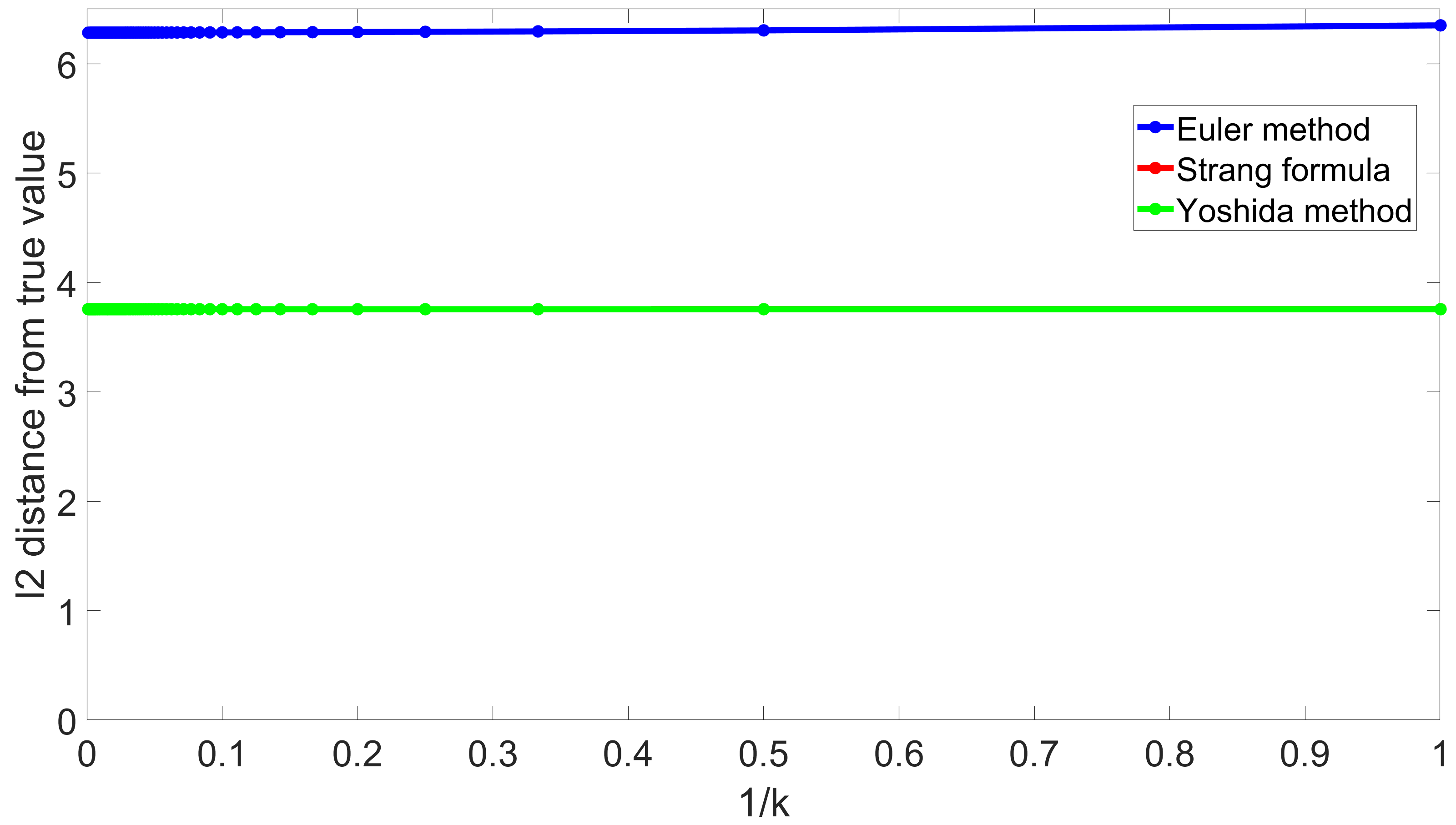}
\caption{Plot of $\ell^2$ errors vs. $1/k$ in the rank-reduced case.}
\end{subfigure}
\begin{subfigure}{0.48\textwidth}
\centering
         \includegraphics[width=\textwidth]{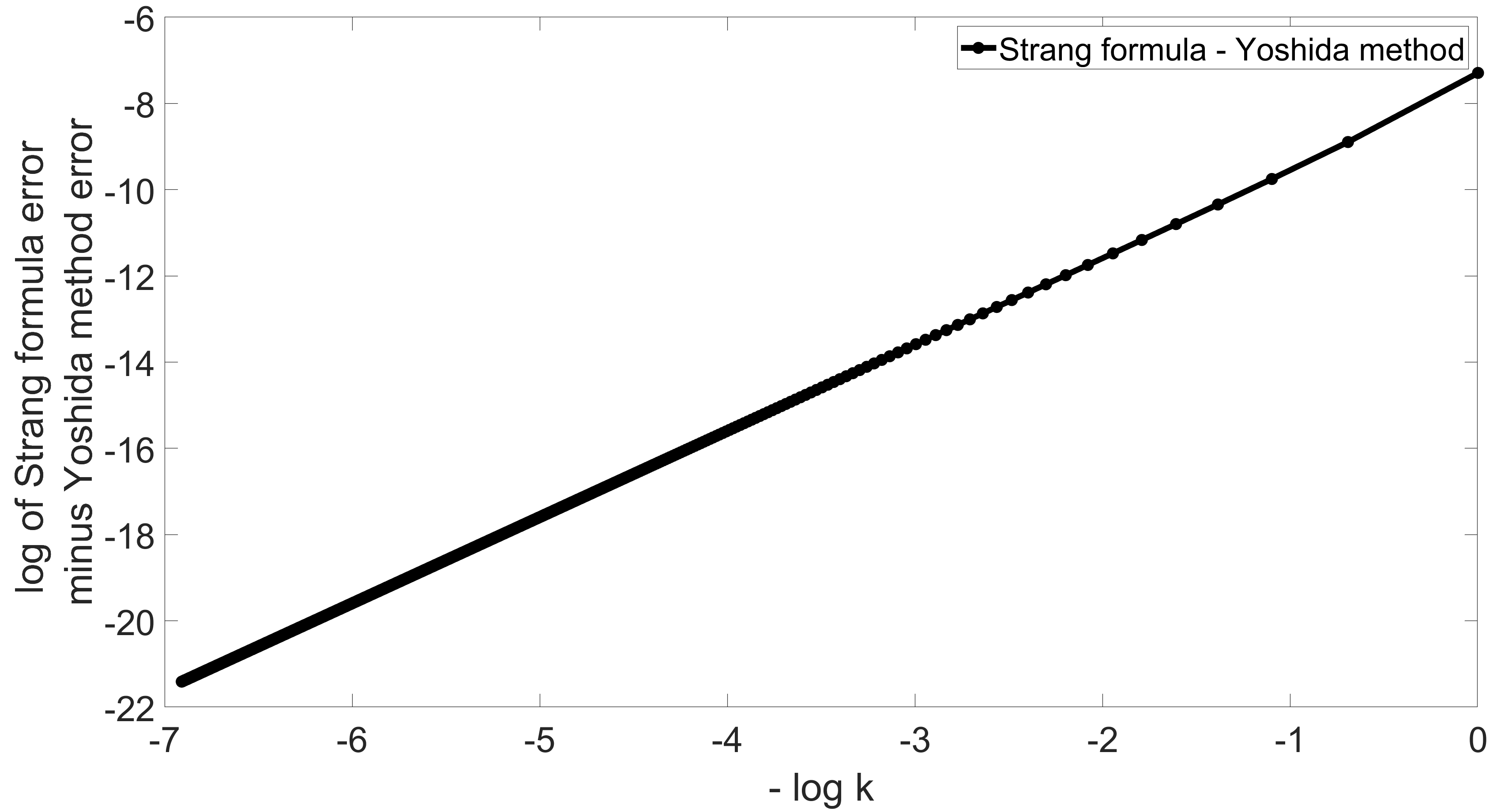}
\caption{Log-log plot of Strang formula error minus Yoshida formula error vs. $1/k$ in the rank-reduced case.}
\end{subfigure}
\caption{Comparison of the $\ell^2$ error from approximating $e^{-\tau A}u$ via the semi-implicit Euler method (blue), Strang formula (red), and Yoshida method (green) approximations for $e^{-\tau A}u$ on the graph built on the image in Fig. \ref{toyimage}. For (a) and (b), $K=|V|$. For (c) and (d), $K=\sqrt{|V|}$. The gradient of the line in (d), excluding outliers for small $k$, is 2.000. In (c), the green and red lines are both plotted but cannot be distinguished by the eye.   }
\label{LPFfig} 
\end{figure}

Next, in Table \ref{btable} we compare the accuracy of the different methods for computing $b$. We take $Z$ as the left two quadrants of the image, $\mu=\chi_Z$, $\tilde f$ as equal to the image on $Z$, and $k_b=1000$ in the Strang formula/Yoshida method approximations for $ e^{-t A}f$ and in the \cite{MKB} Euler scheme. We observe that the rank reduction plays a significant role in the errors incurred, and that no method is hands-down superior. In the ``two cows'' application (Example~\ref{ex_twocows}), we have observed that (interestingly) the \cite{MKB} Euler method yields the best segmentation. A topic for future research can be whether this is generally true for large matrices. 

\begin{table}[ht]
\hspace{-5.5em}
\begin{tabular}{|c|c|c|c|c|c|c|}
\hline
\multirow{2}{*}{\textbf{Method}}                                                                 & \multicolumn{3}{c|}{\textbf{Relative $\ell^2$ error for $\tau = 0.5$}}                                                                                                                       & \multicolumn{3}{c|}{\textbf{Relative $\ell^2$ error for $\tau=4$}}                                                                                                                           \\ \cline{2-7} 
                                                                                                 & \begin{tabular}[c]{@{}c@{}}$|V|=1600$,\\ $K=1600$\end{tabular} & \begin{tabular}[c]{@{}c@{}}$|V|=1600$,\\ $K=40$\end{tabular} & \begin{tabular}[c]{@{}c@{}}$|V|=6400$,\\ $K=40$\end{tabular} & \begin{tabular}[c]{@{}c@{}}$|V|=1600$,\\ $K=1600$\end{tabular} & \begin{tabular}[c]{@{}c@{}}$|V|=1600$,\\ $K=40$\end{tabular} & \begin{tabular}[c]{@{}c@{}}$|V|=6400$,\\ $K=40$\end{tabular} \\ \hline
Semi-implicit Euler  \cite{MKB}                                                                            & $1.434\times 10^{-4}$                                                         & 0.4951                                                       & 0.4111                                                       & $2.882\times 10^{-4}$                                                         & 0.2071                                                       & 0.1721                                                       \\ \hline
Woodbury identity                                                                                 & $2.292\times 10^{-8}$                                         & 0.5751                                                       & 0.4607                                                       & $1.038\times 10^{-7}$                                         & \textbf{0.1973}                                                       & \textbf{0.1537}                                                       \\ \hline
Midpoint rule \eqref{eq_quad}                                                                   & 0.0279                                                         & \textbf{0.1290}                                                       & \textbf{0.1110}                                                       & 0.4279                                                         & 0.6083                                                       & 0.6113                                                       \\ \hline
\begin{tabular}[c]{@{}c@{}}Simpson's rule ($m=500$) \\ via Strang formula\end{tabular}      & $2.592\times 10^{-9}$                                         & 0.1335                                                       & 0.1136                                                       & $5.845\times 10^{-7}$                                         & 0.5124                                                       & 0.4827                                                       \\ \hline
\begin{tabular}[c]{@{}c@{}}\textsc{Matlab} \texttt{integrate} \\ via Strang formula\end{tabular} & n/a                                                            & 0.1335                                                       & 0.1136                                                       & n/a                                                            & 0.5124                                                       & 0.4827                                                       \\ \hline
\begin{tabular}[c]{@{}c@{}}Simpson's rule ($m = 500$) \\ via Yoshida method\end{tabular}    &$\mathbf{8.381\times 10^{-14}}$                                        & 0.1335                                                       & 0.1136                                                       & $\mathbf{9.335\times 10^{-12}}$                                        & 0.5124                                                       & 0.4827                                                       \\ \hline
\begin{tabular}[c]{@{}c@{}}\textsc{Matlab} \texttt{integrate} \\ via Yoshida method\end{tabular} & n/a                                                            & 0.1335                                                       & 0.1136                                                       & n/a                                                            & 0.5124                                                       & 0.4827                                                       \\ \hline
\end{tabular}
\caption{Comparison of the relative $\ell^2$ errors from the methods for approximating $b$ on the image from Fig. \ref{toyimage}. We did not compute \texttt{integrate} for $K=1600$ as it ran too slowly. Bold entries indicate the smallest error in that column. }
\label{btable}
\end{table}
\newpage
\section{Applications in image processing}\label{applicationsec}
\subsection{Examples}
We consider three examples, all using images of cows from the Microsoft Research Cambridge Object Recognition Image Database\footnote{Available at \texttt{https://www.microsoft.com/en-us/research/project/image-understanding/} accessed 20 October 2020.}). Some of these images have been used before by \cite{MKB,BF} to illustrate and test graph-based segmentation algorithms.

\begin{example}[Two cows]\label{ex_twocows} We first introduce the \emph{two cows} example familiar from \cite{MKB,BF}. 
We take the image of two cows in the top left of Fig.~\ref{Fig_twocows} as the reference data $Z$ and the segmentation in the bottom left as the reference labels $\tilde f$, which separate the cows from the background. 
We apply the SDIE scheme to segment the image of two cows shown in the top right of Fig.~\ref{Fig_twocows}, aiming to separate the cows from the background, and compare to the ground truth in the bottom right.
Both images are RGB images of size $480 \times 640$ pixels, i.e. the reference data and the image are tensors of size $480 \times 640 \times 3$.
\end{example}

\begin{figure}[ht]
\centering
\includegraphics[scale=0.45]{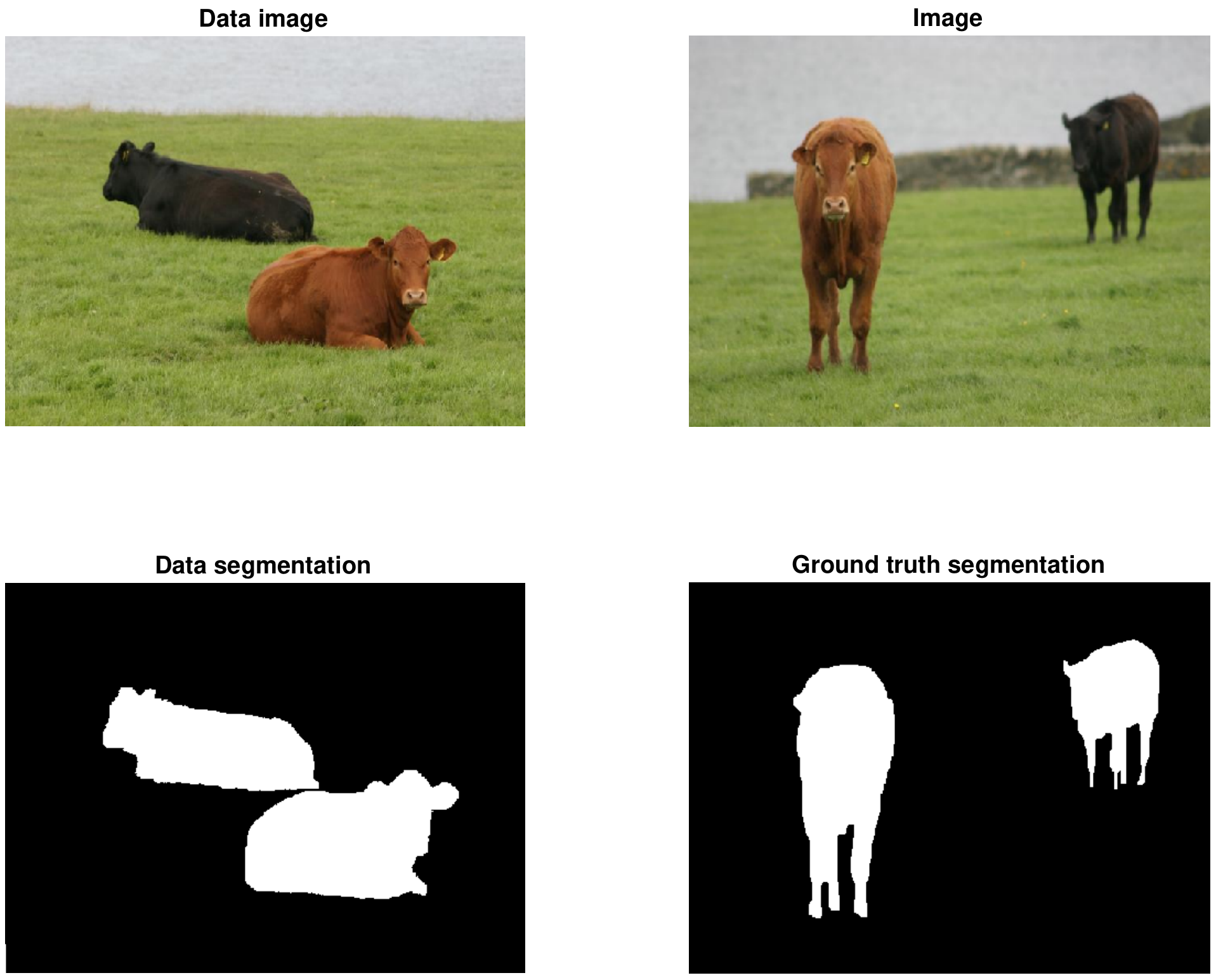}
\caption{Two cows: the reference data image, the reference $\tilde f$, the image to be segmented, and the ground truth segmentation associated to Example~\ref{ex_twocows}. We drew both segmentations by hand.} \label{Fig_twocows}
\end{figure}
We will use Example~\ref{ex_twocows} to illustrate the application of the SDIE scheme. Moreover, we will run several numerical experiments on this example. Namely, we will:
\begin{itemize}
\item study the influence of the parameters $\varepsilon$ and $\tau$, comparing non-MBO SDIE ($\tau<\varepsilon$) and MBO SDIE ($\tau = \varepsilon$);
\item compare different normalisations of the graph Laplacian, i.e. the symmetric vs. random walk normalisation;
\item investigate the influence of the Nystr\"om-QR approximation of the graph Laplacian in terms of the rank $K$; and
\item quantify the inherent uncertainty in the computational strategy induced by the randomised Nystr\"om approximation.
\end{itemize}

\begin{example}[Greyscale] \label{ex_twocows_gs}
This example is the greyscale version of Example~\ref{ex_twocows}. Hence, we map the images in Fig.~\ref{Fig_twocows} to greyscale using {\rm\texttt{rgb2gray}}. We show the greyscale images in Fig.~\ref{Fig_twocows_gs}. We use the same segmentation of the reference data image as in Example~\ref{ex_twocows}.
The greyscale images are matrices of size $480 \times 640$.
\end{example}
\begin{figure}[h]
\centering
\includegraphics[scale=0.65]{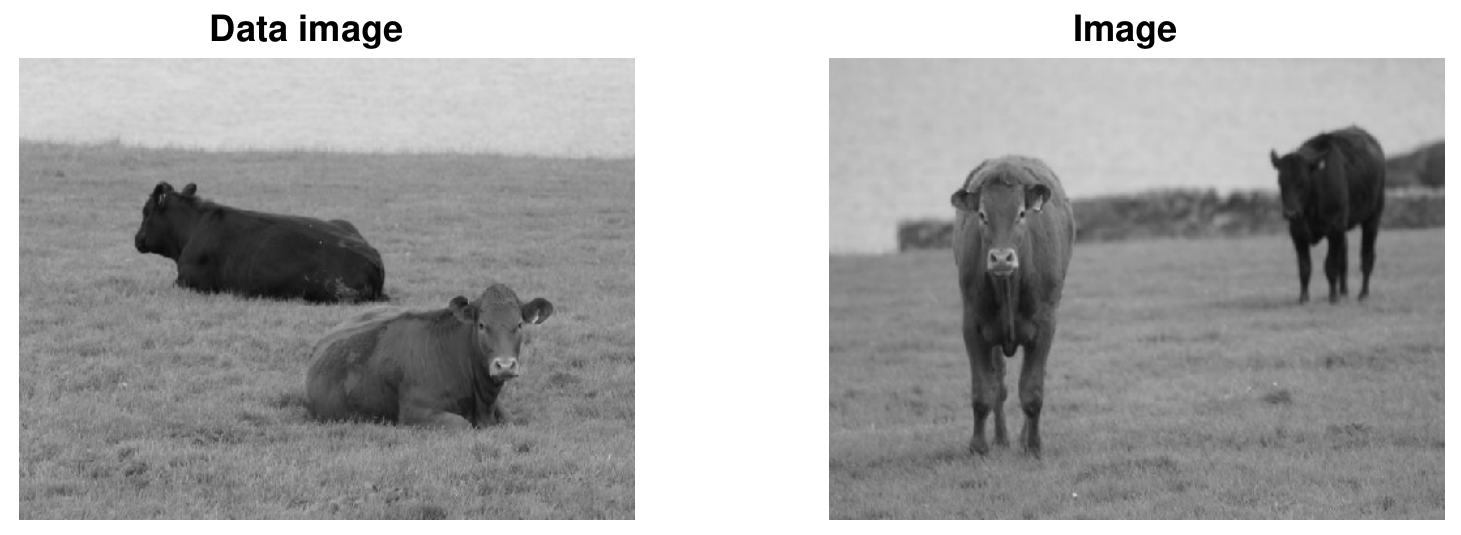}

\caption{Two cows greyscale: the reference data image and the image to be segmented associated to Example~\ref{ex_twocows_gs}. Note that the reference $\tilde f$ and the ground truth segmentation are identical to those in Fig.~\ref{Fig_twocows}.} \label{Fig_twocows_gs}
\end{figure}

The greyscale image is much harder to segment than the RGB image, as there is no clear colour separation. With Example~\ref{ex_twocows_gs}, we aim to illustrate the performance of the SDIE scheme in a harder segmentation task.

\begin{example}[Many cows] \label{ex_manycows}
In this example, we have concatenated four images of cows that we aim to segment as a whole. We show the concatenated image in Fig.~\ref{Fig_twocows_large}.
Again, we shall separate the cows from the background. As reference data, we use the reference data image and labels as in Example~\ref{ex_twocows}. Hence, the reference data is a tensor of size $480 \times 640 \times 3$. The image consists of approximately $1.23$ megapixels. It is represented by a tensor of size $480 \times 2560 \times 3$.
\end{example}
\begin{figure}[h]
\centering
\includegraphics[width=\textwidth]{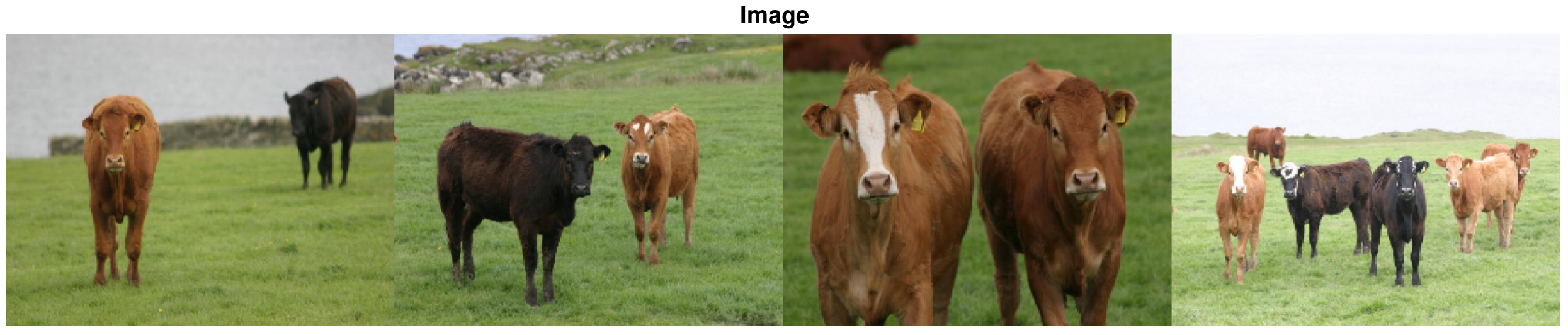}

\caption{Many cows: the image to be segmented associated to Example~\ref{ex_manycows}. Note that the reference data image and labels are identical to those in Fig.~\ref{Fig_twocows} (left).} \label{Fig_twocows_large}
\end{figure}
With Example~\ref{ex_manycows}, we will illustrate the application of the SDIE scheme to large scale images, as well as the case where the image and reference data are of different sizes.
\begin{nb}
In each of these examples we took as reference data a separate reference data image. However, our algorithm does not require this, and one could take a subset of the pixels of a single image to be the reference data, and thereby investigate the impact of the relative size of the reference data on the segmentation, which is beyond the scope of this paper but is explored for the \cite{MKB} MBO segmentation algorithm and related methods in \cite[Fig. 4]{BayesianGraphs}.
\end{nb}
\subsection{Set-up}\label{appsetup}
\paragraph{Algorithms}
We here use the Nystr\"om-QR method to compute the rank $K$ approximation to the Laplacian, and we use the \cite{MKB} semi-implicit Euler method (with time step $\tau/k_b$) to compute $b$ (as we found that in practice this worked best for the above examples).
\paragraph{Feature vectors}
Let $\mathcal{N}(i)$ denote the $3\times 3$ neighbourhood of pixel $i\in V$ in the image (with replication padding at borders performed via \texttt{padarray}) and let $\mathcal{K}$ be a $3\times 3$ Gaussian kernel with standard deviation 1 (computed via \texttt{fspecial}(\texttt{`gaussian'},3,1)). Thus $x|_{\mathcal{N}(i)}$ can be viewed as a triple of $3\times 3$ matrices $x^J|_{\mathcal{N}(i)}$ for $J\in\{R,G,B\}$ (i.e. one in each of the R, G, and B channels). Then in each channel we define \begin{align*}
&z_i^R := 9\mathcal{K}\odot x^R|_{\mathcal{N}(i)}, &z_i^G := 9\mathcal{K}\odot x^G|_{\mathcal{N}(i)},& &z_i^B := 9\mathcal{K}\odot x^B|_{\mathcal{N}(i)},
\end{align*}
and thus define $z_i := (z_i^R,z_i^G,z_i^B)\in\mathbb{R}^{3\times 3\times 3}$, which we reshaped (using \texttt{reshape}) so that $z\in\mathbb{R}^{|V|\times 27}$. 
\paragraph{Interpolation sets}
For the interpolation sets $X_1,X_2$ in Nystr\"om, we took $K/2$ vertices from the reference data image and $K/2$ vertices from the image to be segmented, chosen at random using \texttt{randperm}. We experimented with choosing interpolation sets using ACA (see \cite{BK}), but this showed no improvement over choosing random sets, and ran much slower. 
\paragraph{Initial condition}
We took the initial condition, i.e. $u_0$, to equal the reference $\tilde f$ on the reference data vertices and to equal 0.49 on the vertices of the image to be segmented (where $\tilde f$ labels `cow' with 1 and `not cow' with 0). We used 0.49 rather than the more natural 0.5 because the latter led to much more of the background (e.g. the grass) getting labelled as `cow'. This choice can be viewed as incorporating the slight extra \emph{a priori} information that the image to be segmented has more non-cow than cow.
\paragraph{Fidelity parameter}
We followed \cite{MKB} and took $\mu = \hat \mu \chi_Z$, for $\hat\mu>0$ a parameter. 
\paragraph{Computational set-up}
All programming was done in \textsc{Matlab}R2019a with relevant toolboxes the Computer Vision Toolbox Version 9.0, Image Processing Toolbox Version 10.4, and Signal Processing Toolbox Version 8.2.
All reported runtimes are of implementations executed serially on a machine with an Intel\textsuperscript{\textregistered}  Core\textsuperscript{\texttrademark} i7-9800X @ 3.80 GHz [16 cores] CPU and 32 GB RAM of memory.

\subsection{Two cows}
We begin with some examples of segmentations obtained from the SDIE scheme. Based on these, we illustrate the progression of the algorithm and discuss the segmentation output qualitatively. We will give a quantitative analysis in Section~\ref{subsubse_error_timing_2cows}. Note that we give here merely \emph{typical} realisations of the random output of the algorithm---the output is random due to the random choice of interpolation sets in the Nystr\"om approximation. We investigate the stochasticity of the algorithm in Section~\ref{subsubse_uncertain}.

\begin{figure}[hpt]
\centering
\begin{subfigure}{0.49\textwidth}
\centering
         \includegraphics[width=\textwidth]{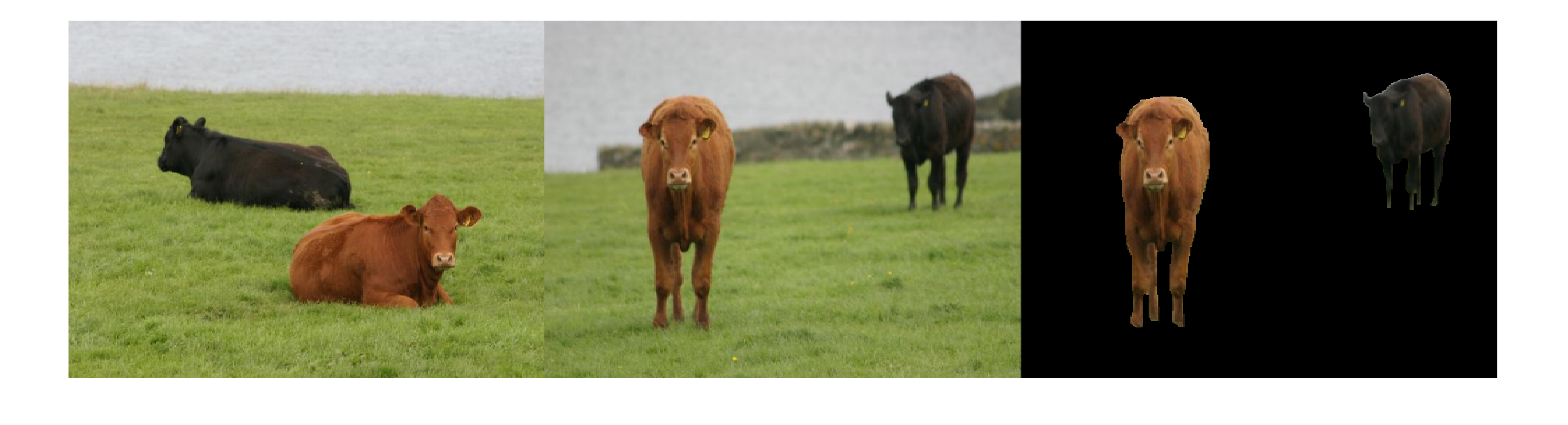}\vspace{-0.35cm}
         \caption{Reference data image, image to be segmented, and \break image  masked with ground truth segmentation.}
\end{subfigure} 
\begin{subfigure}{0.49\textwidth}
\centering
         \includegraphics[width=\textwidth]{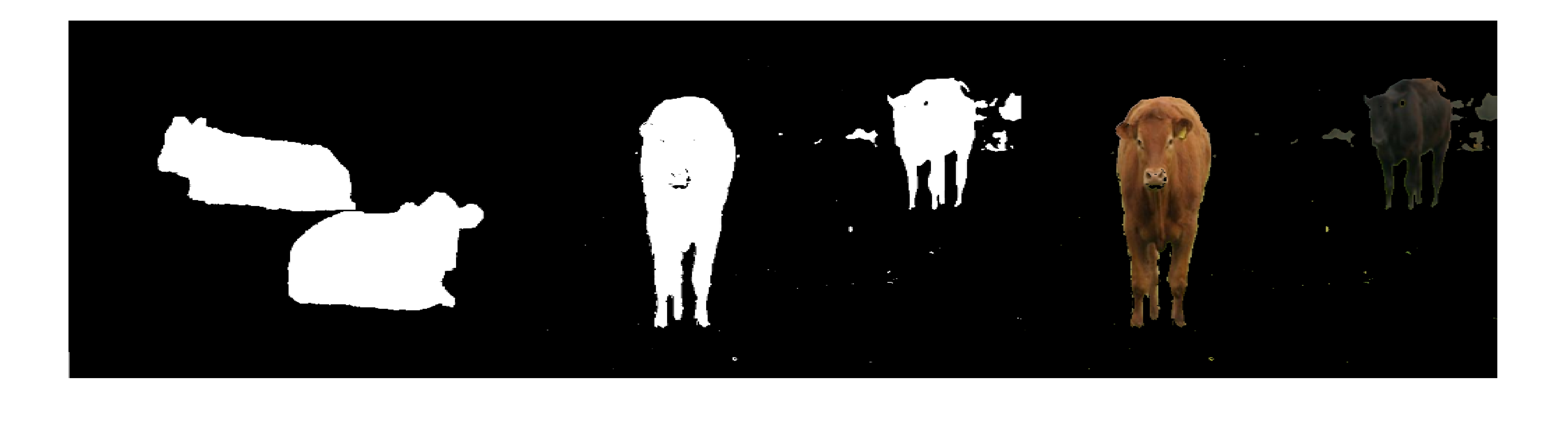}\vspace{-0.35cm}
\caption{$\tau = 0.001 \ll 0.003 = \varepsilon$. Relative segmentation \break error: 1.6416\%, elapsed time: 13.0 sec.}
\end{subfigure} \\
 \begin{subfigure}{0.49\textwidth}
\centering
         \includegraphics[width=\textwidth]{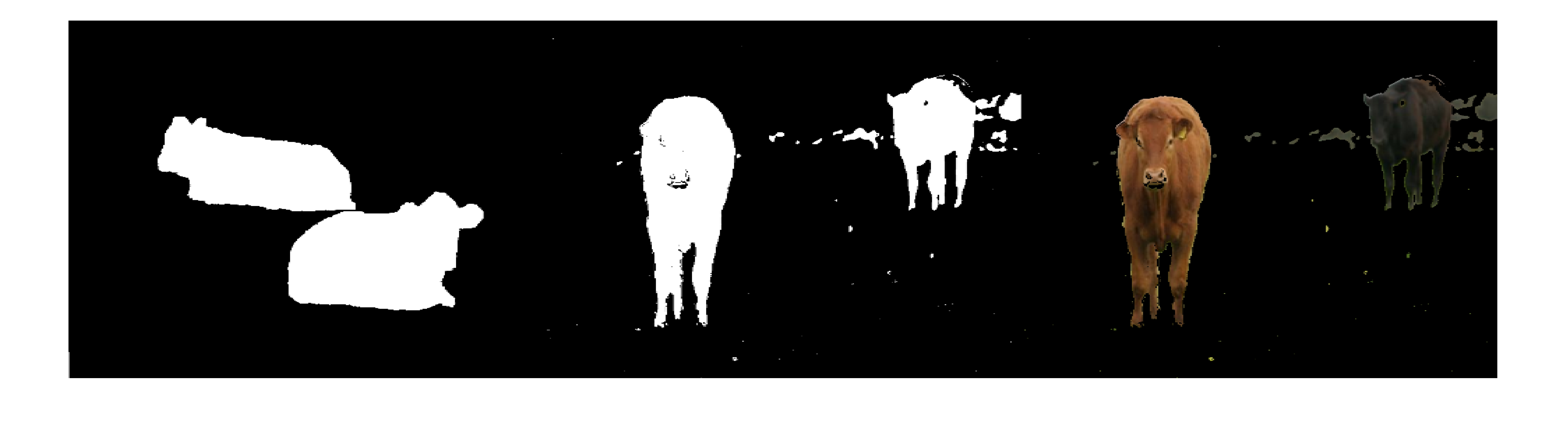}\vspace{-0.35cm}
\caption{$\tau = 0.0025 < 0.003 =\varepsilon$. Relative segmentation \break error: 1.9424\%, elapsed time: 4.9 sec.}
\end{subfigure}
 \begin{subfigure}{0.49\textwidth}
\centering
         \includegraphics[width=\textwidth]{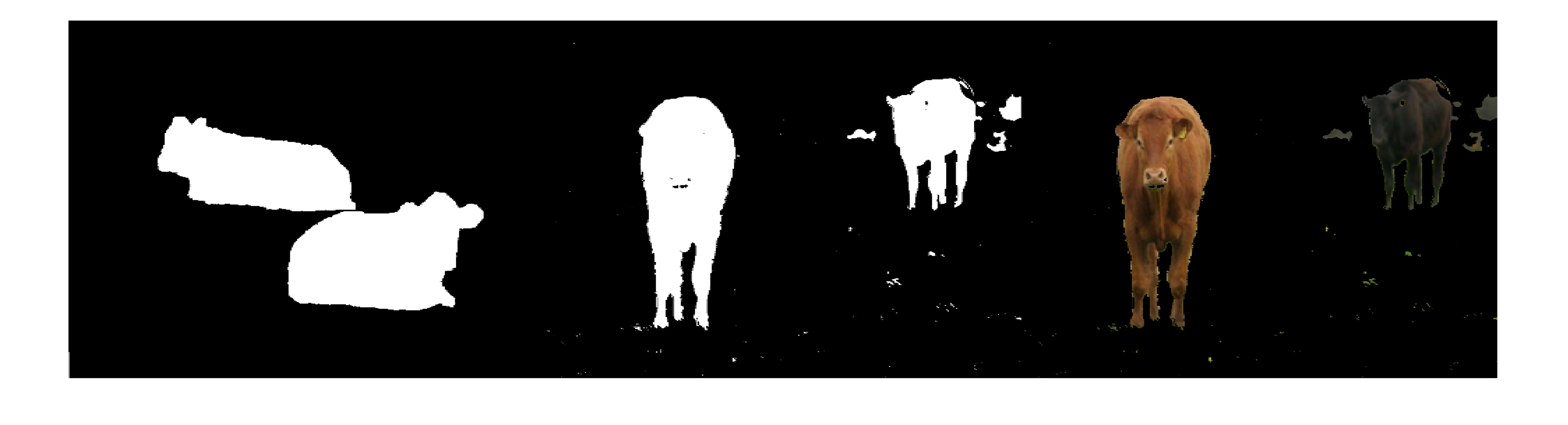}\vspace{-0.35cm}
\caption{$\tau = \varepsilon = 0.003$ (following \cite{MKB}). Relative segmentation error: 1.5378\%, elapsed time: 2.4 sec.}
\end{subfigure}
\caption{MBO SDIE and non-MBO SDIE segmentations for the \emph{two cows} segmentation task. In the top left figure, we show the reference data image, the image to be segmented, and the image masked with the segmentation we consider the ground truth, see also Fig.~\ref{Fig_twocows}. The other figures (b)-(d) show the labels on the reference data, the segmentation returned by the respective algorithm, and the original images masked with the segmentation.}
\label{Fig_Results_MBO+SDIE} 
\end{figure}

We consider three different cases: the MBO case $\tau = \varepsilon$ and two non-MBO cases, where $\tau \ll \varepsilon$, and $\tau < \varepsilon$. We show the resulting reconstructions from these methods in Fig.~\ref{Fig_Results_MBO+SDIE}. Moreover, we show the progression of the algorithms in Fig. \ref{Fig_ProgressionMBO}. The parameters not given in the captions of Fig.~\ref{Fig_Results_MBO+SDIE}  are $\hat\mu = 30$, $\sigma = 35$, $k_b = 1$, $k = 1$, $\delta = 10^{-10}$, and $K = 70$.    

\begin{nb} The regime $\tau > \varepsilon$ is not of much interest since, by \cite[Remark 4.8]{Budd} \emph{mutatis mutandis}, in this regime the SDIE scheme has non-unique solution for the update, of which one is just the MBO solution.
\end{nb}

Comparing the results in Fig.~\ref{Fig_Results_MBO+SDIE}, we see roughly equivalent segmentations and segmentation errors. Indeed, the cows are generally nicely segmented in each of the cases. However, the segmentation also labels as `cow' a part of the wall in the background and small clumps of grass, while a small part of the left cow's snout is cut out. This may be because the reference data image does not contain these features and so the scheme is not able to handle them correctly. 

In Fig.~\ref{fig:usvsBFMKB} we compare the result of Fig.~\ref{Fig_Results_MBO+SDIE}(d) (our best segmentation) with the results of the analogous experiments in \cite{MKB,BF}. We observe  significant qualitative improvement. In particular, we achieve a much more complete identification of the left cow's snout, complete identification of the left cow's eyes and ear tag, and a slightly more complete identification of the right cow's hind.   
\begin{figure}[ht]
    \centering
    \begin{subfigure}[t]{0.32\textwidth}
    \centering
    \includegraphics[width=\textwidth]{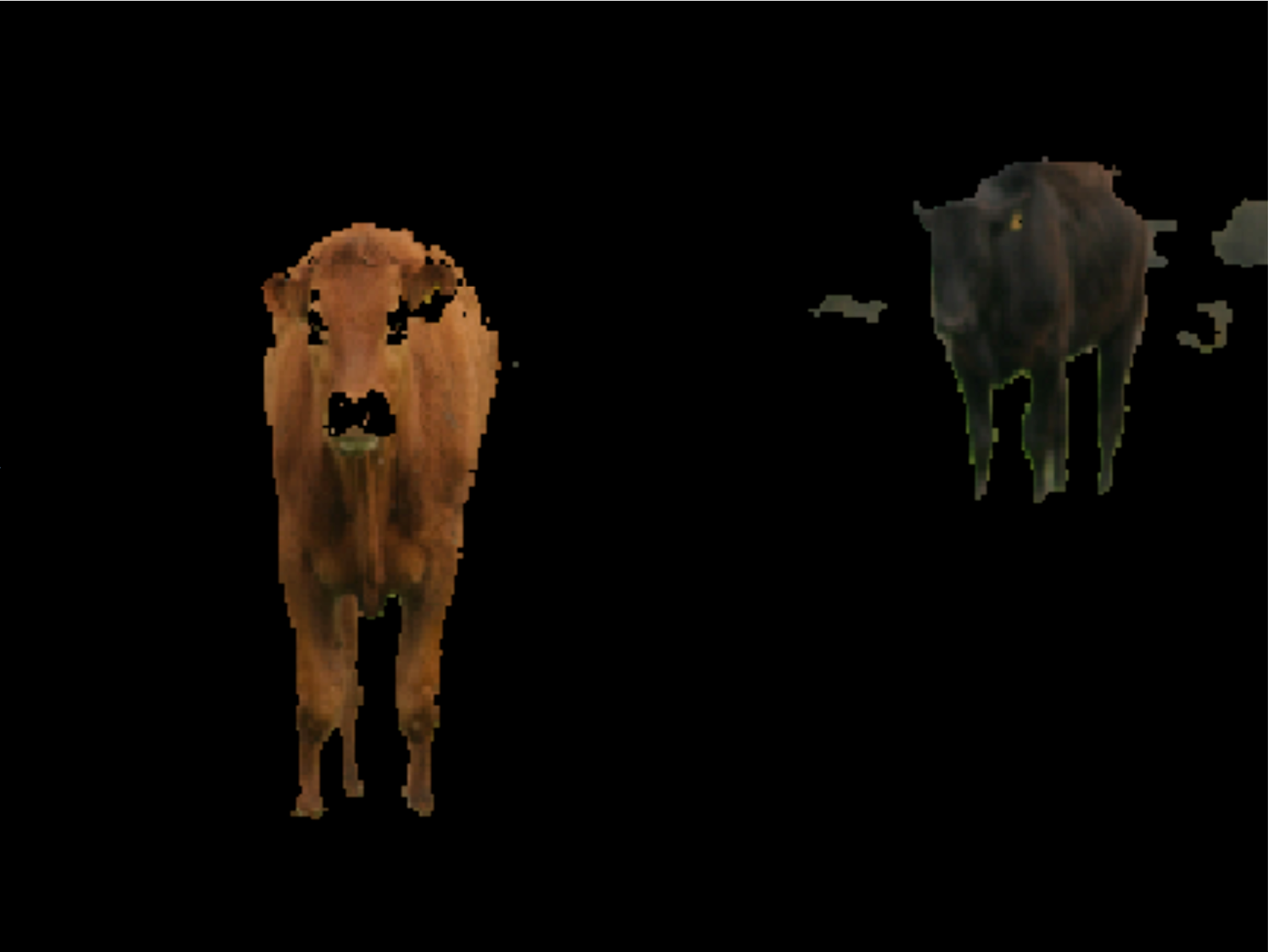}
    \caption{Segmentation from \cite[Fig. 4.6]{BF}}
    \end{subfigure}
    \begin{subfigure}[t]{0.32\textwidth}
    \centering
    \includegraphics[width=\textwidth]{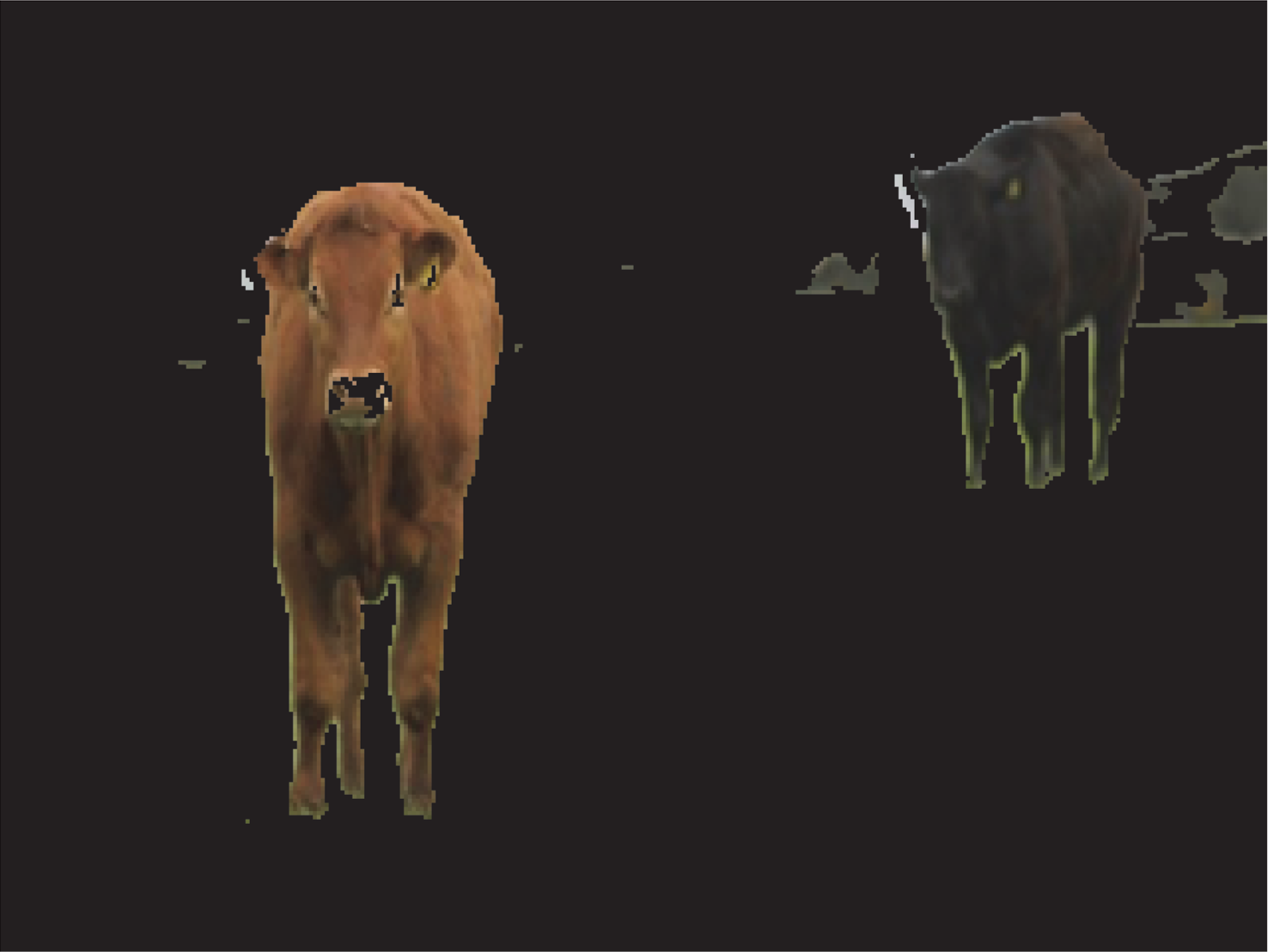}
    \caption{Segmentation from \cite[Fig. 2(f)]{MKB}}
    \end{subfigure}
    \begin{subfigure}[t]{0.32\textwidth}
    \centering
    \includegraphics[width=\textwidth]{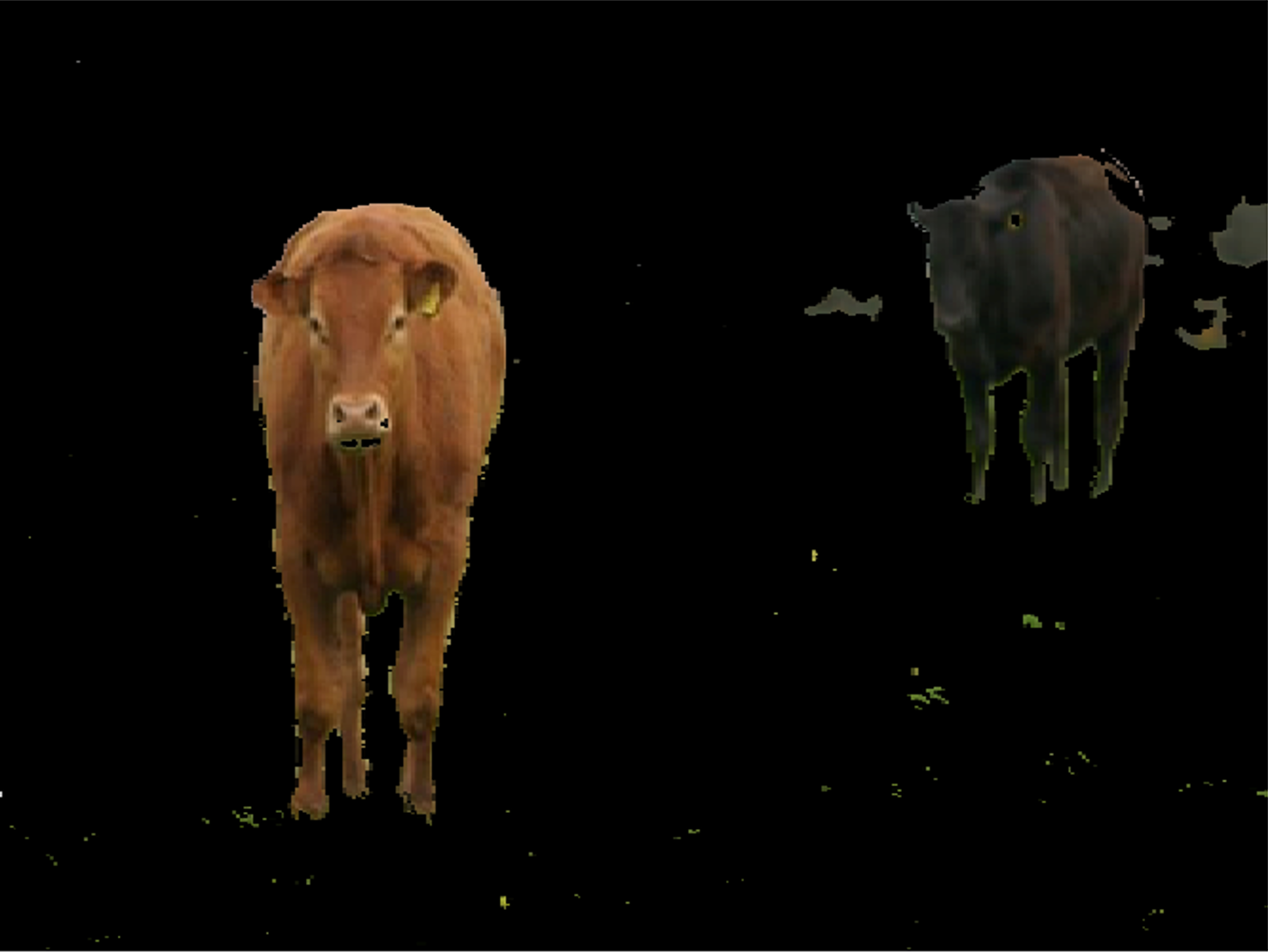}
    \caption{Segmentation from Fig.~\ref{Fig_Results_MBO+SDIE}(d)}
    \end{subfigure}
    \caption{Comparison of our segmentation (using the set-up in Fig.~\ref{Fig_Results_MBO+SDIE}(d)) with the analogous segmentations from the previous literature \cite{MKB,BF}, both reproduced with permission from SIAM and the authors. Note that unfortunately in reproduction the colour balances and aspect ratios have become slightly inconsistent, but we can still make qualitative comparisons.  }
    \label{fig:usvsBFMKB}
\end{figure}
\begin{figure}[hpt]
\centering
\begin{subfigure}[t]{0.25\textwidth}
\centering
         \includegraphics[width=\textwidth,height=4.8\textwidth]{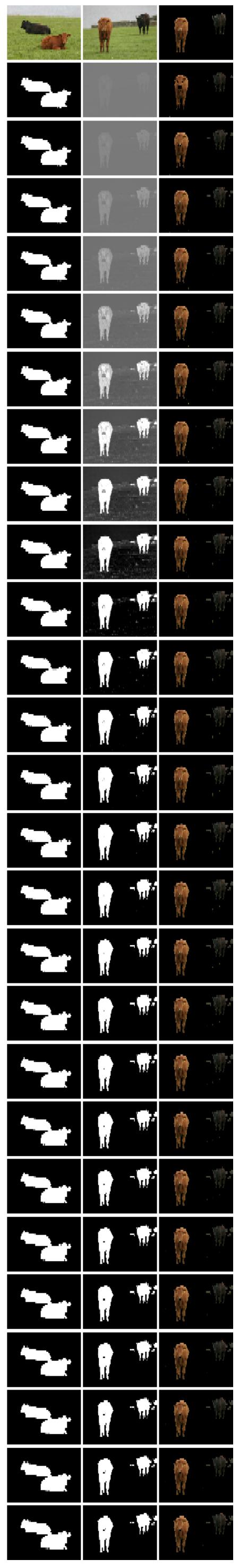}
\caption{$\tau = 0.001 \ll 0.003 = \varepsilon$.}
\end{subfigure}
\hspace{0.1\textwidth}
 \begin{subfigure}[t]{0.25\textwidth}
\centering
         \includegraphics[width=\textwidth]{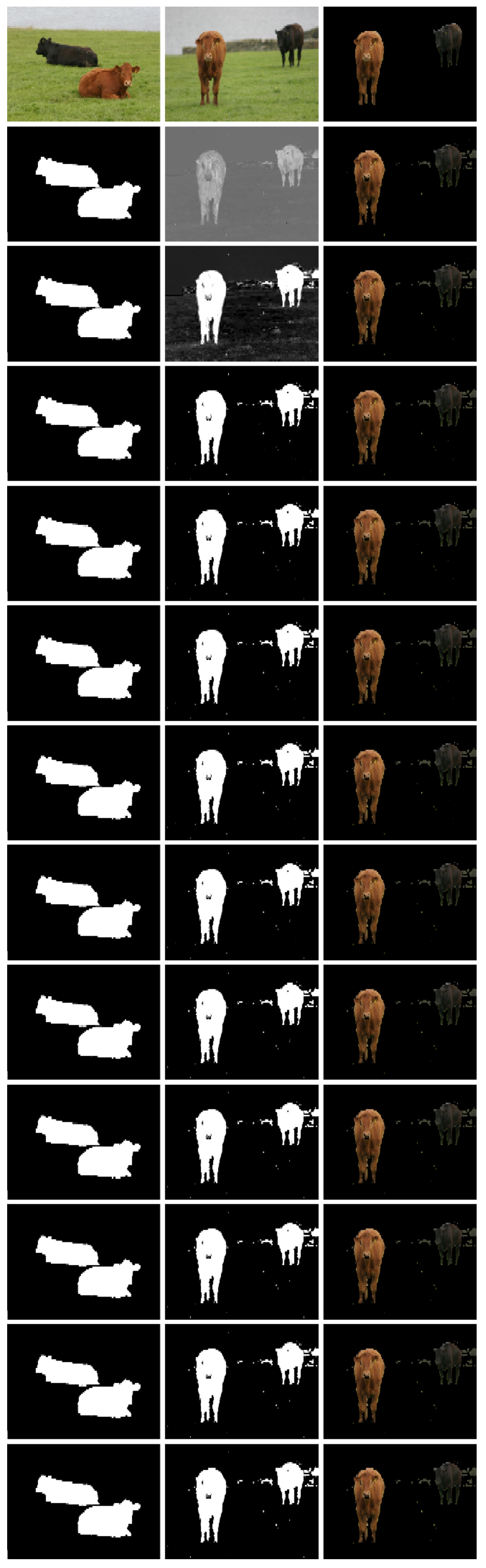}
\caption{$\tau = 0.0025 < 0.003 =\varepsilon$}
\end{subfigure}
\hspace{0.1\textwidth}
 \begin{subfigure}[t]{0.25\textwidth}
\centering
         \includegraphics[width=\textwidth]{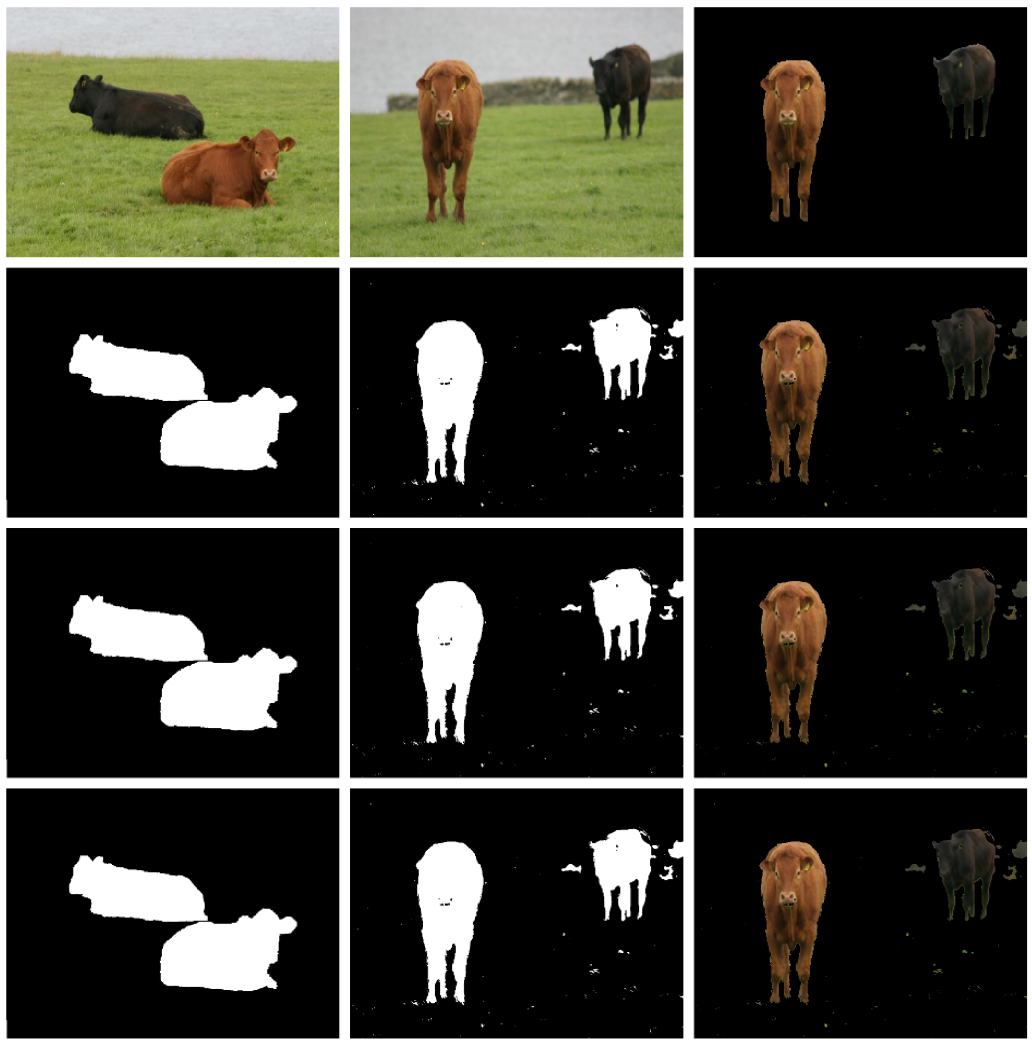}
\caption{$\tau = \varepsilon = 0.003$ (as in \cite{MKB})}
\end{subfigure} 
\caption{Progression of MBO and non-MBO SDIE for the \emph{two cows} example. In each subfigure: The first row shows the reference data, image, and ground truth, as in Fig.~\ref{Fig_Results_MBO+SDIE}. The middle rows, showing the reshaped label vector $u_n$ and the image masked by the thresholded label, each represent one iteration of the considered algorithm, to be read from top to bottom. The last row gives the state returned by the scheme, i.e. the state satisfying the termination criterion, which correspond to the subfigures in Fig.~\ref{Fig_Results_MBO+SDIE}. For layout reasons, we have squashed the figure in (a).}
\label{Fig_ProgressionMBO} 
\end{figure}

We measure the computational cost of the SDIE scheme through the measured runtime of the respective algorithm. We note from Fig.~\ref{Fig_Results_MBO+SDIE} that the MBO scheme ($\tau = \varepsilon$) outperforms the non-MBO schemes ($\tau < \varepsilon$); the SDIE relaxation of the MBO scheme merely slows down the convergence of the algorithm, without improving the segmentation. This can especially be seen in Fig.~\ref{Fig_ProgressionMBO}, where the SDIE scheme needs many more steps to satisfy the termination criterion.
At least for this example, the non-MBO SDIE scheme is less efficient than the MBO scheme. Thus, in the following sections we focus on the MBO case.
\subsubsection{Errors and timings} \label{subsubse_error_timing_2cows}
We now quantify the influence, on the accuracy and computational cost of the segmentation, of the Nystr\"om rank $K$, the number of discretisation steps $k_b$ and $k$ in the Euler method and the Strang formula respectively, and the choice of normalisation of the graph Laplacian. 
To this end, we segment the \emph{two cows} image using the following parameters: $\varepsilon = \tau = 0.003$,  $\hat\mu = 30$, $\sigma = 35$, and $\delta = 10^{-10}$. We take $K \in \{10, 25, 70, 100, 250\}$, $(k_b, k) \in \{(1,1), (10,5)\}$, and use the random walk Laplacian $\Delta$ and the symmetric normalised Laplacian $\Delta_s$. 

We plot runtimes and relative segmentation errors in Fig.~\ref{Fig_Error_timing}. As our method has randomness from the Nystr\"om extension, we repeat every experiment 100 times and show means and standard deviations. We make several observations. Starting with the runtimes, we indeed see that these are roughly linear in $K$, verifying numerically the expected complexity. The runtime also increases when increasing $k_b$ and $k$. That is, increasing the accuracy of the Euler method and Strang formula does not lead to faster convergence, and also does not increase the accuracy of the overall segmentation. Finally, we see that the symmetric normalised Laplacian incurs consistently low relative segmentation error for small values of $K$. This property is of the utmost importance to scale up our algorithm for very large images. Interestingly, the segmentations using the symmetric normalised Laplacian seem to deteriorate for a large $K$. The random walk Laplacian has diametric properties in this regard: the segmentations are only reliably accurate when $K$ is reasonably large.

\begin{figure}[hpt]
\centering
\begin{subfigure}{0.49\textwidth}
\centering
         \includegraphics[width=\textwidth]{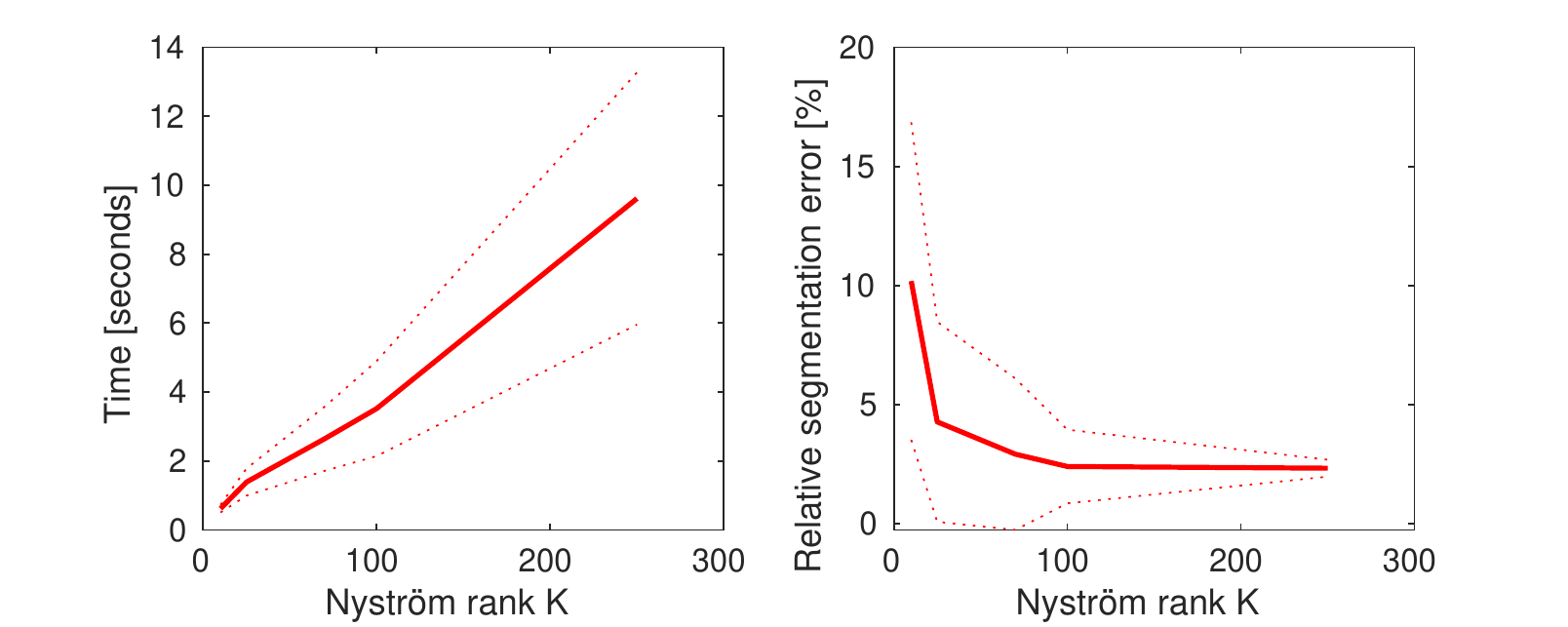}
\caption{$k_b=1$, $k = 1$, random walk Laplacian.}
\end{subfigure}
\begin{subfigure}{0.49\textwidth}
\centering
         \includegraphics[width=\textwidth]{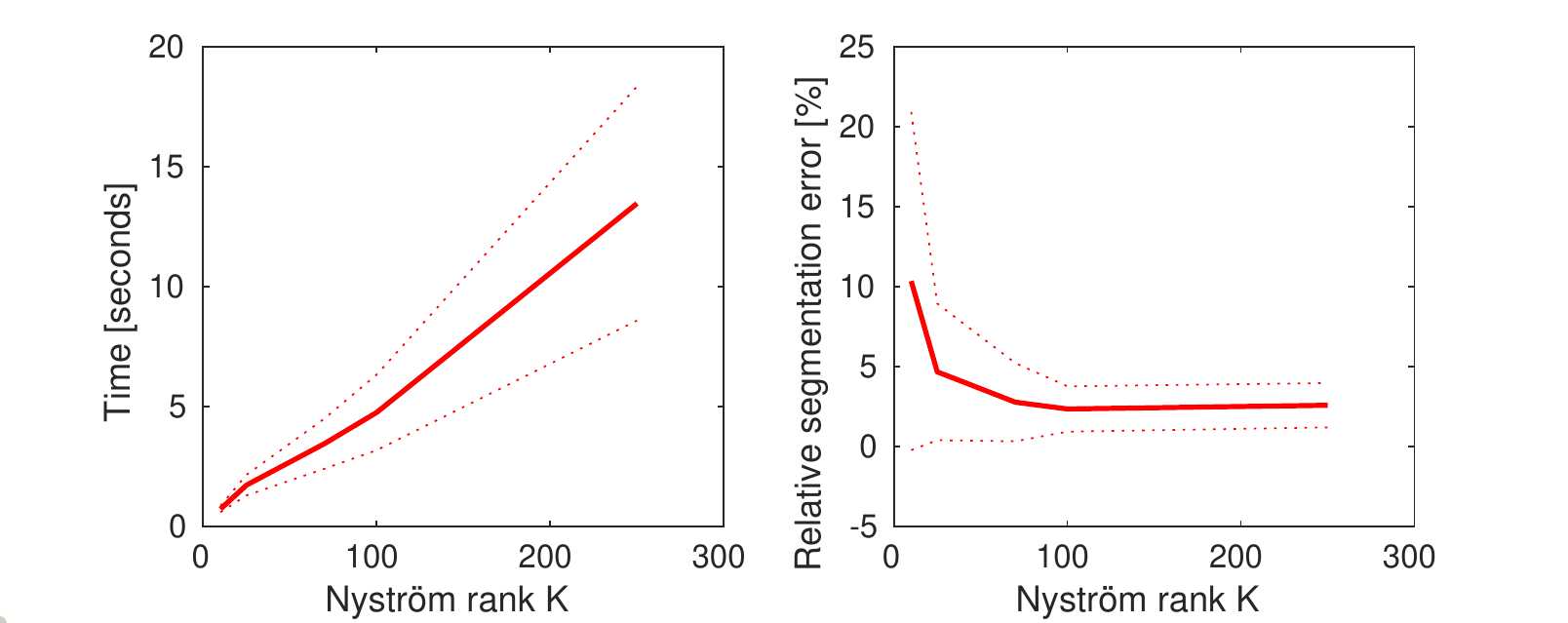}
\caption{$k_b=10$, $k = 5$, random walk Laplacian.}
\end{subfigure}\\
\begin{subfigure}{0.49\textwidth}
\centering
         \includegraphics[width=\textwidth]{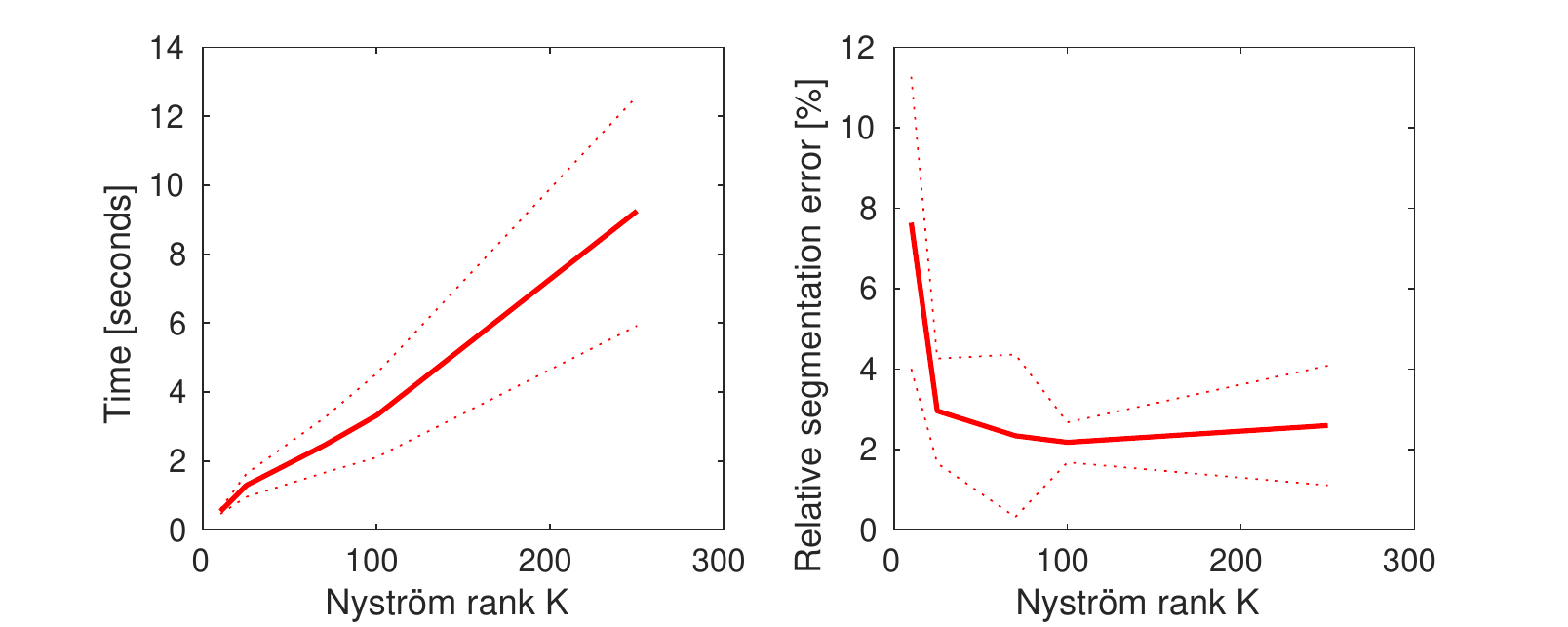}
\caption{$k_b=1$, $k = 1$, symmetric normalised Laplacian.}
\end{subfigure}
\begin{subfigure}{0.49\textwidth}
\centering
         \includegraphics[width=\textwidth]{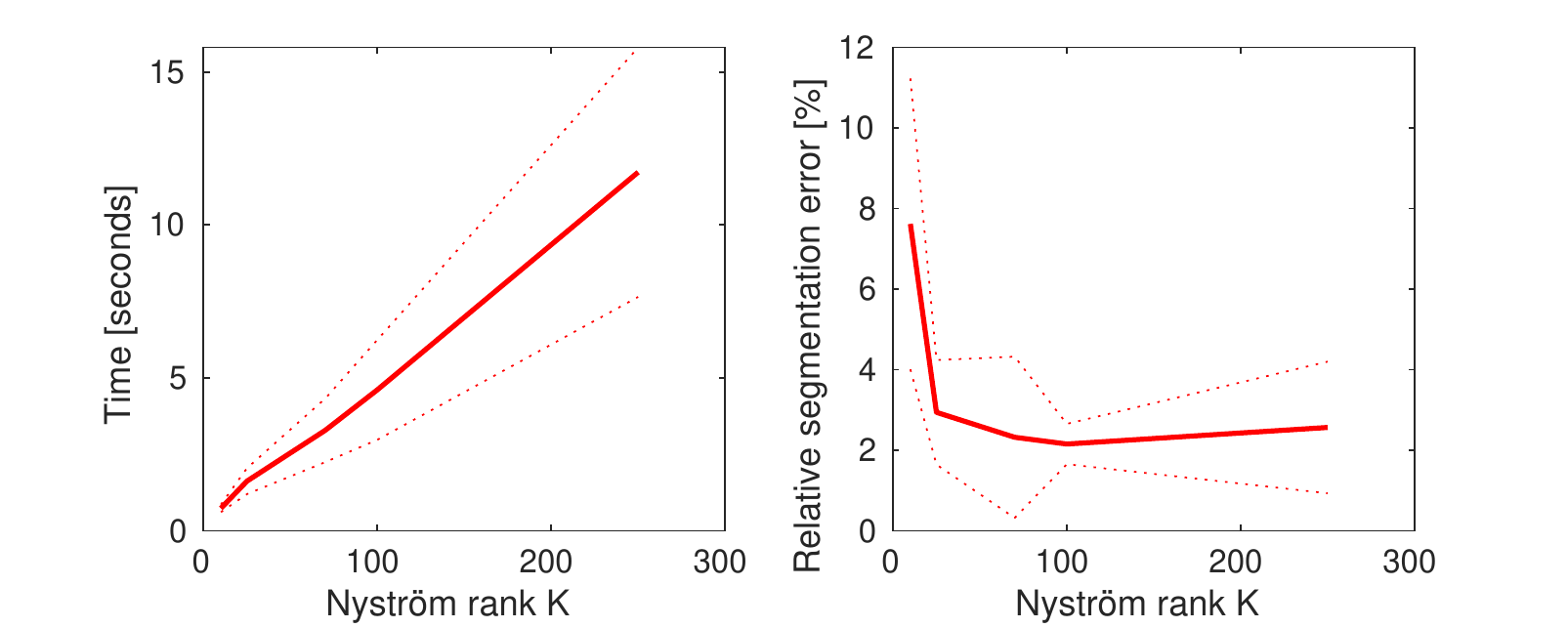}
\caption{$k_b=10$, $k=5$, symmetric normalised Laplacian.}
\end{subfigure}
\caption{Error and timing of $100$ independent segmentations of the two cows image (Example~\ref{ex_twocows}) with the MBO SDIE scheme. The solid lines represent the means averaged over 100 runs, the dotted lines show means $\pm$ standard deviations.}
\label{Fig_Error_timing} 
\end{figure}

\subsubsection{Uncertainty in the segmentation} \label{subsubse_uncertain}
Due to the randomised Nystr\"om approximation, our approximation of the SDIE scheme is inherently stochastic. 
Therefore, the segmentations that the algorithm returns are realisations of random variables. 
We now briefly study these random variables, especially with regard to $K$. We show pointwise mean and standard deviations of the binary labels in each of the left two columns of the four subfigures of Fig.~\ref{Fig_MonteCarlo}. In the remaining figures, we weight the original \emph{two cows} image with these means (varying continuously between label 1 for `cow' and label 0 for `not cow') and standard deviations.  For these experiments we use the same parameter set-up as Section~\ref{subsubse_error_timing_2cows}.

\begin{figure}[hpt]
\centering
\begin{subfigure}{0.49\textwidth}
\centering
         \includegraphics[width=\textwidth]{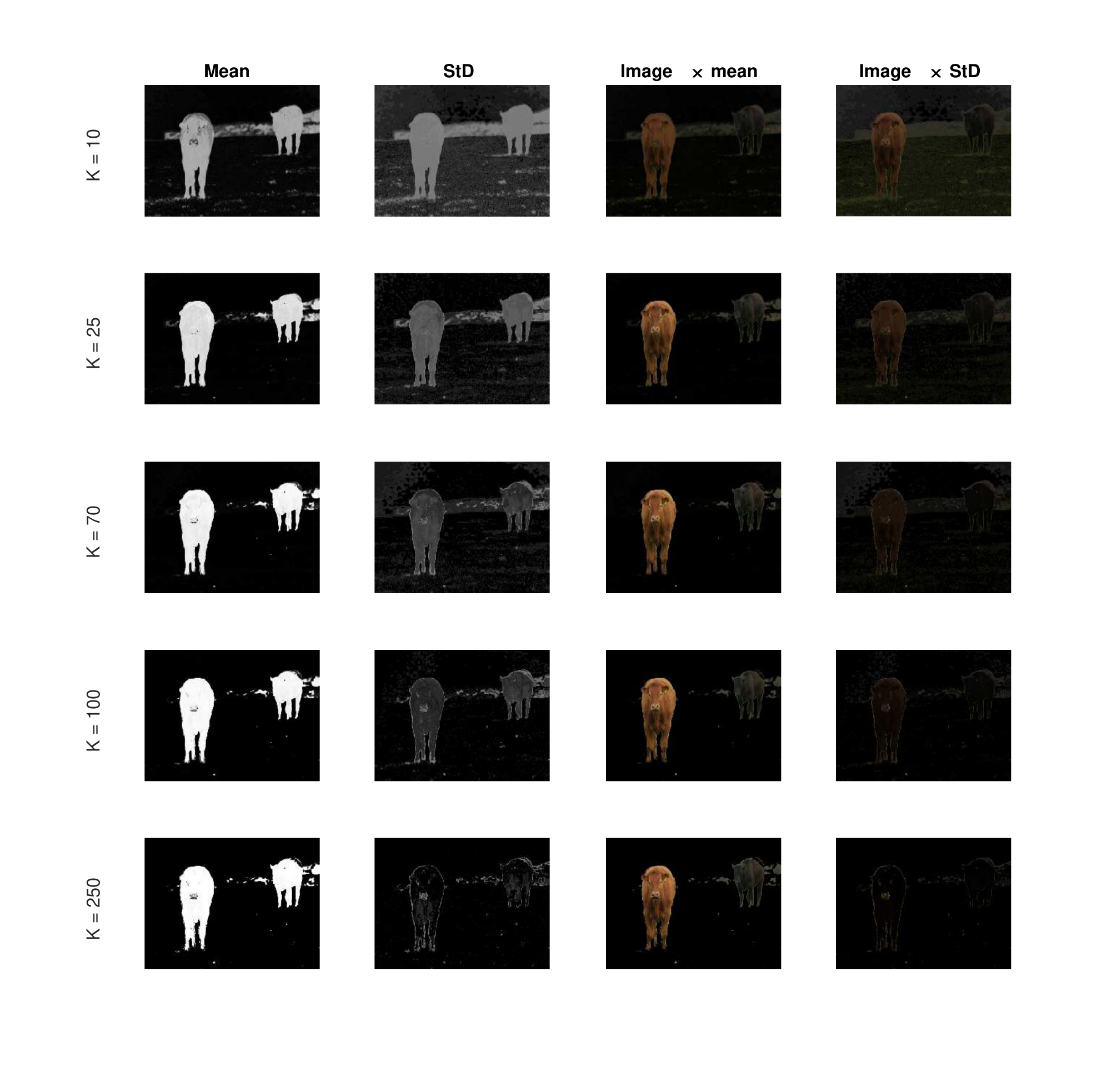}\vspace{-0.8cm}
\caption{$k_b=1$, $k = 1$, random walk Laplacian. }
\end{subfigure}
\begin{subfigure}{0.49\textwidth}
\centering
         \includegraphics[width=\textwidth]{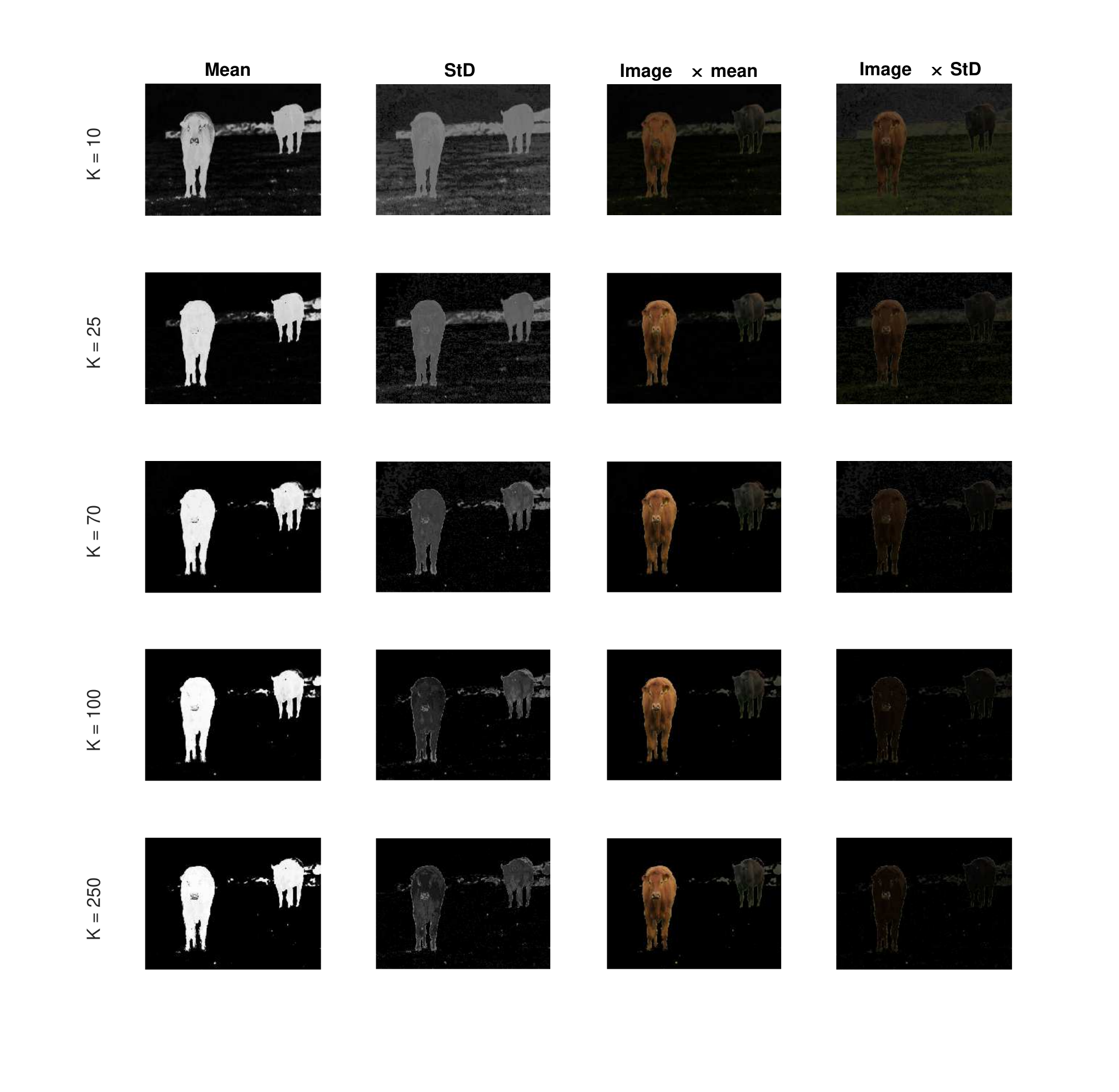}\vspace{-0.8cm}
\caption{$k_b=10$, $k = 5$, random walk Laplacian.
}
\end{subfigure}\\
\begin{subfigure}{0.49\textwidth}
\centering
         \includegraphics[width=\textwidth]{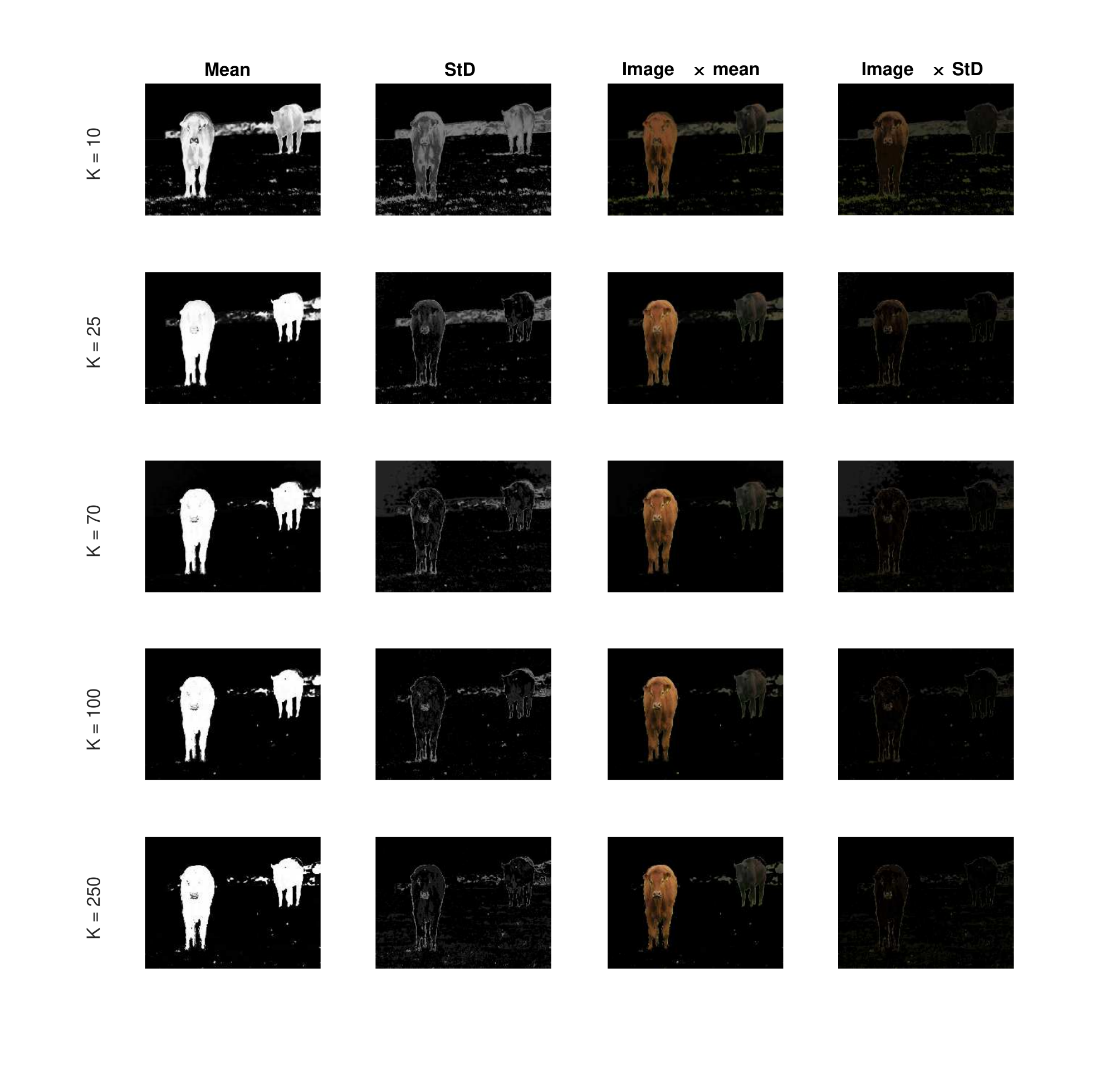}\vspace{-0.8cm}
\caption{$k_b=1$, $k = 1$, symmetric normalised Laplacian.}
\end{subfigure}
\begin{subfigure}{0.49\textwidth}
\centering
         \includegraphics[width=\textwidth]{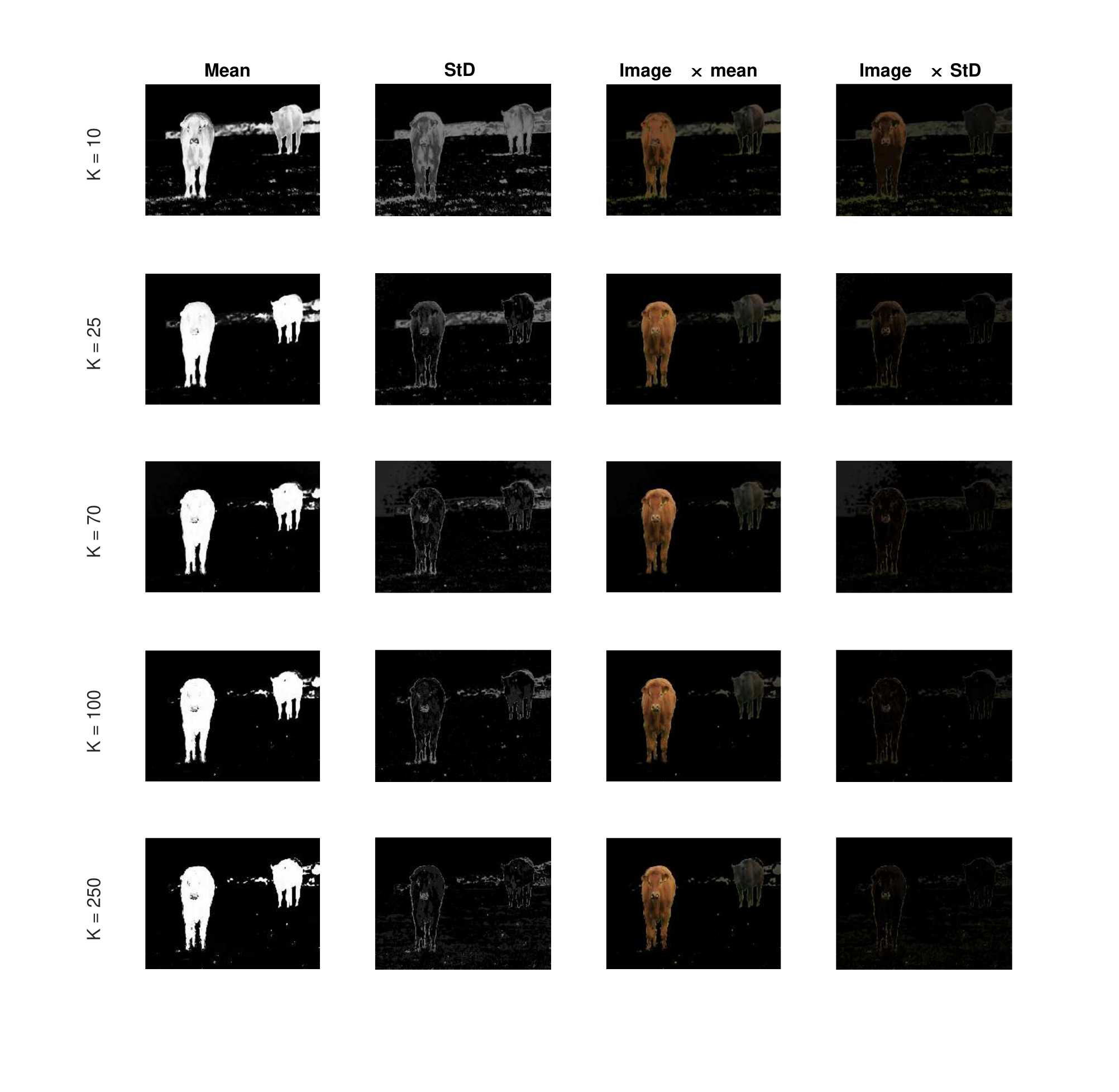}\vspace{-0.8cm}
\caption{$k_b=10$, $k=5$, symmetric normalised Laplacian.}
\end{subfigure}
\caption{Mean, standard deviation, and images weighted by mean and standard deviation of $100$ independent segmentations of Example~\ref{ex_twocows} with the MBO SDIE scheme, with set-up as in Section~\ref{subsubse_error_timing_2cows}.}
\label{Fig_MonteCarlo} 
\end{figure}

We make several observations. First, as $K$ increases we see less variation. This is what we expect, as when $K=|V|$ the method is deterministic so has no variation. Second, the type of normalisation of the Laplacian has an effect: the symmetric normalised Laplacian leads to less variation than the random walk Laplacian. Third, the parameters $k_b$ and $k$  appear to have no major effect within the range tested. Finally, looking at the figures with rather large $K$, we observe that the standard deviation of the labels is high in the areas of the figure in which there is indeed uncertainty in the segmentation, namely the boundaries of the cows and the parts of the wall with similar colour to the dark cow. Determining the exact position of the boundary of a cow on a pixel-by-pixel level is indeed also difficult for a human observer. Moreover, the SDIE scheme usually confuses the wall in the background for a cow. Hence, a large standard deviation here reflects that the estimate is uncertain. 

This is of course not a rigorous Bayesian uncertainty quantification, as for instance is given in \cite{GraphUQ,BayesianGraphs} for graph-based learning. However the use of stochastic algorithms for inference tasks, and the use of their output as a method of uncertainty quantification, has for instance been motivated by \cite{SGDBayes}.

\subsection{Greyscale}
We now move on to Example~\ref{ex_twocows_gs}, the \emph{greyscale} problem. We will especially use this example to study the influence of the parameters $\hat\mu$ and $\sigma$. The parameter $\hat\mu > 0$ determines the strength of the fidelity term in the ACE. From a statistical point of view, we can understand a choice of $\hat\mu$ as an assumption on the statistical precision (i.e. the inverse of the variance of the noise) of the reference~$\tilde f$ (see \cite[Section 3.3]{GraphUQ}
for details). Thus, a small $\mu$ should lead to a stronger regularisation coming from the Ginzburg--Landau functional, and a large $\mu$ leads to more adherence to the reference. The parameter $\sigma > 0$ is the `standard deviation' in the Gaussian kernel $\Omega$ used to build the weight matrix $\omega$. For our methods we must not choose too small a $\sigma$, as otherwise the weight matrix becomes sparse (up to some precision), and so the Nystr\"om submatrix $\omega_{XX}$ has a high probability of being ill-conditioned.\footnote{However, in such a case this sparsity can be exploited using Raleigh–Chebyshev \cite{RC} methods as in \cite{MKB}, or \textsc{Matlab} sparse matrix algorithms as in \cite{BF}. This lies beyond the scope of this paper.} If $\sigma$ is too large then the graph structure no longer reflects the features of the image.

\begin{figure}[hpt]
\centering
\begin{subfigure}{0.49\textwidth}
\centering
         \includegraphics[width=\textwidth]{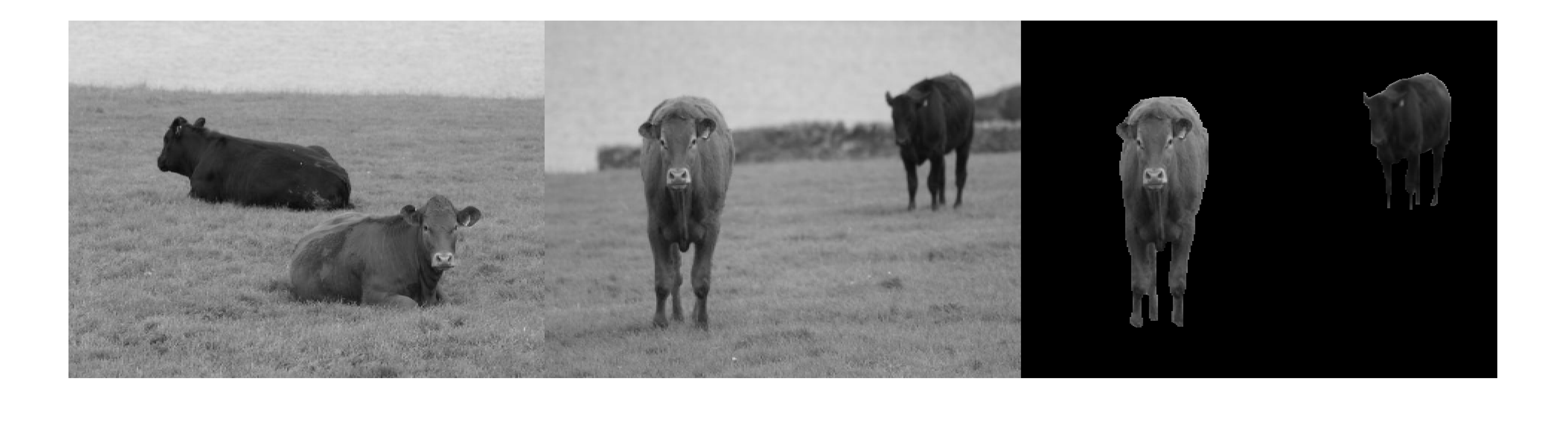}
\end{subfigure} \\
\begin{subfigure}{0.49\textwidth}
\centering
        \includegraphics[width=\textwidth]{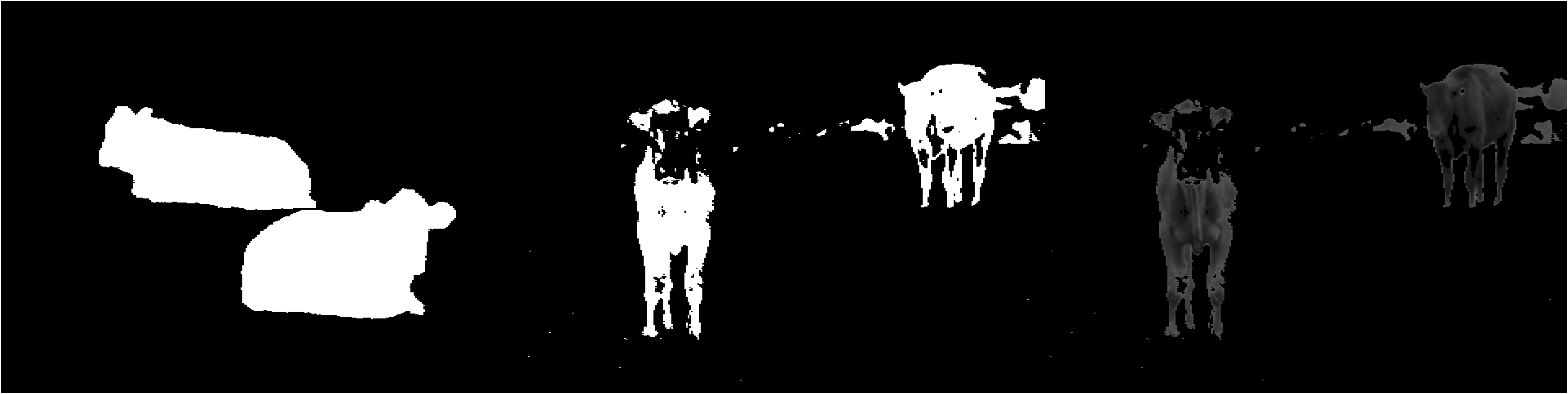} 
 \caption{$\hat\mu = 100, \sigma = 50$, error = 5.7868 \%, time = 6.9 sec.}
\end{subfigure}
 \begin{subfigure}{0.49\textwidth}
\centering
         \includegraphics[width=\textwidth]{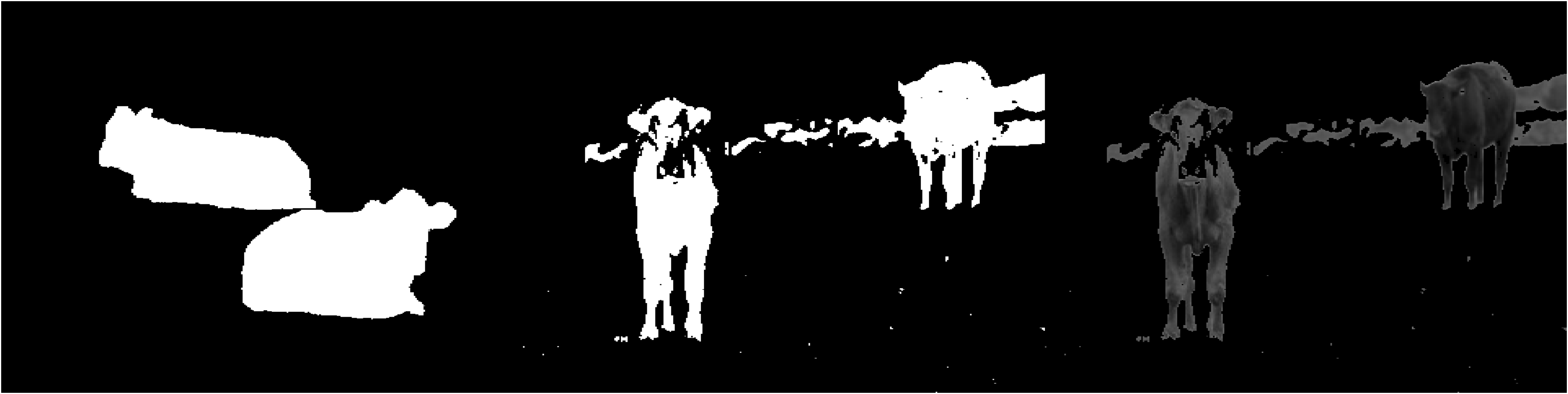} 
\caption{$\hat\mu = 150, \sigma = 35$, error = 5.8245 \%, time = 7.0 sec.}
\end{subfigure}
\caption{MBO SDIE segmentations for the \emph{greyscale} segmentation task. In the centred top figure, we show the reference data image, the image to be segmented, and the image masked with the ground truth segmentation, cf. Fig.~\ref{Fig_twocows}. The other figures show the reference $\tilde f$, the segmentation returned by the algorithm, and the original images masked with the segmentation.}
\label{Fig_Results_greysc} 
\end{figure}

In the following, we set $\varepsilon = \tau = 0.00024$, $k_b = 10$, $k = 5$, and $\delta = 10^{-10}$. To get reliable results we choose a rather large $K$, $K=200$, and therefore (by the discussion in Section \ref{subsubse_uncertain}) we use the random walk Laplacian. We will qualitatively study single realisations of the inherently stochastic SDIE algorithm. We vary $\hat\mu \in \{50, 100, 150, 200\}$ and $\sigma \in \{20, 35, 50\}$. We show the best results from these tests in  Fig.~\ref{Fig_Results_greysc}. Moreover, we give a comprehensive overview of all tests and the progression of the SDIE scheme in Fig.~\ref{Fig_Progression_GS}.
\begin{figure}[htp]
    \centering
    \begin{subfigure}{0.24\textwidth}
    \includegraphics[width=\textwidth]{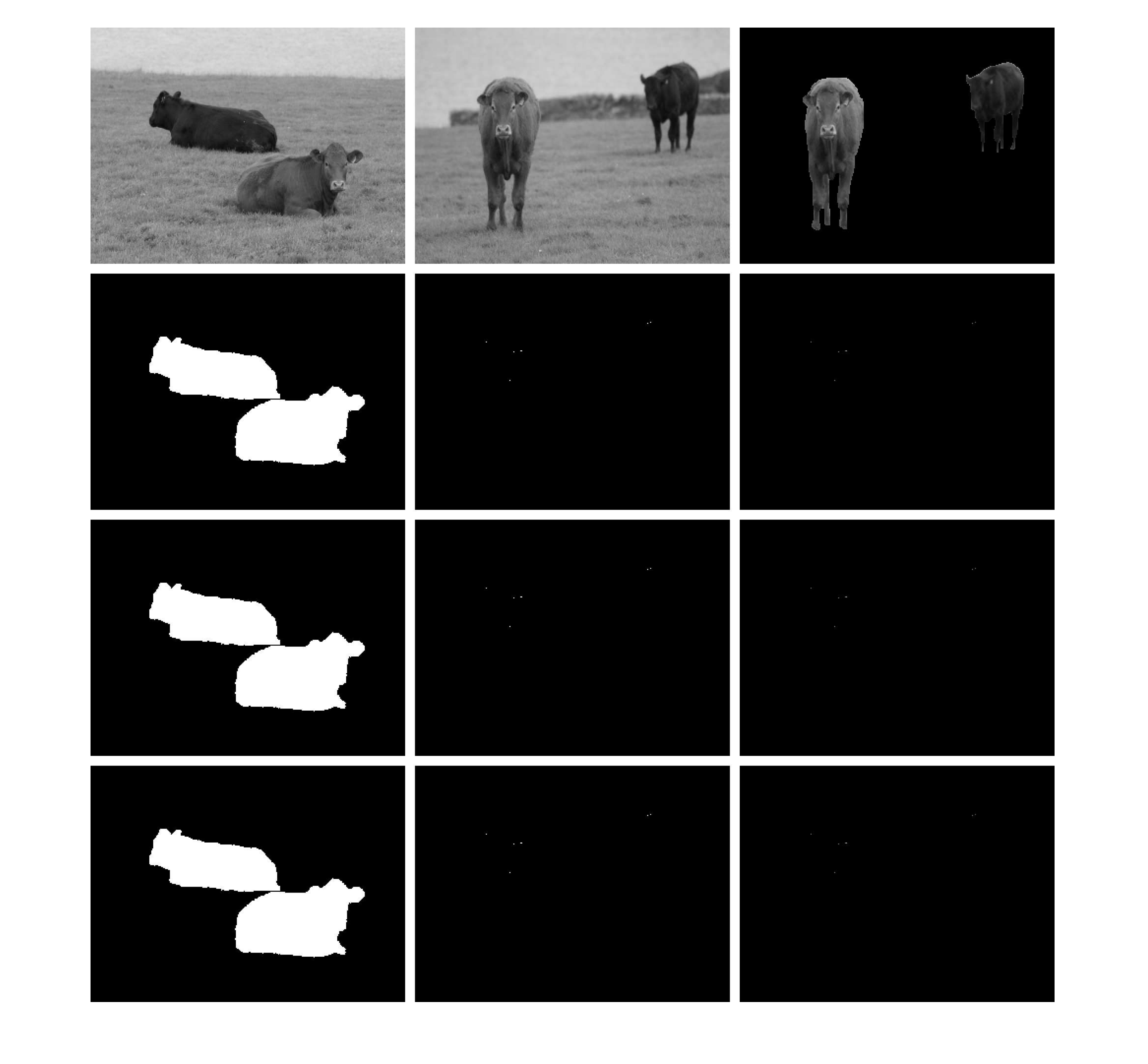}\vspace{-0.25cm}
    \caption{$\hat\mu = 50, \sigma = 20$, error = 12.7887 \%, time = 7.0 sec.}\vspace{0.25cm}
    \end{subfigure}
       \begin{subfigure}{0.24\textwidth}
    \includegraphics[width=\textwidth]{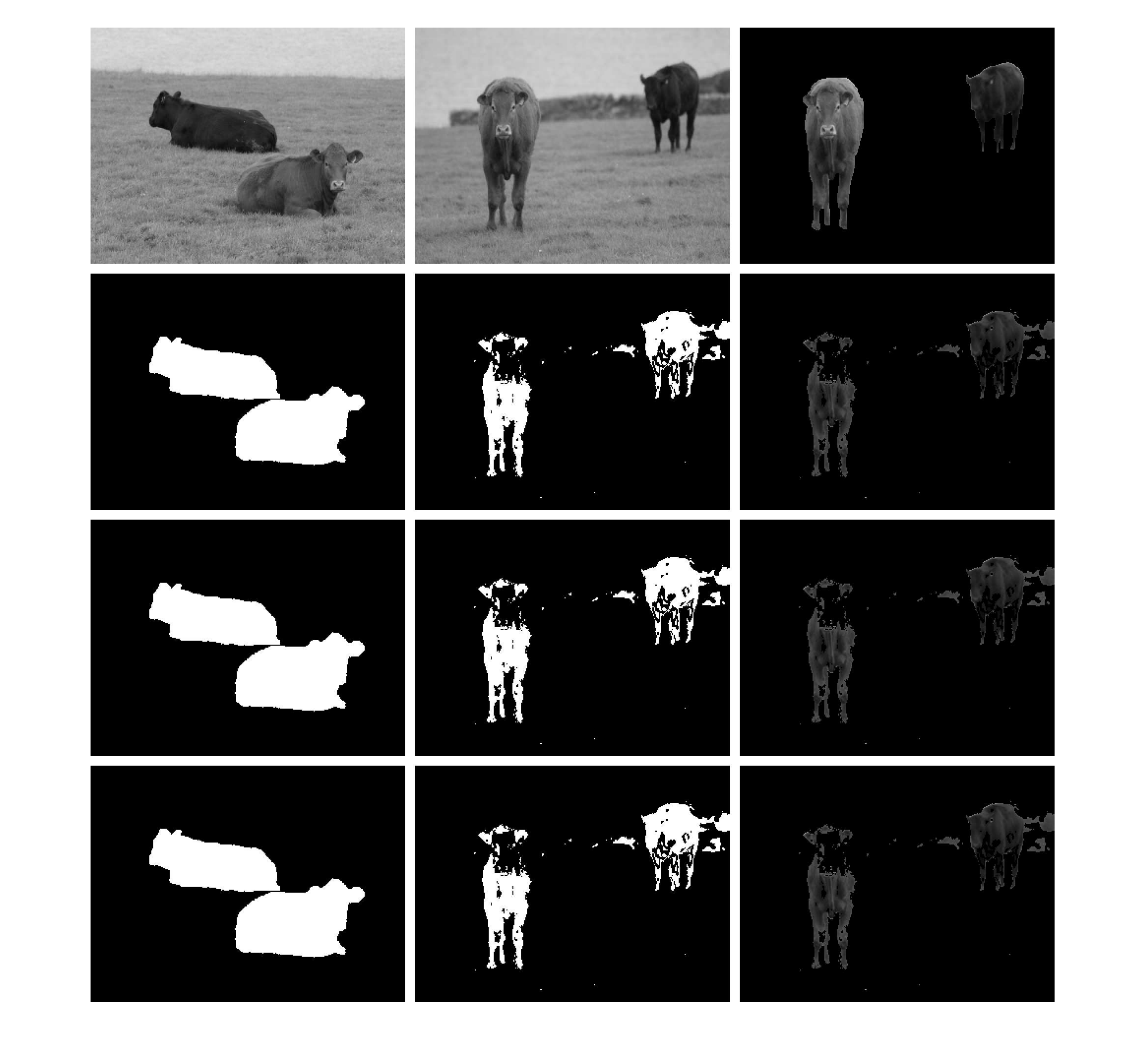}\vspace{-0.25cm}
    \caption{$\hat\mu = 100, \sigma = 20$, error = 6.0492 \%, time = 7.0 sec.}\vspace{0.25cm}
    \end{subfigure}
       \begin{subfigure}{0.24\textwidth}
    \includegraphics[width=\textwidth]{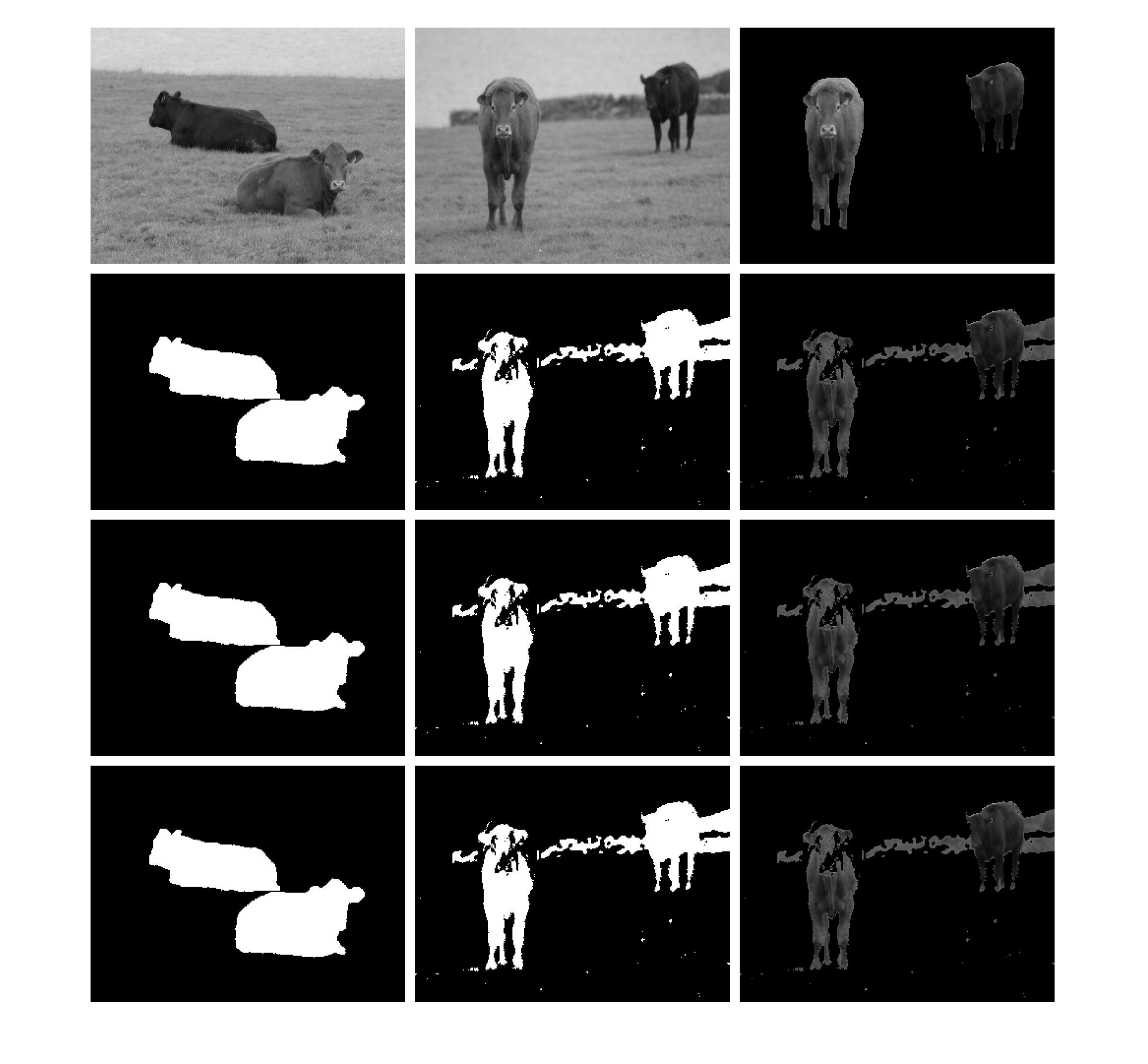}\vspace{-0.25cm}
    \caption{$\hat\mu = 150, \sigma = 20$, error = 5.9326 \%, time = 7.1 sec.}\vspace{0.25cm}
    \end{subfigure}
       \begin{subfigure}{0.24\textwidth}
    \includegraphics[width=\textwidth]{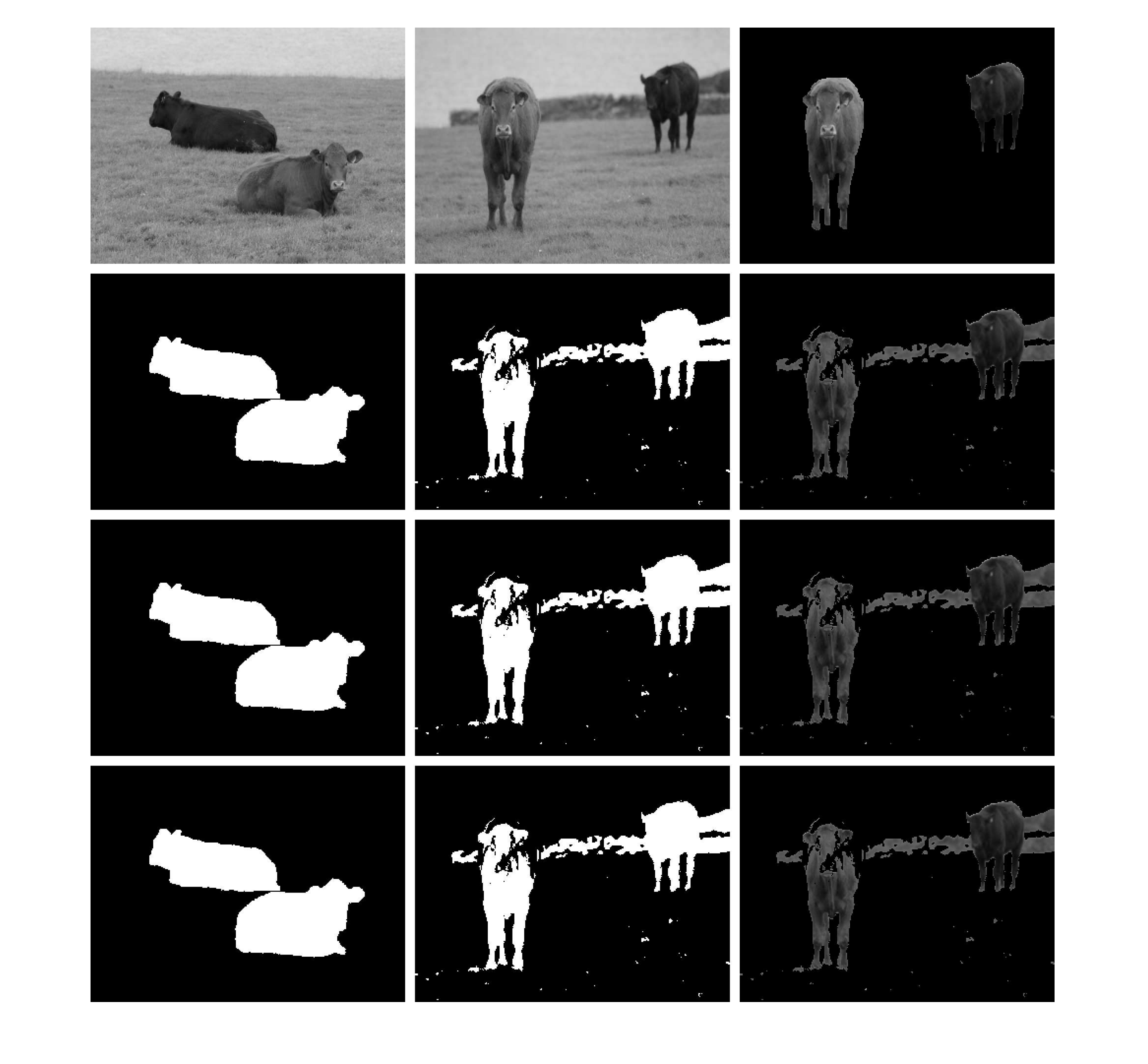}\vspace{-0.25cm}
    \caption{$\hat\mu = 200, \sigma = 20$, error = 6.293 \%, time = 13.3 sec.}\vspace{0.25cm}
    \end{subfigure} \\
        \begin{subfigure}{0.24\textwidth}
    \includegraphics[width=\textwidth]{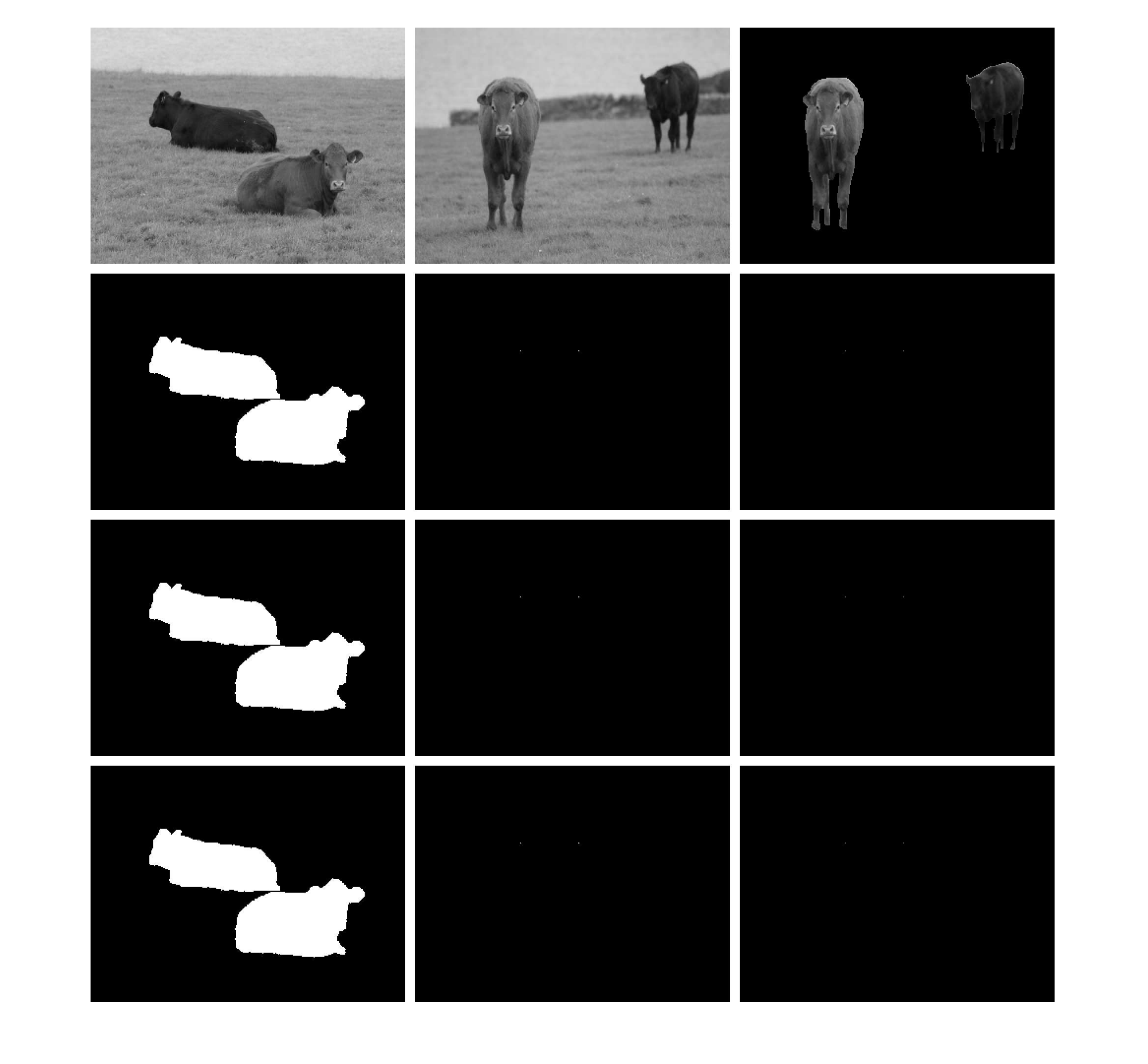}\vspace{-0.25cm}
    \caption{$\hat\mu = 50, \sigma = 35$, error = 12.7969 \%, time = 7.1 sec.}\vspace{0.25cm}
    \end{subfigure}
       \begin{subfigure}{0.24\textwidth}
    \includegraphics[width=\textwidth]{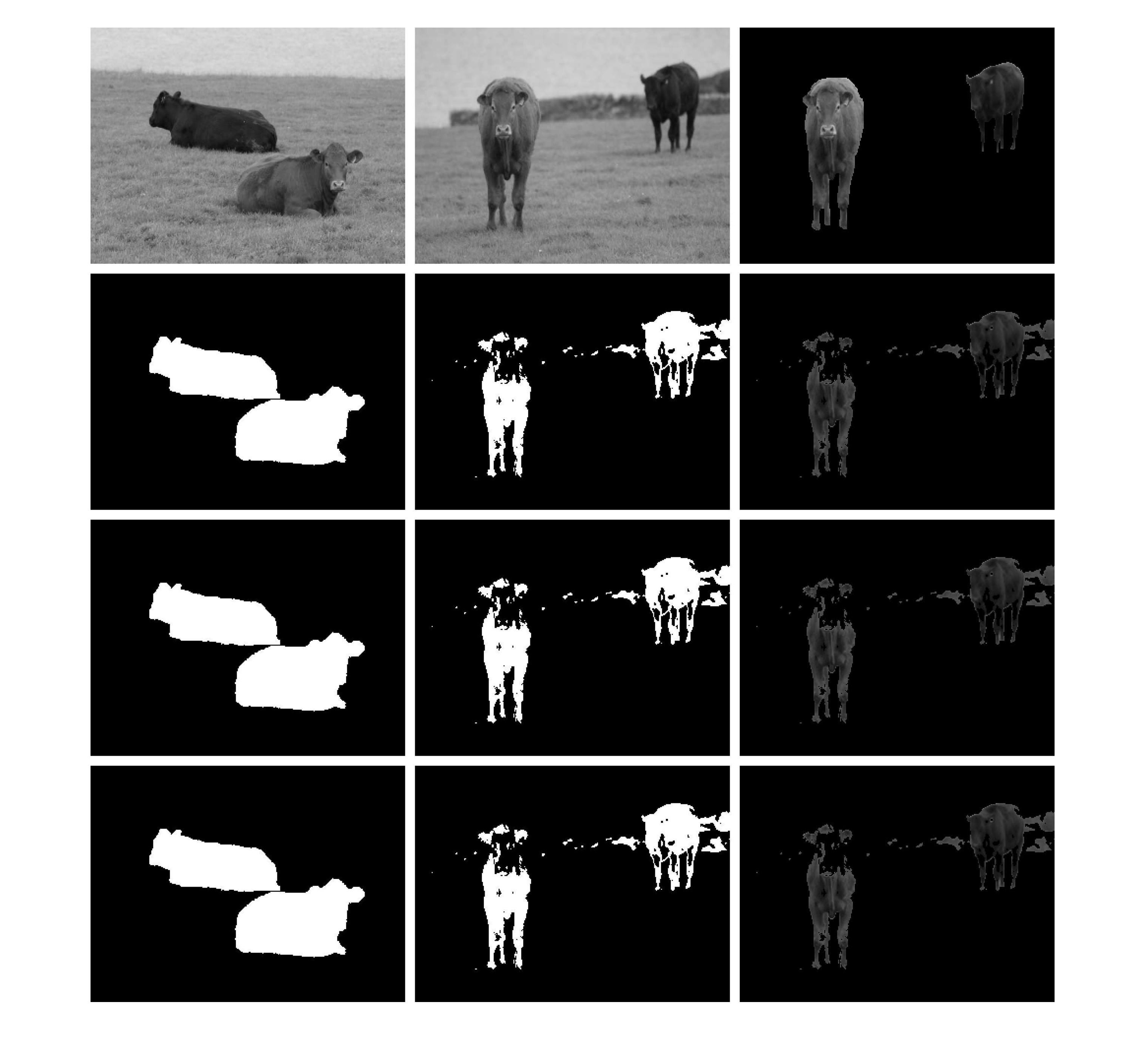}\vspace{-0.25cm}
    \caption{$\hat\mu = 100, \sigma = 35$, error = 5.9043 \%, time = 7.2 sec.}\vspace{0.25cm}
    \end{subfigure}
       \begin{subfigure}{0.24\textwidth}
    \includegraphics[width=\textwidth]{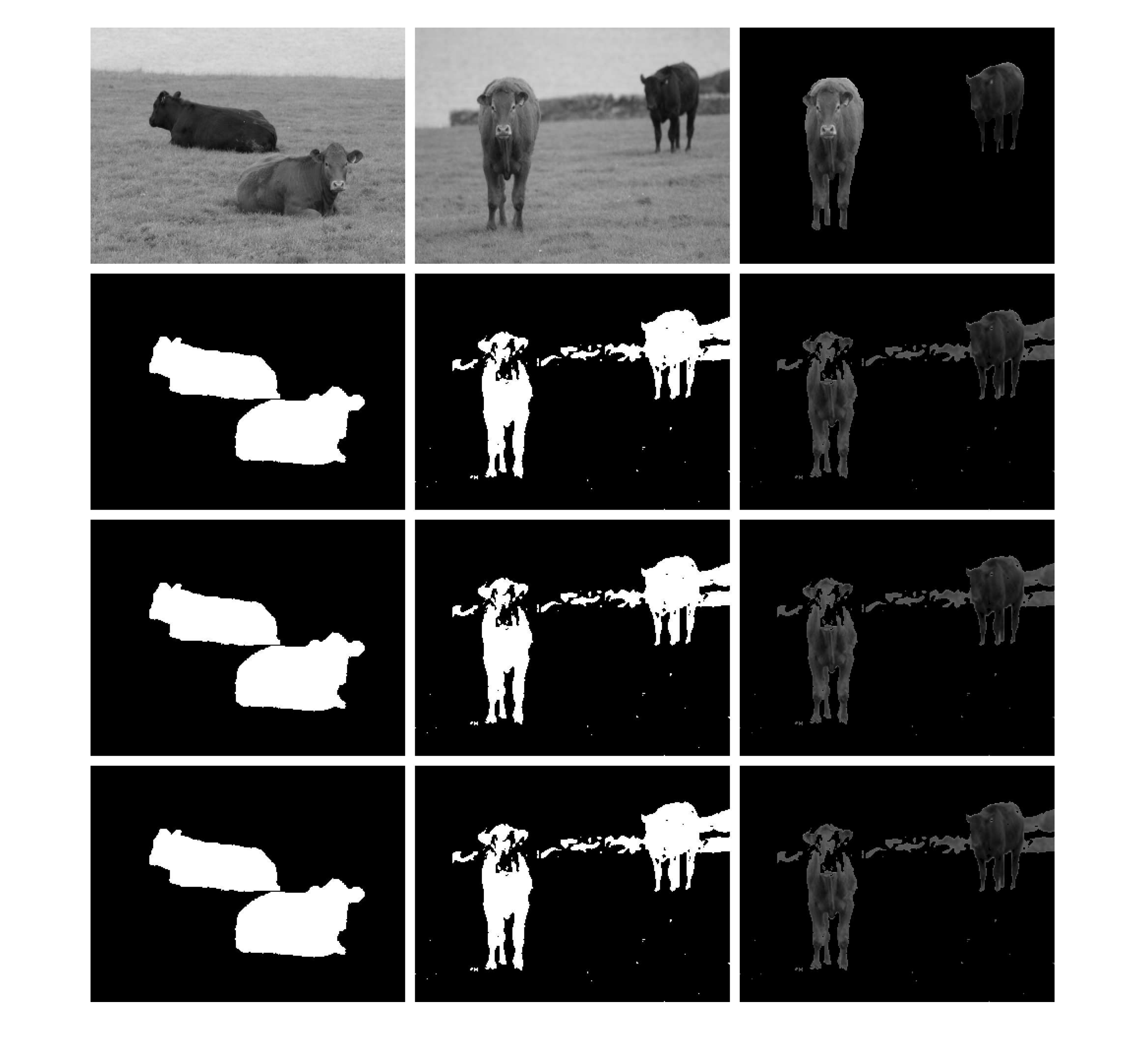}\vspace{-0.25cm}
    \caption{$\hat\mu = 150, \sigma = 35$, error = 5.8245 \%, time = 7.0 sec.}\vspace{0.25cm}
    \end{subfigure}
       \begin{subfigure}{0.24\textwidth}
    \includegraphics[width=\textwidth]{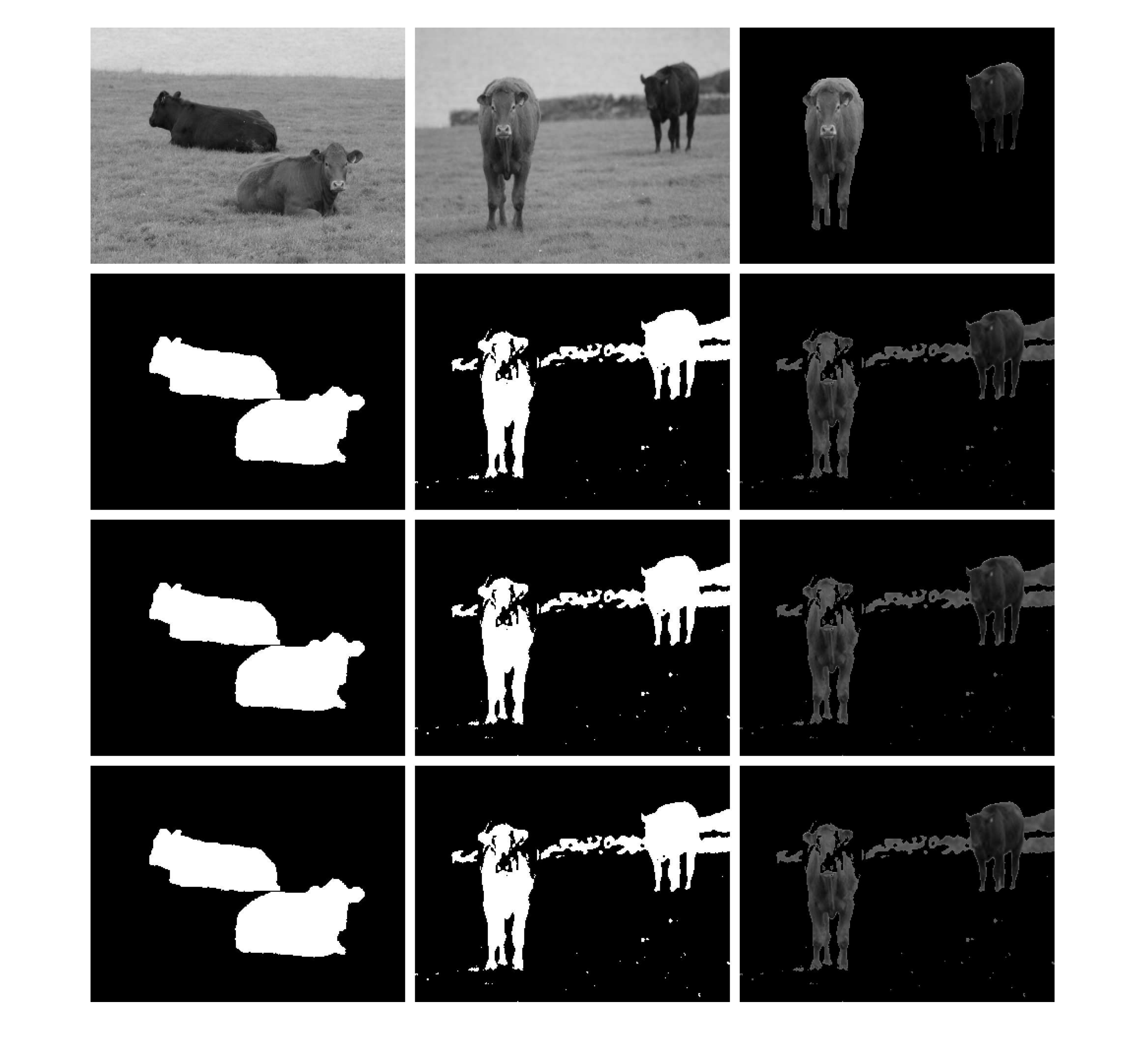}\vspace{-0.25cm}
    \caption{$\hat\mu = 200, \sigma = 35$, error = 6.1882 \%, time = 7.1 sec.}\vspace{0.25cm}
    \end{subfigure} \\
        \begin{subfigure}{0.24\textwidth}
    \includegraphics[width=\textwidth]{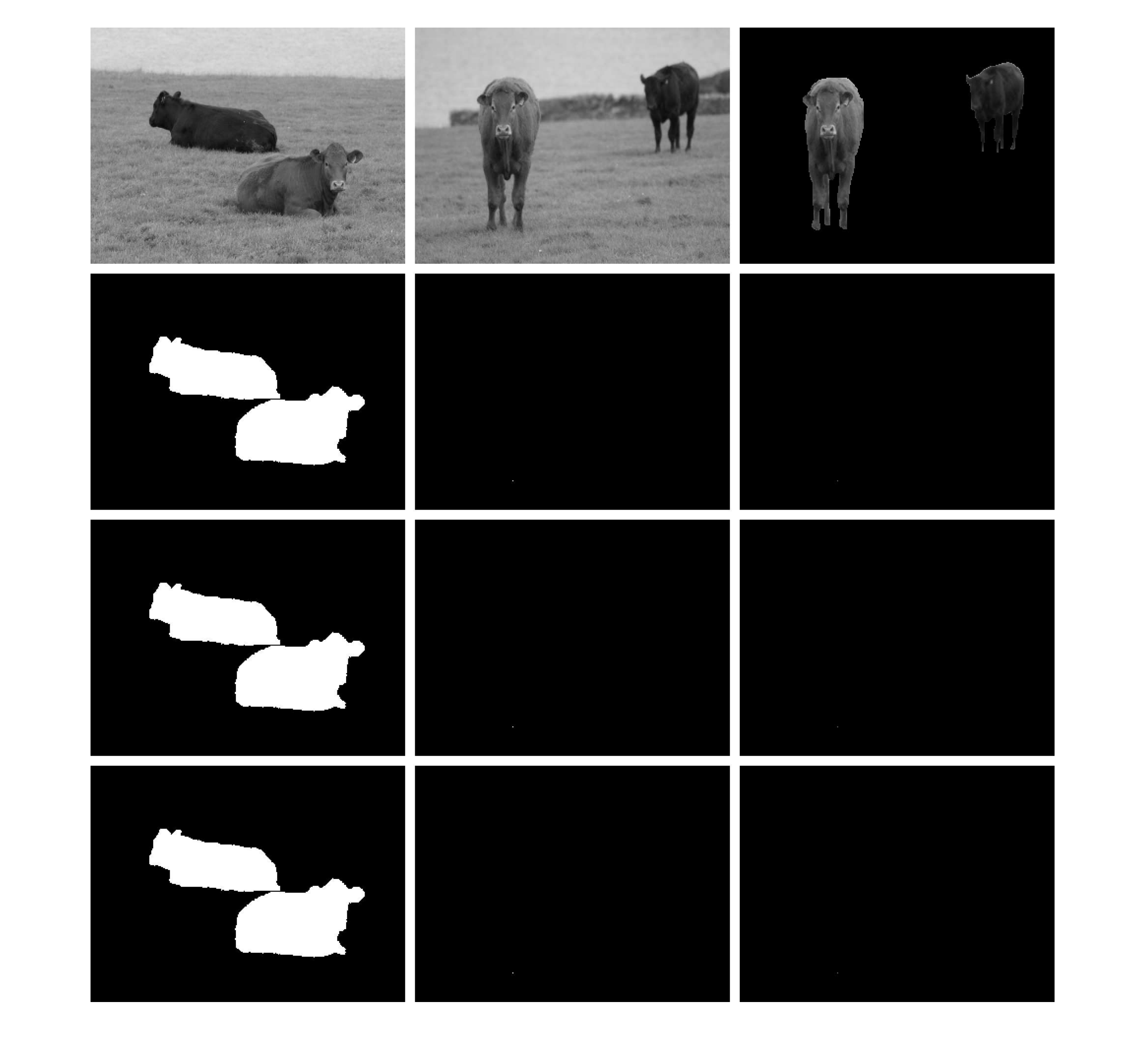}\vspace{-0.25cm}
    \caption{$\hat\mu = 50, \sigma = 50$, error = 12.7995 \%, time = 7.0 sec.}\vspace{0.25cm}
    \end{subfigure}
       \begin{subfigure}{0.24\textwidth}
    \includegraphics[width=\textwidth]{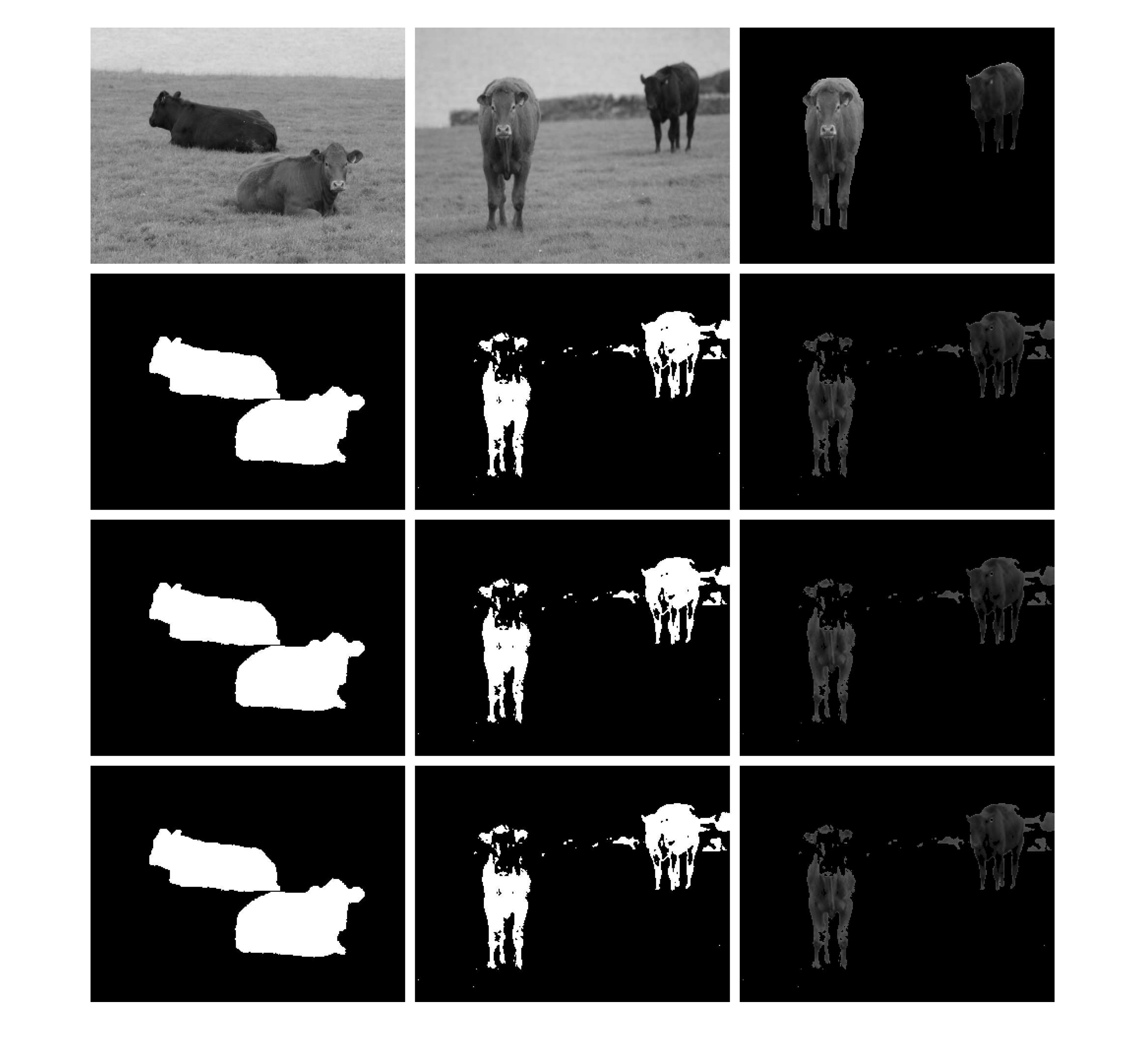}\vspace{-0.25cm}
    \caption{$\hat\mu = 100, \sigma = 50$, error = 5.7868 \%, time = 6.9 sec.}\vspace{0.25cm}
    \end{subfigure}
       \begin{subfigure}{0.24\textwidth}
    \includegraphics[width=\textwidth]{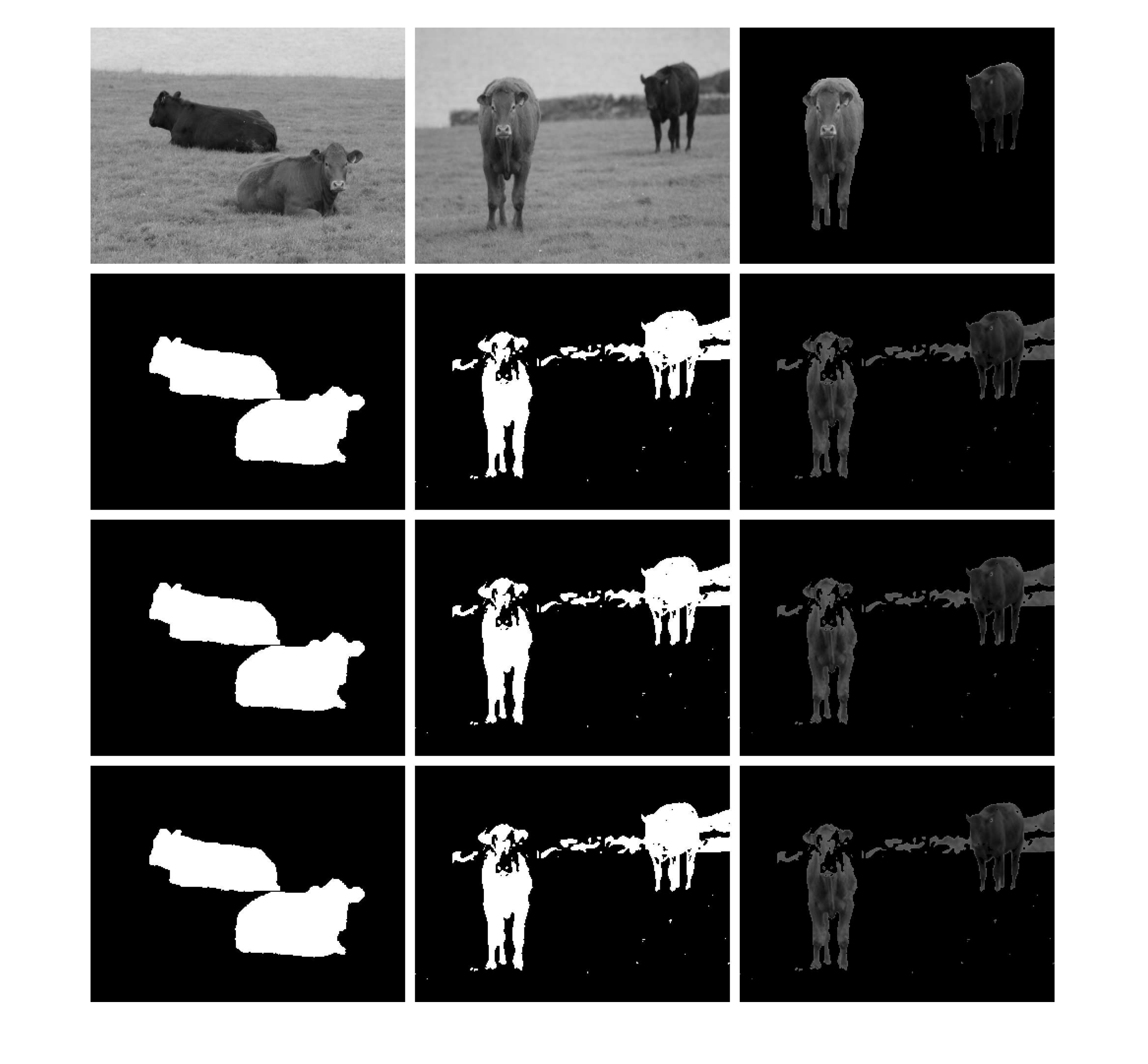}\vspace{-0.25cm}
    \caption{$\hat\mu = 150, \sigma = 50$, error = 5.8861 \%, time = 6.9 sec.}\vspace{0.25cm}
    \end{subfigure}
       \begin{subfigure}{0.24\textwidth}
    \includegraphics[width=\textwidth]{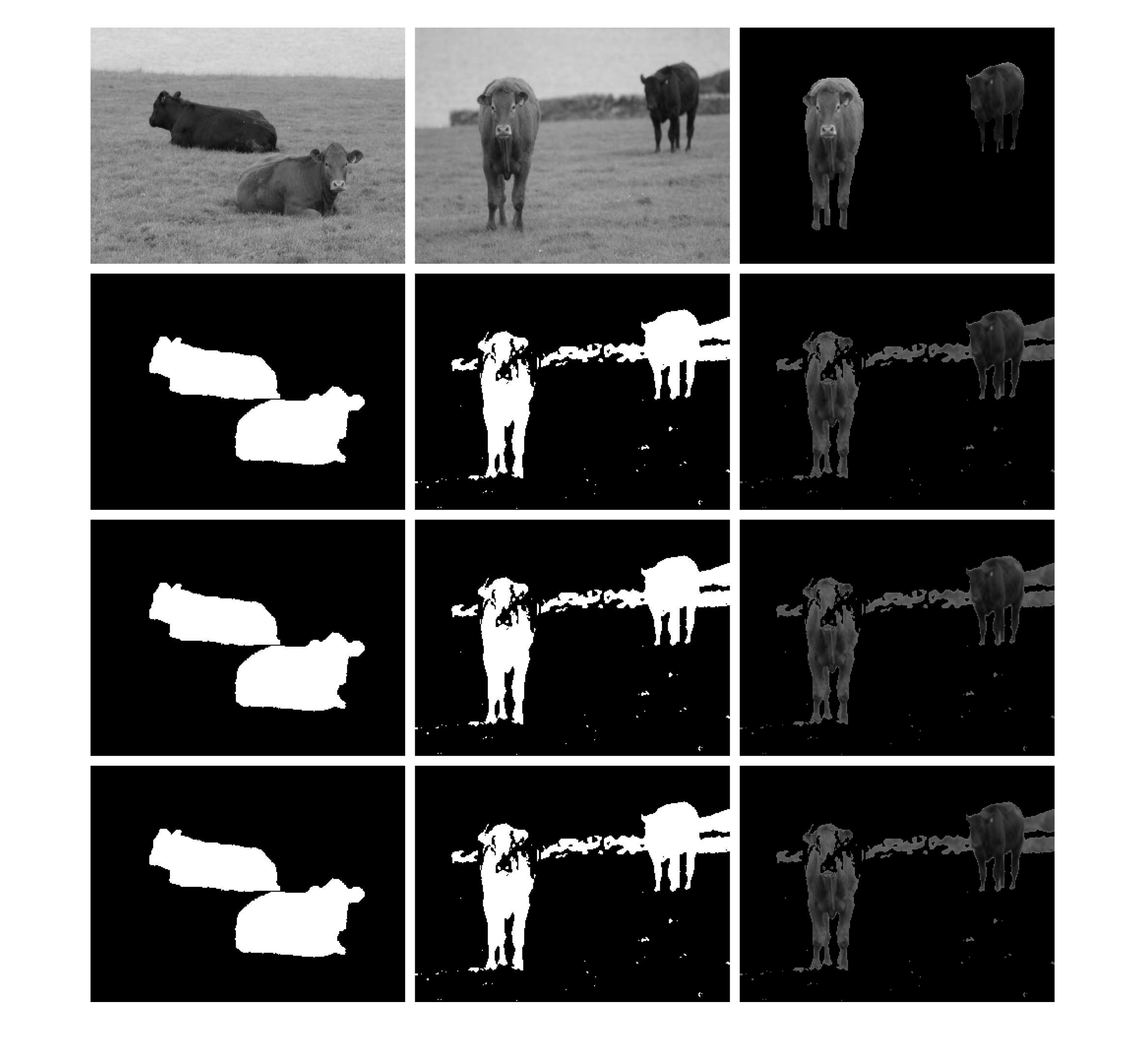}\vspace{-0.25cm}
    \caption{$\hat\mu = 200, \sigma = 50$, error = 6.291 \%, time = 7.1 sec.}\vspace{0.25cm}
    \end{subfigure} 
    \caption{Progression of the MBO SDIE scheme for the \emph{greyscale} segmentation task. In each subfigure: The first row shows the reference data, the image to be segmented, and the ground truth segmentation. The middle rows, showing the reshaped label vector $u_n$ and the image masked by the label, each represent one iteration of the considered algorithm, to be read from top to bottom. The last row gives the state returned by the scheme, i.e. the state satisfying the termination criterion.}
    \label{Fig_Progression_GS}
\end{figure}
We observe that this segmentation problem is indeed considerably harder than the \emph{two cows} problem, as we anticipated after stating Example~\ref{ex_twocows_gs}. The difference in shade between the left cow and the wall is less visible than in Example~\ref{ex_twocows}, and the left cow's snout is less identifiable as part of the cow. Thus, the segmentation errors we incur are about 3 times larger than before. There is hardly any visible influence from changing $\sigma$ in the given range. However, $\hat\mu$ has a significant impact on the result. Indeed,  for $\mu=50$ the algorithm does not find any segmentation. For $\hat\mu \geq 100$, we get more practical segmentations. Interestingly, for $\hat\mu = 150$ and $\hat\mu = 200$ we get almost all of the left cow, but misclassify most of the wall in the background; 
for $\hat\mu = 100$, we miss a large part of the left cow, but classify more accurately the wall.
The interpretation of $\hat\mu$ as the statistical precision of the reference explains this effect well. For $\hat\mu = 100$, we assume that the reference is less precise, leading us (due to the smoothing effect of the Ginzburg--Landau regularisation) to classify accurately most of the wall. With $\hat\mu \geq 150$, we assume that the reference is more precise, leading us to misclassify the wall (due to its similarity to the cows in the reference data image) but classify accurately more of the cows. At $\hat\mu = 200$, this effect even leads to a larger total segmentation error.
The runtimes are approximately equal across all choices of parameters, except for $\hat\mu = 200$ and $\sigma = 20$ for which the runtime is much higher.

\subsection{Many cows}

\begin{figure}[ht]
\centering
\begin{subfigure}{0.99\textwidth}
\centering
         \includegraphics[width=\textwidth]{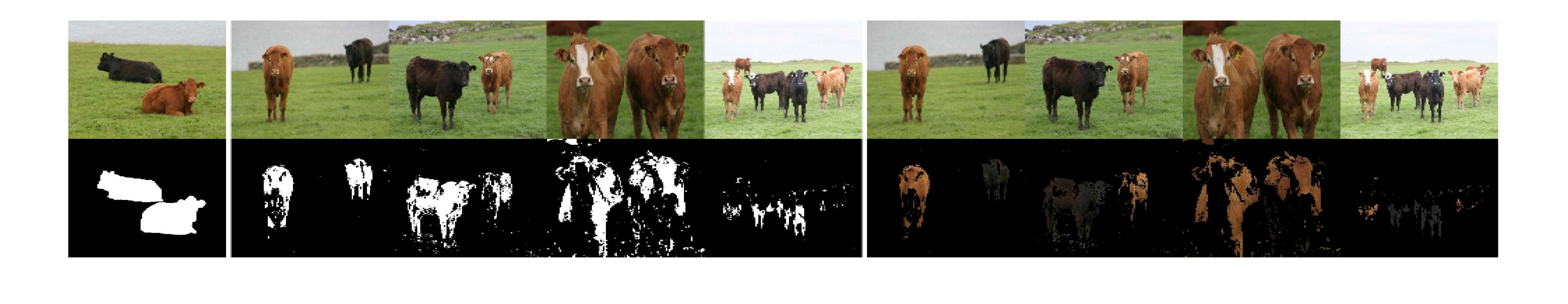}\vspace{-0.5cm}
         \caption{Segmentation with parameters $\varepsilon = \tau = 0.003, K = 70, k_b = 1, k = 1, \hat\mu = 30, \sigma = 35$, and the symmetric normalised Laplacian. Elapsed time for segmentation: 9.1 sec.}
\end{subfigure} \\
\begin{subfigure}{0.99\textwidth}
\centering
        \includegraphics[width=\textwidth]{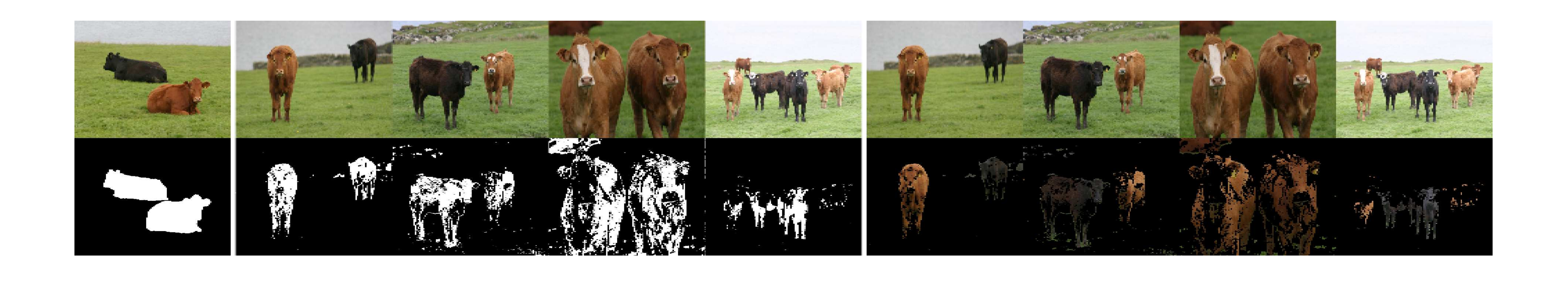}\vspace{-0.5cm}
\caption{Segmentation with parameters $\varepsilon = \tau = 0.00025, K = 100, k_b = 1, k = 1, \hat\mu = 500, \sigma = 35$, and the symmetric normalised Laplacian. Elapsed time for segmentation: 14.0 sec.}
\end{subfigure}\\
 \begin{subfigure}{0.99\textwidth}
\centering
         \includegraphics[width=\textwidth]{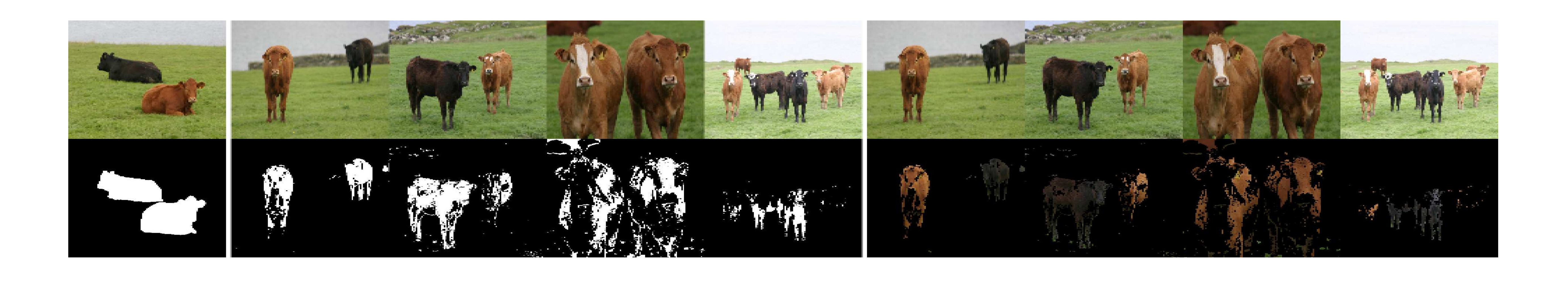}\vspace{-0.5cm}
\caption{Segmentation with parameters $\varepsilon = \tau = 0.00025, K = 100, k_b = 10, k = 10, \hat\mu = 400, \sigma = 35$, and the symmetric normalised Laplacian. Elapsed time for segmentation: 19.1 sec.}
\end{subfigure}
\caption{Segmentations of the MBO scheme for the \emph{many cows} segmentation task. In each subfigure: the top row shows the reference data image and twice the image that is to be segmented, and the bottom row shows the reference $\tilde f$, the segmentation returned by the respective algorithm, and the original image masked with the segmentation.}
\label{Fig_Results_manycows} 
\end{figure}

We finally study the \emph{many cows} example, i.e. Example~\ref{ex_manycows}. The main differences to the earlier examples are the larger size of the image that is to be segmented and the variety within it. We first comment on the size. The image is given by a $480 \times 2560 \times 3$ tensor, which is a manageable size. The graph Laplacian, however, is a dense matrix with $1.536 \times 10^6$ rows and columns. A matrix of this size requires $17.6$ TB of memory to be constructed in \textsc{Matlab}R2019a. This image is much more difficult to segment than the previous examples, in which the cows in the image to be segmented are very similar to the cows in the reference data. Here, we have concatenated images of cows that look very different, e.g. cows with a white blaze on their nose.

As the \emph{two cows} image is part of the \emph{many cows} image, we first test the algorithmic set-up that was successful at segmenting the former. We show the result (and remind the reader of the set-up) in Fig.~\ref{Fig_Results_manycows}(a). The segmentation obtained in this experiment is rather mediocre---even the \emph{two cows} part is only coarsely reconstructed.  We present two more attempts at segmenting the \emph{many cows} image in Fig.~\ref{Fig_Results_manycows}: we choose $\varepsilon = \tau = 0.00025$, a slightly larger Nystr\"om rank $K = 100$, and vary $(k_b, k, \mu) \in \{(1,1,500), (10,10,400)\}$. In both cases, we obtain a considerably better segmentation of the image. In the case where $\hat\mu = 500$, we see a good, but slightly noisy segmentation of the brown and black parts of the cows. In the case where $\hat\mu = 400$, we reduce the noise in the segmentation, but then also misclassify some parts of the cows. The blaze (and surrounding fur) is not recognised as `cow' in any of the segmentations, likely because the blaze is not represented in the reference data image.
The influence of the set-up on the runtimes is now much more pronounced. For the given segmentations, however, all the runtimes are at most a factor of eight larger than the smallest runtimes in the previous examples, despite the larger image size.

\section{Conclusion}

Extending the set-up and results in \cite{Budd}, we defined the continuous-in-time graph Allen--Cahn equation (ACE) with fidelity forcing as in \cite{BF} and a semi-discrete implicit Euler (SDIE) time discretisation scheme for this equation. We proved well-posedness of the ACE and showed that solutions of the SDIE scheme converge to the solution of the ACE. Moreover, we proved that the graph Merriman--Bence--Osher (MBO) scheme with fidelity forcing \cite{MKB} is a special case of an SDIE scheme.

We provided an algorithm to solve the SDIE scheme, which---besides the obvious extension from the MBO scheme to the SDIE scheme---differs in two key places from the existing algorithm for graph MBO with fidelity forcing: it implements the Nystr\"om extension via a QR decomposition and it replaces the Euler discretisation of the diffusion step with a computation based on the Strang formula for matrix exponentials. The former of these changes appears to have been a quite significant improvement: in experiments the Nystr\"om-QR method proved to be faster, more accurate, and more stable than the Nystr\"om method used in previous literature \cite{MKB,BF}, and it is less conceptually troubling than that method, as it does not involve taking the square root of a non-positive-semi-definite matrix. 

We applied this algorithm to a number of image segmentation examples concerning images of cows from the Microsoft Research Cambridge Object Recognition Image Database. We found that whilst the SDIE scheme yielded no improvement over the MBO scheme (and took longer to run in the non-MBO case) the other improvements that we made led to a substantial qualitative improvement over the segmentations of the corresponding examples in \cite{MKB,BF}. We furthermore investigated empirically various properties of this numerical method and the role of different parameters. In particular:
\begin{itemize}
    \item We found that the symmetric normalised Laplacian incurred consistently low segmentation error when approximated to a low rank, whilst the random walk Laplacian was more reliably accurate at higher ranks (where `higher' is still less than 0.1\% of the full rank). Thus for applications that require scalabity, and thus very low-rank approximations, we recommend using the symmetric normalised Laplacian.
    \item We investigated empirically the uncertainty inherited from the randomisation in the Nystr\"om extension. We found that the rank reduction and the normalisation of the graph Laplacian had the most influence on the uncertainty, and we furthermore observed that at higher ranks the segmentations had high variance at those pixels which are genuinely difficult to classify, e.g. the boundaries of the cows.
    \item We noted that the fidelity parameter $\mu$ corresponds to ascribing a statistical precision to the reference data. We observed that when the reference data were not fully informative, as in Examples \ref{ex_twocows_gs} and \ref{ex_manycows}, it was important to tune this parameter to get an accurate segmentation.
\end{itemize}

To conclude, we give a number of directions we hope to explore in future work, in no particular order. 

We seek to investigate the use of the very recent method of \cite{BSV} combined with methods such as the HMT method \cite{HMT} or the (even more recent) generalised Nystr\"om method \cite{Yuji} to further increase the accuracy of the low-rank approximations to the graph Laplacian. Unfortunately, we became aware of these works too near to finalising this paper to use those methods here.

We will seek to extend both our theoretical and numerical framework to multi-class graph-based classification, as considered for example in \cite{BayesianGraphs,GCP,Auction}. The groundwork for this extension was laid by the authors in  \cite[Section 6]{volumeBudd}.

This work dovetails with currently ongoing work of the authors with Carola-Bibiane Sch\"onlieb and Simone Parisotto exploring graph-based joint reconstruction and segmentation with the SDIE scheme. In practice, we often do not observe images directly, but must reconstruct an image from what we observe. For example, when the image is noisy, blurred, has regions missing, or we observe only a transform of the image, such as the output of a CT or MRI scanner. `Joint reconstruction and segmentation' (see \cite{jointrecon1,jointrecon2}) is the task of simultaneously reconstructing and segmenting an image, which can reduce computational time and improve the quality of both the reconstruction and the segmentation.

We will also seek to develop an SDIE classification algorithm that is scalable towards larger data sets containing not just one, but hundreds or thousands of reference data images. This can be attempted via data subsampling: rather than evolving the SDIE scheme with regard to the full reference data set, we use only a data subset that is randomly replaced by a different subset after some time interval has passed.  Data subsampling is the fundamental principle of various algorithms used in machine learning, such as stochastic gradient descent \cite{RobbinsMonro}. A subsampling strategy that is applicable in continuous-in-time algorithms has recently been proposed and analysed by \cite{Jonas}.

\appendix 
\section{An analysis of the method from \cite{MKB}} 
\label{MKBapp} 
In \cite{MKB}, the authors approximated $\mathcal{S}_\tau u$ by a semi-implicit Euler method for fidelity forced diffusion. That is, since $\mathcal{S}_\tau u$ is defined to equal $v(\tau)$ where $v$ is defined by
	\[\frac{dv}{dt} = -\Delta v  -M(v-\tilde f), \qquad v(0) = u, \] 
the authors of \cite{MKB} approximate trajectories of this ODE by a semi-implicit Euler method with time step $\delta t>0$ (such that $\tau/\delta t\in\mathbb{N}$), i.e. $v_0:=u$ and
\[
\frac{v_{k+1}-v_k}{\delta t} = -\Delta v_{k+1} -M(v_k-\tilde f)
\]
with solution (recalling that $f:=M\tilde f$)
\be \label{Eulersoln}
\begin{split}
v_{k+1} &= \left(I+\delta t \Delta\right)^{-1}\left(v_k - \delta t  M(v_k-\tilde f)\right)\\
&=\left(I+\delta t \Delta\right)^{-1}\left(I -\delta t  M\right)v_k+\delta t \left(I+\delta t \Delta\right)^{-1} f
\end{split}
\ee
and then \eqref{Eulersoln} leads to the approximation $\mathcal{S}_\tau u\approx v_{\tau/\delta t}$. 
To compute \eqref{Eulersoln}, they use the Nystr\"om decomposition to compute the leading eigenvectors and eigenvalues of $\Delta$. 
\begin{nb}
In fact, in \cite{MKB} the authors use $\Delta_s$ not $\Delta$. It makes no difference to this analysis which Laplacian is used.
\end{nb}
Given $\Delta\approx U_1\Lambda U_2^T$, an approximate SVD of low rank, the authors approximate \eqref{Eulersoln} by 
\be
\label{Eulersoln2}
\begin{split}
\hat v_{k+1} &= U_1(I_K + \delta t \Lambda)^{-1} U_2^T(\hat v_k -\delta t M(\hat v_k-\tilde f))\\
&=  U_1(I_K + \delta t \Lambda)^{-1} U_2^T(I-\delta t M)\hat v_k + \delta t  U_1(I_K + \delta t \Lambda)^{-1} U_2^T f.
\end{split}
\ee
Therefore, the final approximation for $\mathcal{S}_\tau u$ in \cite{MKB} is computed by setting $\hat v_0 = u$ and $\mathcal{S}_\tau u\approx \hat v_{\tau/\delta t}$.
\subsection{Analysis}
Both \eqref{Eulersoln} and \eqref{Eulersoln2} are of the form 
\[
v_{k+1} = \mathcal{A}v_k + g.
\]
By induction, this has $k^\text{th}$ term 
\be\label{kthterm}
v_k = \mathcal{A}^k v_0 + \sum_{r=0}^{k-1} \mathcal{A}^r g = \mathcal{A}^k v_0 + (\mathcal{A}-I)^{-1}(\mathcal{A}^{k}-I) g. 
\ee
Thus taking $k = \tau/\delta t$ and $v_0 = u$, we get successive approximations 
\begin{align}
\notag &\mathcal{S}_\tau u \\
\label{approx1}&\approx \left[\left(I+\delta t \Delta\right)^{-1}\left(I - \delta t M\right)\right]^k u\\ 
\notag &\quad + \left[\left(I+\delta t \Delta\right)^{-1}\left(I - \delta t  M\right)-I\right]^{-1} \left(\left[\left(I+\delta t \Delta\right)^{-1}\left(I - \delta t M\right)\right]^k - I\right) \delta t \left(I+\delta t \Delta\right)^{-1} f\\
\label{approx2}&\approx \left[  U_1(I_K + \delta t \Lambda)^{-1} U_2^T(I-\delta t M)\right]^k u\\ 
\notag &\quad + \left[U_1(I_K + \delta t \Lambda)^{-1} U_2^T\left(I -\delta t  M\right)-I\right]^{-1} \left(\left[U_1(I_K + \delta t \Lambda)^{-1} U_2^T\left(I - \delta t M\right)\right]^k - I\right) \delta t U_1(I_K + \delta t \Lambda)^{-1} U_2^Tf.
\end{align}
To see what these approximations are doing, note that 
\[
I + \delta t X = e^{\delta t X} +\bigO(\delta t^{2})
\]
and note the Lie product formula \cite[Theorem 2.11]{Hall}
\begin{align*}
&e^{(X + Y)/k} = e^{X/k}e^{Y/k} +\bigO(k^{-2}) &\text{and therefore}& &e^{X + Y} = \left(e^{X/k}e^{Y/k}\right)^k +\bigO(k^{-1}). 
\end{align*}
Then, since $k\delta t = \tau$,
\be \label{LPFapprox}
\begin{split}
\left[\left(I+\delta t \Delta\right)^{-1}\left(I - \delta t  M\right)\right]^k &= \left[e^{-\delta t\Delta}e^{-\delta t M} +\bigO(\delta t^2)\right]^k \\
&= \left[e^{-\delta t\Delta}e^{-\delta t M}\right]^k +\bigO(k\delta t^2)\\
&= \left(e^{-\tau(\Delta +M)} + \bigO(k\delta t^2) \right) +\bigO(k\delta t^2) \\&= e^{-\tau A} + \bigO(\delta t)
\end{split}
\ee
and the second term in \eqref{approx1} becomes 
\begin{align*}
&\left[I-\left(I+\delta t \Delta\right)^{-1}\left(I - \delta t  M\right)\right]^{-1} \left(I-e^{-\tau A} + \bigO(\delta t) \right) \delta t \left(I+\delta t \Delta\right)^{-1}\\
&= \left( \Delta + M\right)^{-1}(I+\delta t \Delta) \left(I-e^{-\tau A} + \bigO(\delta t) \right)\left(I+\delta t \Delta\right)^{-1} \\
&=A^{-1}(I-e^{-\tau A}) + E + \bigO(\delta t)
\end{align*}
where (writing $[X,Y]:=XY - YX$ for the commutator of $X$ and $Y$)
\begin{align*}E&:= A^{-1}(I+\delta t \Delta) \left [I-e^{-\tau A},(I+\delta t \Delta)^{-1}\right] \\
&= -A^{-1}(I+\delta t \Delta) \left [e^{-\tau A},(I+\delta t \Delta)^{-1}\right] \\
&=A^{-1} \left [e^{-\tau A},(I+\delta t \Delta)\right](I+\delta t \Delta)^{-1}\\
&=\delta t A^{-1} \left [e^{-\tau A},  \Delta\right](I+\delta t \Delta)^{-1} = \bigO(\delta t)
\end{align*}
 is the commutation error. Hence the overall error for $\eqref{approx1}$ is $\bigO(\delta t)$. The error for \eqref{approx2} is similar but also contains an extra error from the spectrum truncation. 
 
We can also relate this Euler method to a modified quadrature rule. It is easy to see from \eqref{fdiffusesoln} that 
\[
S_\tau u = e^{-\tau A} u + \int_0^\tau e^{-tA}f \; dt.
\]
We understand the Euler approximation for the $e^{-\tau A}u$ term by \eqref{LPFapprox}. By \eqref{kthterm}, we can write the Euler approximation for the integral term as 
\begin{align}
    \label{quad1} \int_0^\tau e^{-tA}f \; dt &\approx \delta t \sum_{r=0}^{k-1} \left[\left(I+\delta t \Delta\right)^{-1}\left(I - \delta t  M\right)\right]^r\left(I+\delta t \Delta\right)^{-1} f \\
    \label{quad2} &\approx \delta t \sum_{r=0}^{k-1} \left[  U_1(I_K + \delta t \Lambda)^{-1} U_2^T(I-\delta t M)\right]^rU_1(I_K + \delta t \Lambda)^{-1} U_2^Tf.
\end{align}
We note that, since $M=\operatorname{diag}(\mu)$ (and assuming that $\delta t ||\mu||_\infty <1 $), 
\[(I-\delta t M)^{-1} = \operatorname{diag}\left((\mathbf{1}-\delta t\mu)^{-1}\right) \]
applying the reciprocation elementwise. Therefore, we can rewrite \eqref{quad1} as 
\begin{align*}
b:=\int_0^\tau e^{-tA}f \; dt &\approx \delta t \sum_{r=0}^{k-1} \left[\left(I+\delta t \Delta\right)^{-1}\left(I - \delta t  M\right)\right]^{r+1} \left((\mathbf{1}-\delta t\mu)^{-1}\odot f\right) & \\
&= \delta t \sum_{r=1}^{k} \left(e^{-r\delta t A}\left((\mathbf{1}-\delta t\mu)^{-1}\odot f\right)  + \bigO(r\delta t^2)\right) &\text{by \eqref{LPFapprox} and relabelling $r$}\\
&= \left(\delta t\sum_{r=1}^{k} e^{-r\delta t A}\left((\mathbf{1}-\delta t\mu)^{-1}\odot f\right)  \right)+ \bigO(\tau^2\delta t) &
\end{align*}
recalling that $k\delta t = \tau$. This can be seen to be a quadrature by the right-hand rule of the integral 
\[
\int_0^\tau e^{-tA} \left((\mathbf{1}-\delta t\mu)^{-1}\odot f\right) \; dt.
\]
Likewise, we can rewrite \eqref{quad2} as 
\[
\int_0^\tau e^{-tA}f \; dt \approx \delta t \sum_{r=1}^{k} \left[ U_1(I_K + \delta t \Lambda)^{-1} U_2^T(I-\mu\delta t P_{Z})\right]^{r} \left((\mathbf{1}-\delta t\mu)^{-1}\odot f\right) 
\]
where going from \eqref{quad1} to \eqref{quad2} has incurred an extra error from the spectrum truncation alongside the quadrature and Lie product formula errors. 

\subsection{Conclusions from analysis}

The key takeaway from these calculations (besides the verification that we have the usual $\bigO(\delta t)$ Euler error) concerns \eqref{LPFapprox}. That equation shows that the Euler approximation for the $e^{-\tau A}$ term is in fact an approximation of a Lie product formula approximation for $e^{-\tau A}$. This motivates our method of cutting out the middleman and using a matrix exponential formula directly, and furthermore motivates our replacement of the linear error Lie product formula with the quadratic error Strang formula. 

We have also shown how the Euler method approximation for $b$ can be written as a form of quadrature, motivating our investigation of other quadrature methods as potential improvements for computing $b$.

\section*{Acknowledgements}

We thank Andrea Bertozzi, Arjuna Flenner, and Arieh Iserles for very helpful comments. This work dovetails with ongoing work of the authors joint with Carola-Bibiane Schönlieb and Simone Parisotto, whom we therefore also thank for many indirect contributions.

The work of the first and second authors was supported by the European Union's Horizon 2020 research and innovation program under Marie Skłodowska-Curie grant 777826. The work of the third author was supported by the EPSRC grant EP/S026045/1.

\end{document}